\newtheorem{thm}{Theorem}[section]
\newtheorem{lemma}[thm]{Lemma}
\newtheorem{prop}[thm]{Proposition}
\theoremstyle{definition}
\newtheorem{definition}[thm]{Definition}
\newtheorem{assumption}[thm]{Assumption}
\theoremstyle{remark}
\newcommand{\thmref}[1]{Theorem~\ref{#1}}
\newcommand{\secref}[1]{\S\ref{#1}}
\newcommand{\lemref}[1]{Lemma~\ref{#1}} 
\newcommand{\propref}[1]{Proposition~\ref{#1}}
\newcommand{\assref}[1]{Assumption~\ref{#1}}
\newcommand*{\dis}{\displaystyle}
\newcommand*{\qq}{\qquad}
\newcommand*{\tx}[1]{\text{#1}}
\newcommand*{\ep}{\epsilon}
\newcommand*{\suchthat}{\, \middle| \,}
\newcommand*{\myoverline}[3]{\mkern -#1mu\overline{\mkern#1mu#3\mkern#2mu}\mkern -#2mu}	
\newcommand*{\zbar}{\myoverline{-2}{0}{\z}}
\newcommand*{\Omegabar}{\myoverline{0}{0}{\Omega} }
\newcommand*{\Ubar}{\myoverline{0}{0}{\U}}
\newcommand*{\Psizbar}{\myoverline{0}{0}{\Psi_z}}
\newcommand*{\Phibar}{\myoverline{0}{0}{\Phi}}
\newcommand*{\Phizbar}{\myoverline{0}{0}{\Phi_z}}
\newcommand*{\sbar}{\myoverline{0}{0}{s}}
\newcommand*{\ybar}{\myoverline{0}{0}{y}}
\newcommand*{\fbar}{\myoverline{0}{0}{f}}
\newcommand*{\Ybar}{\myoverline{0}{0}{Y}}
\newcommand*{\Dspbar}{\myoverline{0}{0}{\Dsp}}
\newcommand*{\half}{\frac{1}{2}}
\newcommand*{\Rsp}{\mathbb{R}}
\newcommand*{\Csp}{\mathbb{C}}
\newcommand*{\Nsp}{\mathbb{N}}
\newcommand*{\Zsp}{\mathbb{Z}}
\newcommand*{\Dsp}{\mathbb{D}}
\newcommand*{\Sone}{\mathbb{S}^1}
\newcommand*{\Lone}{L^1}
\newcommand*{\Ltwo}{L^2}
\newcommand*{\Linfty}{L^{\infty}}
\newcommand*{\Hhalf}{\dot{H}^\half}
\newcommand*{\ap}{{\alpha}}
\newcommand*{\bp}{{\beta}}
\DeclareMathOperator*{\supp}{supp}
\newcommand*{\diff}{\mathop{}\! d}
\newcommand*{\compose}[1]{\circ{#1}}
\newcommand*{\conv}{*}
\newcommand*{\Hil}{\mathbb{H}}
\newcommand*{\Imag}{\tx{Im}}
\newcommand*{\Real}{\tx{Re}}
\newcommand*{\Av}{Av}
\newcommand*{\Avg}{\Av}
\newcommand*{\sgn}{sgn}
\newcommand*{\grad}{\nabla}
\newcommand*{\pt}{\partial_t}
\newcommand*{\ps}{\partial_s}
\newcommand*{\pap}{\partial_\ap}
\newcommand*{\papabs}{\abs{\pap}}
\newcommand*{\psabs}{\abs{\ps}}
\newcommand*{\Gcal}{\mathcal{G}}
\newcommand*{\btil}{\widetilde{b}}
\newcommand*{\h}{h}
\newcommand*{\U}{U}
\newcommand*{\util}{\widetilde{u}}
\newcommand*{\w}{\omega}
\newcommand*{\wtil}{\widetilde{\omega}}
\newcommand*{\F}{F}
\newcommand*{\Phiz}{\Phi_z}
\newcommand*{\z}{z}
\newcommand*{\zone}{z_1}
\newcommand*{\ztwo}{z_2}
\newcommand*{\Z}{Z}
\newcommand*{\Zap}{\Z_{,\ap}}
\newcommand*{\nobrac}[1]{ #1 }
\DeclarePairedDelimiter{\oldbrac}{\lparen}{\rparen}			
\NewDocumentCommand{\brac}{ s o m }{						
	\IfBooleanT{#1}{
  		\IfValueT{#2}{\oldbrac[#2]{#3}}
		\IfValueF{#2}{\oldbrac{#3}} 
	}
	\IfBooleanF{#1}{
  		\IfValueT{#2}{\PackageError{mypackage}{Incorrect use of brac. Insert star}{}}
		\IfValueF{#2}{\oldbrac*{#3}} 
	}		
}
\DeclarePairedDelimiter\oldcbrac{\lbrace}{\rbrace}				
\NewDocumentCommand{\cbrac}{ s o m }{					
	\IfBooleanT{#1}{
  		\IfValueT{#2}{\oldcbrac[#2]{#3}}
		\IfValueF{#2}{\oldcbrac{#3}} 
	}
	\IfBooleanF{#1}{
  		\IfValueT{#2}{\PackageError{mypackage}{Incorrect use of cbrac. Insert star}{}}
		\IfValueF{#2}{\oldcbrac*{#3}} 
	}		
}
\DeclarePairedDelimiter\oldsqbrac{\lbrack}{\rbrack}				
\NewDocumentCommand{\sqbrac}{ s o m }{					
	\IfBooleanT{#1}{
  		\IfValueT{#2}{\oldsqbrac[#2]{#3}}
		\IfValueF{#2}{\oldsqbrac{#3}} 
	}
	\IfBooleanF{#1}{
  		\IfValueT{#2}{\PackageError{mypackage}{Incorrect use of sqbrac. Insert star}{}}
		\IfValueF{#2}{\oldsqbrac*{#3}} 
	}		
}
\DeclarePairedDelimiter{\oldabs}{\lvert}{\rvert}
\NewDocumentCommand{\abs}{ s o m }{						
	\IfBooleanT{#1}{
  		\IfValueT{#2}{\oldabs[#2]{#3}}
		\IfValueF{#2}{\oldabs{#3}} 
	}
	\IfBooleanF{#1}{
  		\IfValueT{#2}{\PackageError{mypackage}{Incorrect use of abs. Insert star}{}}
		\IfValueF{#2}{\oldabs*{#3}} 
	}		
}
\DeclarePairedDelimiterX{\oldnorm}[1]{\lVert}{\rVert}{#1}
\NewDocumentCommand{\norm}{ s o o m }{					
	\IfValueT{#2} {
		\IfBooleanT{#1}{
  			\IfValueT{#3}{\oldnorm[#2]{#4}_{#3}}
			\IfValueF{#3}{\oldnorm{#4}_{#2}} 
		}
		\IfBooleanF{#1}{
  			\IfValueT{#3}{\PackageError{mypackage}{Incorrect use of norm. Insert star}{}}
			\IfValueF{#3}{\oldnorm*{#4}_{#2}} 
		}
	}
	\IfValueF{#2} {
		\IfBooleanT{#1}{\oldnorm{#4}}	
		\IfBooleanF{#1}{\oldnorm*{#4}}		
	}	
}
\def\black@#1{%
    \noalign{%
        \ifdim#1>\displaywidth
            \dimen@\prevdepth
            \nointerlineskip
            \vskip-\ht\strutbox@
            \vskip-\dp\strutbox@
            \vbox{\noindent\hbox to \displaywidth{\hbox to#1{\strut@\hfill}}}%
            \prevdepth\dimen@
        \fi
    }%
}
\renewcommand{\tocsection}[3]{%
  \indentlabel{\@ifnotempty{#2}{\bfseries\ignorespaces#1 #2\quad}}\bfseries#3}
\renewcommand{\tocsubsection}[3]{%
  \indentlabel{\@ifnotempty{#2}{\ignorespaces#1 #2\quad}}#3}
\newcommand\@dotsep{4.5}
\def\@tocline#1#2#3#4#5#6#7{\relax
  \ifnum #1>\c@tocdepth 
  \else
    \par \addpenalty\@secpenalty\addvspace{#2}%
    \begingroup \hyphenpenalty\@M
    \@ifempty{#4}{%
      \@tempdima\csname r@tocindent\number#1\endcsname\relax
    }{%
      \@tempdima#4\relax
    }%
    \parindent\z@ \leftskip#3\relax \advance\leftskip\@tempdima\relax
    \rightskip\@pnumwidth plus1em \parfillskip-\@pnumwidth
    #5\leavevmode\hskip-\@tempdima{#6}\nobreak
    \leaders\hbox{$\m@th\mkern \@dotsep mu\hbox{.}\mkern \@dotsep mu$}\hfill
    \nobreak
    \hbox to\@pnumwidth{\@tocpagenum{\ifnum#1=1\bfseries\fi#7}}\par
    \nobreak
    \endgroup
  \fi}
\renewcommand\csname r@tocindent0\endcsname{0pt}
\def\l@subsection{\@tocline{2}{0pt}{2.5pc}{5pc}{}}
 \def\@testdef #1#2#3{%
   \def\reserved@a{#3}\expandafter \ifx \csname #1@#2\endcsname
  \reserved@a  \else
 \typeout{^^Jlabel #2 changed:^^J%
 \meaning\reserved@a^^J%
 \expandafter\meaning\csname #1@#2\endcsname^^J}%
 \@tempswatrue \fi}
\newcommand*{\rom}[1]{\expandafter\@slowromancap\romannumeral #1@}
\patchcmd{\@sect}{\@addpunct.}{}{}{}
\patchcmd{\subsection}{-.5em}{1em}{}{}
   \def\MR#1{}
\begin{document}

\title[2D Euler uniqueness]{Uniqueness of the 2D Euler equation on rough domains}
\author[Siddhant Agrawal]{Siddhant Agrawal$^1$}
\address{$^1$ 
Instituto de Ciencias Matem\'aticas, ICMAT, Madrid, Spain}
\email{siddhant.govardhan@icmat.es}

\author[Andrea R. Nahmod]{Andrea R. Nahmod$^2$}
\address{$^2$ 
Department of Mathematics,  University of Massachusetts,  Amherst MA 01003}
\email{nahmod@math.umass.edu}


\begin{abstract}
We consider the 2D incompressible Euler equation on a bounded simply connected domain $\Omega$. We give sufficient conditions on the domain $\Omega$ so that for all initial vorticity $\omega_0 \in L^{\infty}(\Omega)$ the weak solutions are unique. Our sufficient condition is slightly more general than the condition that $\Omega$ is a $C^{1,\alpha}$ domain for some $\alpha>0$, with its boundary belonging to $H^{3/2}(\mathbb{S}^1)$. As a corollary we prove uniqueness for $C^{1,\alpha}$ domains for $\alpha >1/2$ and for convex domains which are also $C^{1,\alpha}$ domains for some $\alpha >0$. Previously uniqueness for general initial vorticity in  $L^{\infty}(\Omega)$ was only known for $C^{1,1}$ domains with possibly a finite number of acute angled corners. The fundamental barrier to proving uniqueness below the $C^{1,1}$ regularity is the fact that for less regular domains, the velocity near the boundary is no longer log-Lipschitz. We overcome this barrier by defining a new change of variable which we then use to define a novel energy functional. 
\end{abstract}

\subjclass[2020]{35Q31, 76B03. Non smooth domain, fluid mechanics, 2D Euler equation}

\maketitle
\tableofcontents
\section{Introduction}

Let $\Omega \subset \Rsp^2$ be a bounded simply connected domain whose boundary is a Jordan curve. The 2D incompressible Euler equation on $\Omega$ is given by
\begin{align}\label{eq:Euler}
\begin{split}
u_t + (u\cdot\grad)u = -\grad P \quad \tx{ in } \Omega \\
\grad\cdot u = 0 \quad \tx{ in } \Omega \\
u\cdot n = 0 \quad \tx{ on } \partial \Omega
\end{split}
\end{align}
Here $u$ is the velocity, $P$ is the pressure and $n$ is the outward unit normal. The vorticity is $\omega = \grad \times u = \partial_{x_1}u_2 - \partial_{x_2}u_1$ and satisfies the transport equation
\begin{align}\label{eq:vorticity}
\omega_t + u\cdot\grad \omega = 0 \quad \tx{ in } \Omega
\end{align}
One can recover the velocity from the vorticity by the Biot-Savart law $u = \grad^\perp \Delta^{-1} \w$ where $\Delta$ is the Dirichlet Laplacian and $\grad^\perp  =  (-\partial_{x_2}, \partial_{x_1})$. The 2D Euler equation has several conserved quantities, chief among them are $\norm[L^p(\Omega)]{\w(\cdot,t)}$ for any $1\leq p\leq \infty$. The fact that $\norm[L^p(\Omega)]{\w(\cdot,t)}$ is conserved follows from the measure preserving nature of the flow map induced by the divergence free vector field $u$ and is used in an essential way to prove global existence results. In this paper the measure preserving nature of the flow map plays an important role in the proof of our uniqueness results.

The study of the well-posedness problem for the 2D Euler equation has a long history. There are two important considerations to keep in mind while talking about the well-posedness problem: one is the regularity of the initial vorticity and the other is the regularity of the boundary. Let us first consider the case of both the vorticity and boundary being regular enough. Global well-posedness of strong solutions in smooth domains was proved by Wolibner \cite{Wo33} and H\"older \cite{Ho33} (see also \cite{Mc67,Ki83}). One of the most important works in the well-posedness theory is the work of Yudovich \cite{Yu63} who established global well-posedness for weak solutions on smooth domains for initial data $\w_0 \in \Lone(\Omega)\cap\Linfty(\Omega)$ (see also \cite{Ba72,Te75}). The uniqueness result of Yudovich used the Eulerian formulation and relied on the Calder\'on Zygmund inequalities,
\begin{align}\label{eq:CZineq}
\norm[L^p(\Omega)]{\grad u(\cdot,t)} \leq Cp\norm[L^p(\Omega)]{\w(\cdot,t)} \qq \tx{ for all } p\in [2,\infty)
\end{align}
Later on Marchioro and Pulvirenti \cite{MaPu94} gave a different proof of uniqueness by using the Lagrangian formulation which relied on the log-Lipschitz nature of the velocity
\begin{align}\label{eq:logLipest}
\sup_{x,y \in \Omega} \frac{\abs{u(x,t) - u(y,t)}}{\abs{x-y}\max\cbrac{-\ln\abs{x-y}, 1}} \leq C\norm[\Lone(\Omega)\cap\Linfty(\Omega)]{\w(\cdot,t)}
\end{align}
These estimates hold for $C^{1,1}$ domains but may not hold for less regular domains. For example if the domain is $C^{1 , \alpha}$ for some $0< \alpha < 1$, then in general $\grad u \notin L^p(\Omega)$ for all $p > \frac{2}{1-\alpha}$ (see \lemref{lem:gradunotinLp}). 

For the case of initial vorticity being less regular, global existence of weak solutions was proved by DiPerna and Majda \cite{DiMa87} for $\w_0 \in \Lone(\Rsp^2)\cap L^p(\Rsp^2)$ for $p>1$. 
For $p<\infty$, uniqueness is not expected in general and this is a major open problem (see the works \cite{Vi18a,Vi18b, BrSh21,BrMu20}).

For the case of boundary being less regular, global existence of weak solutions for bounded convex domains was proved by Taylor \cite{Ta00} and for arbitrary simply connected bounded domains (and exterior domains) was proved by Gerard-Varet and Lacave \cite{GeLa13,GeLa15}. These results establish global existence of weak solutions for initial vorticity $\w_0 \in \Lone(\Omega)\cap L^p(\Omega)$ for $1 < p \leq \infty$. However even for $\w_0 \in \Lone(\Omega)\cap \Linfty(\Omega)$ the question of uniqueness is a major open problem. It is important to note that if the domain is less regular, then even if the initial vorticity is assumed to be smooth, the uniqueness question does not become simpler as the regularity of the vorticity can be destroyed at a later time (see \cite{KiZl15, AlCrMa19}).

There have been some recent works that establish uniqueness for rough domains with initial vorticity $\w_0 \in \Lone(\Omega)\cap\Linfty(\Omega)$. One strategy used was to identify domains rougher than $C^{1,1}$ which satisfy either \eqref{eq:CZineq} or \eqref{eq:logLipest} and use this to prove uniqueness. This was first achieved by Bardos, Di Plinio and Temam \cite{BaDiTe13} for rectangular domains and for $C^2$ domains which allow corners of angle $\pi/m$ for $m\in \Nsp, m\geq 2$. Later Lacave, Miot and Wang \cite{LaMiWa14} proved uniqueness for $C^{2,\alpha}$ domains with a finite number of acute angled corners, and then Di Plinio and Temam \cite{DiTe15} proved uniqueness for $C^{1,1}$ domains with finitely many acute angled corners. Note that for angles bigger than $\pi/2$, the estimates \eqref{eq:CZineq} and \eqref{eq:logLipest} fail to hold and uniqueness is open in general. 

Another strategy used to prove uniqueness is to prove it for initial vorticity which is constant around the boundary. The idea behind this strategy is that if the vorticity is constant around the boundary, then the uniqueness proof of Marchioro and Pulvirenti \cite{MaPu94} works, as in this case one only needs the estimate \eqref{eq:logLipest} for $x,y \in K$ where $K\subset \Omega$ is a compact set outside of which the vorticity is constant. The strategy thus reduces to showing that if the vorticity is initially constant around the boundary, then it remains constant for later times. Lacave \cite{La15} proved that if the domain is $C^{1,1}$ with finitely many corners with angles greater than $\pi/2$ and $\w_0$ is constant around the boundary and has a definite sign, then $\w$ remains constant around the boundary for all time and the weak solutions are unique.  Lacave and Zlato\v{s} \cite{LaZl19}  proved the same result removing the restriction of definite sign on $\w_0$ but keeping the initial vorticity constant around the boundary and the corners are now only allowed to be in $(0,\pi)$.  Recently Han and Zlato\v{s} \cite{HaZl21, HaZl22} generalized the results of \cite{La15,LaZl19}, by proving uniqueness in more general domains which include $C^{1,\alpha}$ domains and convex domains, but for initial vorticity which is still constant around the boundary.

In \cite{AgNa22} the authors considered a domain with an obtuse angled corner and proved uniqueness under the assumption that the initial vorticity is non-negative and is supported on one side the angle bisector. The main idea behind this result is that the support of the vorticity moves away from the corner for $t>0$ and one can then use a time dependent weight to close the energy estimate. However it is not clear how to remove the restrictions on the support or the sign of the vorticity by this method. 

As the above discussion shows, the fact that the velocity is not log-Lipschitz near the boundary is still a fundamental obstacle to prove uniqueness for domain less regular than $C^{1,1}$ domains and there has been no way till now to overcome this obstacle without making some very stringent assumptions on the initial vorticity. 

In this paper we decisively overcome this barrier and prove uniqueness for general initial vorticity in $\Linfty(\Omega)$ on a large class of domains that are less regular than $C^{1,1}$ domains. We now give sufficient conditions on the domain for uniqueness to hold for all initial vorticity in $\Linfty(\Omega)$. The conditions we assume on the domain are as follows: 

\begin{assumption}\label{ass:main}
Let $\Omega \subset \Rsp^2$ be a bounded simply connected domain whose boundary is a Jordan curve and let $\Psi:\Dsp \to \Omega$ be a Riemann map. We assume the following properties on $\Psi$:
\begin{enumerate}
\item There exists constants $c_0, c_1 >0$ such that for all $z \in \Dsp$ we have $c_0 \leq \abs{\Psi_z(z)} \leq c_1$. 
\item Let $\Gcal: [0,1) \times \Sone \to \Csp$ be given by $\Gcal(r, e^{i\theta}) = \Psi_z(re^{i\theta})$. There exists $c_2> 0$ so that 
\begin{align*}
\sup_{r \in [0,1)} \norm[\Hhalf(\Sone)]{\Gcal(r, \cdot)} \leq c_2
\end{align*}
\item Let $\Hil$ be the Hilbert transform on $\Sone$ (see \eqref{def:Hil}). There exists $c_3>0$ so that
\begin{align*}
\sup_{r \in [0,1)} \norm[\Linfty(\Sone)]{\Hil(\abs{\Gcal}^2(r, \cdot))} \leq c_3
\end{align*}
\end{enumerate}
\end{assumption}
The first condition says that the Riemann map is bi-Lipschitz. The second condition says that the boundary of the domain in conformal coordinates is in $H^{3/2}(\Sone)$. The third condition is a technical condition which essentially says that the boundary of the domain in conformal coordinates is slightly better than Lipschitz. \assref{ass:main} is satisfied by a large class of domains as \propref{prop:main} below shows. We can now state our main results. 

\begin{thm}\label{thm:main}
Consider the Euler equation \eqref{eq:Euler} in a domain $\Omega \subset \Rsp^2$ satisfying \assref{ass:main}. Then for any initial vorticity $\w_0 \in \Linfty(\Omega)$ there exists a unique Yudovich weak solution in the time interval $[0,\infty)$ with this initial data. 
\end{thm}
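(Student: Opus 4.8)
The strategy is to work in conformal coordinates. Let $\Psi:\Dsp\to\Omega$ be the Riemann map from \assref{ass:main}, and pull the Euler equation back to the disk $\Dsp$. Under the change of variables $x=\Psi(z)$, the vorticity $\w$ transported by $u$ on $\Omega$ becomes a function $\wtil$ on $\Dsp$ transported by a velocity field $\util$ which is divergence free with respect to the weighted measure $\abs{\Psi_z}^2\,dz$; crucially, the stream function satisfies $\Delta\psi=\abs{\Psi_z}^2\,\wtil$ on the flat disk with zero boundary data, so the singular-integral structure of Biot--Savart is the standard one on $\Dsp$ up to the bounded-above-and-below weight $\abs{\Psi_z}^2$ (Condition (1)). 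Existence of a Yudovich solution is already known from \cite{GeLa13,Ta00} under far weaker hypotheses, so the entire content of \thmref{thm:main} is the uniqueness statement: given two weak solutions $\w^1,\w^2$ with the same initial data $\w_0\in\Linfty(\Omega)$, show they coincide.

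**The energy functional.** Following the Lagrangian/Loeper-type philosophy, I would introduce the flow maps $X^1,X^2$ of $\util^1,\util^2$ on $\Dsp$ and control a quantity measuring their discrepancy. The naive choice $\int_\Dsp \abs{X^1-X^2}^2$ fails exactly because $\util$ is not log-Lipschitz near $\partial\Dsp$ when the domain is rougher than $C^{1,1}$. The paper's stated innovation is "a new change of variable which we then use to define a novel energy functional," so the plan is: introduce an auxiliary change of variable (presumably reparametrizing the radial/boundary direction to compensate for the loss of log-Lipschitz regularity, using precisely Conditions (2) and (3) — the $H^{3/2}$ boundary regularity and the $\Linfty$ bound on $\Hil(\abs{\Gcal}^2)$), and define an energy $E(t)$ measuring the distance between the two flows in these new coordinates. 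One then differentiates $E(t)$ in time, uses the transport equations, and splits the resulting expression into a "good" commutator-type term controlled by the velocity difference and a term controlled by a modulus of continuity of each individual velocity field. The measure-preserving property of the flow maps (emphasized in the introduction) is what keeps $\norm[\Linfty]{\wtil^i}$ constant and is used to bound the rearrangement-type terms.

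**Closing the estimate.** The goal is a differential inequality of the form $E'(t)\le C\,\mu(E(t))$ where $\mu$ is an Osgood modulus of continuity (e.g. $\mu(s)=s\max\{-\ln s,1\}$ or a suitable analogue adapted to the new variable) with $\int_0^1 ds/\mu(s)=\infty$; since $E(0)=0$, Osgood's lemma forces $E(t)\equiv0$, hence $X^1=X^2$, hence $\w^1=\w^2$. The key analytic estimate feeding into this is a "log-Lipschitz-in-the-new-coordinate" bound for $\util$: in the original coordinates the velocity degrades near $\partial\Omega$, but after the change of variable the weight $\abs{\Psi_z}^2$ and its harmonic conjugate (whose boundedness is Condition (3)) should restore an Osgood-type modulus. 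Deriving this velocity estimate — tracking how the Biot--Savart kernel on $\Dsp$ interacts with the weight $\abs{\Psi_z}^2\in \Hhalf$ and with the auxiliary change of variable, uniformly up to the boundary — is where I expect the real work to be, and is presumably the heart of the paper; the rest (setting up weak solutions, passing between Eulerian and Lagrangian pictures, verifying the flow maps are well-defined and measure preserving, and running Osgood) is comparatively standard once that estimate is in hand.

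**Main obstacle.** The crux, and the step I expect to be hardest, is constructing the right change of variable and proving the corresponding velocity regularity estimate: one must show that Conditions (1)--(3) of \assref{ass:main} are exactly enough to convert the non-log-Lipschitz velocity near the boundary into something with an integrable (Osgood) modulus of continuity in the new coordinate, and that the time-derivative of the energy genuinely closes — in particular that the error terms generated by the change of variable itself (commutators between the new coordinate map and the transport operator, and the fact that the new coordinate depends on the solution or at least on the domain geometry in a borderline-regular way) are all absorbable. Getting the functional spaces and the weight bookkeeping to line up so that $H^{3/2}(\Sone)$ boundary regularity is the sharp threshold is the delicate part.
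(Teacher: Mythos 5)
Your outline reproduces the architecture already announced in the paper's introduction --- conformal coordinates, flow maps on $\Dsp$, a ``new change of variable,'' an Osgood-type energy, the measure-preserving property --- but it stops exactly where the proof of \thmref{thm:main} actually begins: you never construct the change of variable, never state what properties it must have, and never prove any estimate for it, and you yourself flag this step as ``presumably the heart of the paper.'' That missing construction is the entire content of the theorem beyond standard Yudovich/Marchioro--Pulvirenti machinery. In the paper the map is explicit: $F(re^{i\theta})=re^{iL(r,\theta)}$ with $\partial_\theta L(r,\theta)=G(r,\theta)/c(r)$ and $G=\abs{\Psi_z}^2$ near $\Sone$ (see \eqref{def:L}--\eqref{def:F}), and the three conditions of \assref{ass:main} are used, via the harmonic-conjugate identities \eqref{eq:Hila}, the representation of \lemref{lem:Hhalfformula}, and condition (3), to prove that $F$ is bi-Lipschitz (\lemref{lem:Fbilipschitz}) and that $\abs{\Psi_z}^{-2}F_\theta/\abs{z}$ is Lipschitz while $\abs{\Psi_z}^{-2}F_r$ is Lipschitz with constant $\lesssim (1-r)^{-1}$ (\lemref{lem:FrFthetaestimates}). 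None of this appears, even in outline, in your plan.

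Moreover, where your sketch does commit to specifics it points in a direction that would not close. You propose to reparametrize the ``radial/boundary direction'' and hope for a log-Lipschitz bound on the full velocity in the new coordinate; but $\abs{\Psi_z}^{-2}$ is genuinely only H\"older in the radial direction near $\Sone$, and no change of variable renders the whole field log-Lipschitz there. The paper's mechanism is different: the reparametrization is \emph{angular}, chosen so that in $\frac{\diff}{\diff t}F(Y(y,t))$ the factor $\abs{\Psi_z(Y)}^{-2}$ is exactly cancelled against $\partial_\theta F(Y)$ in the tangential component, while the radial component --- whose coefficient is only $O((1-r)^{-1})$-Lipschitz --- is rescued by the tangency of the flow at the boundary, i.e. $\Real(\btil\,\zbar)=0$ on $\Sone$, which gives $\abs{\Real(\btil(z)\,\zbar/\abs{z})}\lesssim\phi(1-\abs{z})$ (\lemref{lem:btil} part (4)) and closes the estimate after the two-case comparison of $\abs{Y_1-Y_2}$ with $1-\abs{Y_1}$. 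Note also that $F$ depends only on the domain, not on the solution, so the ``commutators between the new coordinate map and the transport operator'' you worry about do not arise; and the energy actually used is the $L^1$ quantity \eqref{def:Energy}, closed by Jensen's inequality and the concavity of $\phi$, rather than an $L^2$ quantity. As written, your proposal is a plausible plan whose decisive steps --- the construction of $F$ and the estimates of \lemref{lem:Fbilipschitz}, \lemref{lem:FrFthetaestimates}, and \lemref{lem:btil} parts (3)--(4) --- are absent, so it does not constitute a proof of \thmref{thm:main}.
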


The definition of Yudovich weak solutions is given in section \secref{sec:weak}. 

\begin{prop}\label{prop:main}
Let $\Omega \subset \Rsp^2$ be a bounded simply connected domain whose boundary is a Jordan curve. In addition assume that $\Omega$ satisfies any one of the following conditions:
\begin{enumerate}
\item $\Omega$ is a $C^{1,\alpha}$ domain for some $\alpha >0$ whose boundary admits a $C^1$ parametrization $f: \Sone \to \partial \Omega$  satisfying $\abs{f'} \neq 0$ on $\Sone$ and $f \in H^{3/2}(\Sone)$. 
\item $\Omega$ is a $C^{1,\alpha}$ domain for some $\alpha  > \half$.
\item $\Omega$ is a $C^{1,\alpha}$ domain for some $\alpha >0$ and $\Omega$ is convex. 
\end{enumerate}
Then $\Omega$ satisfies \assref{ass:main} and therefore Yudovich weak solutions are unique. 
\end{prop}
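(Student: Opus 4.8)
The plan is as follows. Since the ``therefore'' clause is immediate from \thmref{thm:main}, the task is to verify that each of $(1)$--$(3)$ forces \assref{ass:main}. In all three cases $\Omega$ is a $C^{1,\alpha}$ domain, so by the Kellogg--Warschawski theorem the Riemann map extends to $\Psi\in C^{1,\alpha}(\overline{\Dsp})$ with $\Psiz=\Psi'$ \emph{non-vanishing} on $\overline{\Dsp}$. This already gives condition $(1)$ with $c_0=\min_{\overline{\Dsp}}\abs{\Psiz}>0$, $c_1=\max_{\overline{\Dsp}}\abs{\Psiz}$; and it gives condition $(3)$, since $\abs{\Psiz}^2\in C^{\alpha}(\overline{\Dsp})$ forces $\abs{\Gcal}^2(r,\cdot)$ to be bounded in $C^{\alpha}(\Sone)$ uniformly in $r$, and by Privalov's conjugate function theorem $\Hil$ is bounded $C^{\alpha}(\Sone)\to C^{\alpha}(\Sone)\subset\Linfty(\Sone)$, so $\sup_r\norm[\Linfty(\Sone)]{\Hil(\abs{\Gcal}^2(r,\cdot))}<\infty$. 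Thus everything reduces to condition $(2)$; and since for the holomorphic $\Psiz$ the quantity $\norm[\Hhalf(\Sone)]{\Psiz(r\,\cdot)}^2=\sum_{n\ge1}n\,\abs{\widehat{\Psiz}(n)}^2 r^{2n}$ is non-decreasing in $r$ with limit $\norm[\Hhalf(\Sone)]{\Psiz\,\vert_{\Sone}}^2$, condition $(2)$ is equivalent to $\Psiz\,\vert_{\Sone}\in\Hhalf(\Sone)$, i.e.\ to the conformal parametrization $\sigma:=\Psi\,\vert_{\Sone}$ lying in $H^{3/2}(\Sone)$.

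The crux is to prove this under hypothesis $(1)$. There $f\in C^1(\Sone)\cap H^{3/2}(\Sone)$ with $\abs{f'}>0$, so $f'\in\Hhalf(\Sone)\cap\Linfty(\Sone)$ and $\abs{f'}$ is bounded below. Since $\sigma$ is $C^1$ (as $\Psi\in C^1(\overline{\Dsp})$) with $\sigma'(\theta)=ie^{i\theta}\Psiz(e^{i\theta})$ non-vanishing, writing $\sigma=f\circ\mu$ the reparametrization $\mu$ of $\Sone$ has continuous non-vanishing derivative $\mu'=\sigma'/(f'\circ\mu)$, hence is \emph{bi-Lipschitz} --- and this is precisely the source of difficulty, since bi-Lipschitz regularity is far too weak to transport $H^{3/2}$ across. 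The way around it is to separate modulus from direction. As $\Hhalf(\Sone)\cap\Linfty(\Sone)$ is an algebra, $\abs{f'}^2\in\Hhalf(\Sone)$, hence $\abs{f'}=(\abs{f'}^2)^{1/2}\in\Hhalf(\Sone)\cap\Linfty(\Sone)$ (composition with a smooth function), and so the unit tangent $\tau_f:=f'/\abs{f'}\in\Hhalf(\Sone)\cap\Linfty(\Sone)$. Now $\tau_\sigma=\tau_f\circ\mu$ (up to a sign), where $\tau_\sigma:=\sigma'/\abs{\sigma'}$ is unimodular, and the Gagliardo $\Hhalf(\Sone)$ seminorm is comparable under bi-Lipschitz changes of variable; therefore $\tau_\sigma\in\Hhalf(\Sone)\cap\Linfty(\Sone)$.

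To recover the modulus I would use holomorphy. As $\Psiz$ is non-vanishing on the simply connected $\Dsp$, $\log\Psiz$ is holomorphic and lies in $C^{\alpha}(\overline{\Dsp})$, so its trace is $\log\abs{\Psiz}+i\arg\Psiz$ with $\arg\Psiz$ a genuine continuous real function on $\Sone$. From $\tau_\sigma(\theta)=ie^{i\theta}e^{i\arg\Psiz(e^{i\theta})}$, multiplying by the smooth factor $-ie^{-i\theta}$ gives $e^{i\arg\Psiz(e^{i\,\cdot})}\in\Hhalf(\Sone)\cap\Linfty(\Sone)$; being unimodular with continuous argument, a standard localisation then yields $\arg\Psiz\,\vert_{\Sone}\in\Hhalf(\Sone)$. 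Because $\log\abs{\Psiz}$ and $\arg\Psiz$ are harmonic conjugates on $\Dsp$, their traces satisfy $\log\abs{\Psiz}\,\vert_{\Sone}=\pm\,\Hil(\arg\Psiz\,\vert_{\Sone})+\mathrm{const}$, so boundedness of $\Hil$ on $\Hhalf(\Sone)$ gives $\log\abs{\Psiz}\,\vert_{\Sone}\in\Hhalf(\Sone)$, whence (it is bounded and bounded below) $\abs{\Psiz}\,\vert_{\Sone}=\exp(\log\abs{\Psiz}\,\vert_{\Sone})\in\Hhalf(\Sone)\cap\Linfty(\Sone)$. Finally $\Psiz\,\vert_{\Sone}=\abs{\Psiz}\,e^{i\arg\Psiz}$ is a product of two elements of the algebra $\Hhalf(\Sone)\cap\Linfty(\Sone)$, hence lies in $\Hhalf(\Sone)$, which is condition $(2)$. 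The analytic inputs here are all standard: $\Hhalf\cap\Linfty$ is an algebra on $\Sone$, stable under smooth composition and division by elements bounded below; $\Hil$ is bounded on $\Hhalf(\Sone)$; the $\Hhalf$ seminorm is bi-Lipschitz invariant; Privalov; Kellogg--Warschawski.

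For $(2)$ and $(3)$ I would reduce to $(1)$ by exhibiting the admissible $f$, namely the (reparametrised) arclength parametrization $\gamma\colon\Sone\to\partial\Omega$, which for a $C^{1,\alpha}$ domain is $C^{1,\alpha}$ with $\abs{\gamma'}\equiv1$, hence $C^1$ with non-vanishing derivative; only $\gamma\in H^{3/2}(\Sone)$, i.e.\ $\gamma'\in\Hhalf(\Sone)$, remains. In case $(2)$, $\gamma'\in C^{\alpha}(\Sone)$ with $\alpha>\half$, and $C^{\alpha}(\Sone)\subset\Hhalf(\Sone)$ precisely for $\alpha>\half$, since the Gagliardo double integral is bounded by $[\gamma']_{C^\alpha}^2\iint\abs{x-y}^{2\alpha-2}\,dx\,dy<\infty$. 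In case $(3)$, write $\gamma'=e^{i\theta}$ with $\theta$ the tangent angle; convexity forces $\theta$ monotone while $C^{1,\alpha}$ gives $\theta\in C^{\alpha}$, and a monotone $C^{\alpha}$ function lies in $\Hhalf(\Sone)$: one bounds $\iint\frac{\abs{\theta(x)-\theta(y)}^2}{\abs{x-y}^2}\lesssim[\theta]_{C^\alpha}\iint\frac{\abs{\theta(x)-\theta(y)}}{\abs{x-y}^{2-\alpha}}$, and by monotonicity $\int_{\Sone}\abs{\theta(s+h)-\theta(s)}\,ds\asymp\abs{h}$, leaving the convergent $\int\abs{h}^{\alpha-1}\,dh$; then $\abs{e^{i\theta(x)}-e^{i\theta(y)}}\le\abs{\theta(x)-\theta(y)}$ transfers the bound to $\gamma'$. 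In both cases $f=\gamma$ meets the hypotheses of $(1)$, so $\Omega$ satisfies \assref{ass:main} and uniqueness follows from \thmref{thm:main}. The main obstacle throughout is the one flagged above: the bi-Lipschitz-only regularity of the conformal reparametrization, which is circumvented by splitting the tangent into its (bi-Lipschitz-stable) direction and its modulus, the latter recoverable only because $\Psiz$ is non-vanishing and holomorphic so that $\log\abs{\Psiz}$ and $\arg\Psiz$ are harmonic conjugates linked by the Hilbert transform.
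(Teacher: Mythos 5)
Your proposal is correct. For conditions (1) and (3) of \assref{ass:main} and for case (1) of the proposition you follow essentially the paper's own route: Kellogg--Warschawski plus boundedness of $\Hil$ on $C^{\alpha}(\Sone)$, then for case (1) the bi-Lipschitz reparametrization $\mu$ (the paper's $h$), bi-Lipschitz invariance of the Gagliardo $\Hhalf$ seminorm to transport the unit tangent, recovery of $\arg\Psi_z$ from the unimodular factor (your ``standard localisation'' is exactly the paper's \lemref{lem:fefHhalf}), then the conjugate-function relation $\log\lvert\Psi_z\rvert = -\Hil(\arg\Psi_z)+\mathrm{const}$ and exponentiation; writing $\Psi_z=\lvert\Psi_z\rvert e^{i\arg\Psi_z}$ instead of $e^{l+im}$ is only cosmetic. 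Where you genuinely diverge is in cases (2) and (3). The paper handles (2) directly ($\Psi_z\in C^{\alpha}(\Dspbar)$ with $\alpha>\tfrac12$ gives the trace in $\Hhalf$ via \lemref{lem:Hhalfformula}), while you reduce it to case (1) through the arclength parametrization --- same embedding, applied to $\gamma'$ rather than $\Psi_z$, at the cost of invoking the full case (1) machinery. For (3) the paper works on interior circles: convexity of $\Psi(B_r(0))$ (Study's theorem, cited from Nehari) gives $\lVert\partial_\alpha m(r,\cdot)\rVert_{\Lone}\le 4\pi$, which is paired with $\lVert\Hil m(r,\cdot)\rVert_{\Linfty}\lesssim_\Omega 1$ to get $\lVert m(r,\cdot)\rVert_{\Hhalf}^2\lesssim\lVert\Hil m(r,\cdot)\rVert_{\Linfty}\lVert\partial_\alpha m(r,\cdot)\rVert_{\Lone}$, then conjugates and exponentiates at each radius, so the uniform-in-$r$ bound comes out directly; you instead argue purely at the boundary, showing a monotone $C^{\alpha}$ tangent angle lies in $\Hhalf$ via $\iint\frac{\lvert\theta(x)-\theta(y)\rvert^2}{\lvert x-y\rvert^2}\lesssim[\theta]_{C^\alpha}\iint\frac{\lvert\theta(x)-\theta(y)\rvert}{\lvert x-y\rvert^{2-\alpha}}$ together with $\int\lvert\theta(s+h)-\theta(s)\rvert\,ds\asymp\lvert h\rvert$, and then feed $\gamma$ into case (1). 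Your route buys independence from the interior-convexity theorem and the $\Linfty$--$\Lone$ pairing trick, and it makes explicit the boundary-to-interior step (monotonicity of $\sum_{n\ge1}n\lvert\widehat{\Psi_z}(n)\rvert^2r^{2n}$) that the paper leaves implicit in cases (1)--(2); the paper's route is shorter for (2) and avoids the small bookkeeping your (3) needs, namely that $\theta$ must be taken as the continuous lift of the tangent angle along arclength (otherwise the identity $\int\lvert\theta(s+h)-\theta(s)\rvert\,ds\asymp\lvert h\rvert$ and the transfer to $e^{i\theta}$ would be corrupted at the wrap-around), which is the natural reading of your argument but worth stating.
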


\thmref{thm:main} is the first result to prove uniqueness for general initial vorticity in $\Linfty(\Omega)$, where the domain is such that the velocity is not log-Lipschitz near the boundary. For the class of domains considered by \assref{ass:main} which are not $C^{1,1}$ domains, previously uniqueness was only known for initial vorticity which is constant around the boundary (see \cite{HaZl21}). The main contribution of this paper is to remove the assumption that the initial vorticity be constant around the boundary for the class of domains satisfying \assref{ass:main}.

Let us now make a few remarks about \thmref{thm:main} and \propref{prop:main} above. First, the results proved here are for bounded domains but the same result with essentially the same proof works for domains which are homeomorphic to the upper half plane. Also for simplicity we have worked with simply connected domains. However one can prove similar results for domains with a finite number of obstacles, with each obstacle being homeomorphic to the disc (to do this, we can simply define the change of variable in a neighborhood of each connected component of the boundary separately and then patch them together to get a global change of variable. See the discussion regarding the proof strategy below for the nature of the change of variable).

If $\Omega$ is a $C^{1,\alpha} $ domain for some $\alpha>0$, then it will satisfy the first and third conditions of \assref{ass:main}. So for $C^{1,\alpha} $ domains one only needs to check the second condition of \assref{ass:main}, namely that the boundary in conformal coordinates is in $H^{3/2}(\Sone)$. Alternatively as stated in \propref{prop:main}, one could instead assume that there exists a $C^1$ parametrization of the boundary which is in $H^{3/2}(\Sone)$. 

Note that in terms of the scaling of the spaces involved\footnote{This is a heuristic for bounded domains as one does not actually have scaling invariance for bounded domains. This heuristic is rigorous for unbounded domains which are homeomorphic to the upper half plane.} the regularity assumptions on the domain in \assref{ass:main} are of one order lower than the assumption that the domain be a $C^{1,1}$ domain. For example in this paper the boundary is assumed to be in $H^{3/2}(\Sone)$ instead of being in $C^{1,1}(\Sone)$, which shows that the number of derivatives required is reduced by half and the integrability requirement is reduced from $\Linfty(\Sone)$ to $\Ltwo(\Sone)$. It is  important to note however that the above result does not show uniqueness for general $C^{1,\alpha}$ domains for $0<\alpha \leq \half$. This is because in general, a function belonging to $C^{1,\half}(\Sone)$ does not always belong to $H^{3/2}(\Sone)$ (see \cite{MiSi15}). Also note that \assref{ass:main} excludes domains with corners, as the first condition of \assref{ass:main} will be violated, moreover the second condition will also be violated for angles less than $\pi$. 
\medskip\smallskip

\underline{Strategy of the Proof:} First recall that the energy used in the uniqueness proof of Marchioro and Pulvirenti \cite{MaPu94} is 
\begin{align}\label{eq:EnergystandardX}
E_1(t) = \int_\Omega \abs{X_1(x,t) - X_2(x,t)} \diff x
\end{align}
where $X_1(\cdot,t), X_2(\cdot,t) : \Omega \to \Omega$ are the flow maps for the two solutions. One can use this energy to prove uniqueness for $C^{1,1}$ domains but the proof does not go through for less regular domains as the velocity is no longer log-Lipschitz (see \lemref{lem:gradunotinLp}). Consider a conformal map $\Psi:\Dsp \to \Omega$ and let $Y_i(y,t) = \Psi^{-1}(X_i(\Psi(y),t))$ for $i = 1,2$ and $y \in \Dsp$ be the corresponding flow maps in $\Dsp$. One can prove the same uniqueness result for $C^{1,1}$ domains by  using instead the energy
\begin{align}\label{eq:EnergystandardY}
E_2(t) = \int_\Dsp \abs{Y_1(y,t) - Y_2(y,t)} \abs{\Psi_z(y)}^2 \diff y
\end{align}
Here $\abs{\Psi_z(y)}^2 \diff y$ is the pullback of the Lebesgue measure on $\Omega$. Again this energy is not good enough to prove uniqueness for domains less regular than $C^{1,1}$ domains. However it is now easier to see where the main difficulty in proving uniqueness comes from. The differential equation for $Y(y,t)$ is as follows
\begin{align}\label{eq:Yytintro}
\frac{\diff Y(y,t)}{\diff t} = \btil(Y(y,t),t)\abs{\Psi_z(Y(y,t))}^{-2}
\end{align}
where $\btil$ is given by \eqref{eq:btil}. Now $\btil(\cdot,t)$ is log-Lipschitz in $\Dspbar$ from \lemref{lem:btil} part (2). However if the domain is less regular than $C^{1,1}$, then $\abs{\Psi_z}^{-2}$ is no longer log-Lipschitz in $\Dsp$, in fact it is only $C^{\alpha}$ for a $C^{1,\alpha}$ domain (if $0<\alpha < 1$).

To overcome this regularity hurdle we introduce a new energy defined by
\begin{align}\label{eq:Etdefintro}
E(t) = \int_\Dsp \abs{F(Y_1(y,t)) - F(Y_2(y,t)) } \abs{\Psi_z(y)}^2 \diff y
\end{align}
where $F:\Dsp \to \Dsp$ is a carefully constructed diffeomorphism. The map $F$ is the identity map away from the boundary and near the boundary we have $F(re^{i\theta}) = re^{iL(r,\theta)}$ where $\partial_\theta L(r,\theta) = C(r)\abs{\Psi_z(r,\theta)}^{2}$, where $C(r)$ is a normalizing constant. This choice of $F$ allows us to eliminate the factor $\abs{\Psi_z(Y(y,t))}^{-2}$ in \eqref{eq:Yytintro} at least in the $\partial_\theta$ direction when we take the time derivative to $E(t)$. To close the energy estimate and control the $\partial_r$ derivative, we then use the fact that the radial component of $\btil$ decays as $\phi(1 - r)$ as $r \to 1$ (where $\phi$ is given by \eqref{def:phi} below). For the argument to work, we need $F$ to satisfy several properties, the most important being that $F$ needs to be bi-Lipschitz on $\Dsp$. To prove that \assref{ass:main} implies that $F$ is bi-Lipschitz, we crucially use the special structure of the function  $\abs{\Psi_z(r,\theta)}^{2}$, namely that $\Psi_z$ is a holomorphic function.  \assref{ass:main}  is very close to being equivalent to the condition that $F$ is bi-Lipschitz and luckily it turns out that \assref{ass:main} is also sufficient to prove several other estimates for $F$ (such as  \lemref{lem:FrFthetaestimates}). 

For domains satisfying \assref{ass:main}, all the energies defined in \eqref{eq:EnergystandardX}, \eqref{eq:EnergystandardY} and \eqref{eq:Etdefintro} are equivalent i.e. $E_1(t) \approx_\Omega E_2(t) \approx_\Omega E(t)$. However {\it{the time derivative}} of $E(t)$ is much better behaved and this is what is crucially used to prove uniqueness.

The paper is organized as follows: In \secref{sec:notation} we introduce the notation and derive the flow equation. In \secref{sec:weak} we introduce the notion of Yudovich weak solutions and establish some of their basic properties. In \secref{sec:changevar} we define our change of variable and prove important estimates for it. Finally in \secref{sec:energy} we use the energy \eqref{eq:Etdefintro} and the estimates proved for the change of variable in \secref{sec:changevar} to prove \thmref{thm:main} and \propref{prop:main}. The appendix \secref{sec:appendix} contains some basic estimates used throughout the paper.

\medskip
\noindent \textbf{Acknowledgment}: S.A. received funding from the European Research Council (ERC) under the European Union’s Horizon 2020 research and innovation program through the grant agreement 862342. A.N. is funded in part by NSF DMS-2052740, NSF DMS-2101381 and the Simons Foundation Collaboration Grant on Wave Turbulence (Nahmod's award ID 651469).

\section{Notation and Preliminaries}\label{sec:notation}

We will identify $\Rsp^2 \simeq \Csp$ and we denote a ball of radius $r$ by $B_r(z_0) = B(z_0, r) = \cbrac{z\in \Csp \suchthat \abs{z-\z_0} <r}$. Let $\Dsp = B_1(0)$ be the unit disc and let $\Sone = \partial \Dsp$. For $z \in \Csp$ we define $z^*  = z \abs{z}^{-2}$ (here $0^* = \infty$). We define the function $\phi:[0,\infty) \to \Rsp$ as $\phi(0) =0 $ and for $x>0$ as  
\begin{align}\label{def:phi}
\phi(x) = x\max\cbrac{-\ln(x), 1}
\end{align}
Given the local log-Lipschitz nature of the velocity, this function $\phi$ will play an important role in closing the energy estimate in \secref{sec:energy}. Observe that $\phi$ is a continuous increasing function on $[0,\infty)$ with  $x\leq \phi(x) $ for all $x\geq 0$ and that $\phi$ is a concave function on $[0,1/10]$. Also observe that if $c\geq 1$ then $\phi(cx) \leq c\phi(x)$ for all $x\geq 0$. 

We write $a \lesssim b$ if there exists a universal constant $C>0$ so that $a\leq Cb$. We write $a\lesssim_{\eta} b$ if there exists a constant $C = C(\eta)>0$ depending only on $\eta$ so that $a\leq Cb$. Similar definitions for $\lesssim_{\eta_1,\eta_2}$, $\lesssim_{\eta_1,\eta_2, \eta_3}$ etc. We write $a \approx b$ if $a \lesssim b$ and $b\lesssim a$. Similarly we write $a \approx_\eta b$ if $a\lesssim_\eta b$ and $b \lesssim_\eta a$ etc. We will write $a \lesssim_\Omega b$ if the constants depend on the constants in \assref{ass:main}.

To simplify notation, in the following we will identify functions $f: \Sone \to \Csp$ with their pullbacks $\tilde{f}: \Rsp \to \Csp$, where $\tilde{f}(\ap) = f(e^{i\ap})$. Similarly we will identify functions $F: \Dspbar \to \Csp$ with their pullbacks $\tilde{F}: [0,1] \times \Rsp \to \Csp$ where $\tilde{F}(r,\theta) = F(re^{i\theta})$. If $g:\Rsp \to \Csp$ is a $2\pi$ periodic function, then in this paper whenever we use the $L^p$ or Sobolev norms of $g$, what we mean is that we are computing the norms by looking at $g$ as a function on $\Sone$ and not as a function on $\Rsp$. 

We define the Fourier transform for a function $f: \Sone \to \Csp$ as
\begin{align*}
\hat{f}(n) & = \frac{1}{2\pi}\int_0^{2\pi} f(\ap) e^{-in\ap} \diff \ap 
\end{align*}
and the inverse Fourier transform as
\begin{align*}
 f(\ap) & = \sum_{n = -\infty}^{\infty} \hat{f}(n) e^{in\ap}
\end{align*}
The $L^p$ norm of $f$ is defined as 
\begin{align*}
\norm[p]{f} = \brac{\int_0^{2\pi} \abs*{f(\ap)}^p  \diff \ap}^{\frac{1}{p}}
\end{align*}
A Fourier multiplier with symbol $a(\xi)$ is the operator $T_a$ defined formally by the relation $\dis \widehat{T_a{f}} = a(\xi)\hat{f}(\xi)$. For $s \geq 0$ the operators $\papabs^s $ and $\langle\pap\rangle^s$ are defined as the Fourier multipliers with symbols $\abs{\xi}^s$ and $(1 + \abs{\xi}^2)^{\frac{s}{2}}$ respectively. 
The Sobolev space $H^s(\Sone)$ for $s\geq 0$  is the space of functions with  $\norm[H^s]{f} = \norm*[\Ltwo]{\langle\pap\rangle^s f} < \infty$. The homogenous Sobolev space $\Hhalf(\Sone)$ is the space of functions modulo constants with  $\norm[\Hhalf]{f} = \norm*[\Ltwo]{\papabs^\half f} < \infty$. 
Hence in particular we observe that
\begin{align*}
\norm[\Hhalf]{f}^2 = \norm*[\big][2]{\papabs^\half f}^2 = \int_0^{2\pi} \fbar \papabs f \diff \ap
\end{align*}
The Poisson kernel for the disc is given by
\begin{align}\label{eq:Poisson}
P_{r}(\theta) = \frac{1 - r^2}{1 -2r\cos(\theta) + r^2} \qq \tx{ for } 0\leq r < 1
\end{align}
Define the averaging operator
\begin{align*}
\Av(f) & = \frac{1}{2\pi}\int_0^{2\pi} f(\bp) \diff \bp  
\end{align*}
and the Hilbert transform 
\begin{align}\label{def:Hil}
(\Hil f)(\ap) & = \frac{1}{2\pi} p.v. \int_0^{2\pi} \cot\brac{\frac{\ap - \bp}{2}}   f(\bp)\diff \bp
\end{align}
We refer the reader to \cite{MuSc13} for basic properties of the Hilbert transform. Observe that we have 
\begin{align*}
\Av(f) & = \hat{f}(0) \\
\widehat{(\Hil f)}(n)  & = -i \sgn(n) \hat{f}(n) \\
\papabs  & = \Hil \partial_{\alpha} 
\end{align*}
where
\begin{align*}
\sgn(n) = 
\begin{cases}
1 \quad \tx{ if } n \geq 1 \\
0 \quad \tx{ if } n = 0 \\
-1 \quad \tx{ if } n \leq -1
\end{cases}
\end{align*}

Let us now derive the equation of the flow. If the Green's function of the domain $\Omega$ is $G_{\Omega}(x,y)$, then the kernel of the Biot-Savart law is $ K_{\Omega}(x,y) := \grad_x^\perp G_{\Omega}(x,y)$ with $\grad^\perp_x  =  (-\partial_{x_2}, \partial_{x_1})$. Let $\Phi:\Omega \to \Dsp$ be a Riemann map and let $\Psi: \Dsp \to \Omega$ be the inverse of $\Phi$. Fix $\ztwo \in \Omega$ and let $f: \Omega\backslash\cbrac{\ztwo} \to \Csp$ be defined as  
\begin{align}\label{eq:f}
f(\z) =  \frac{1}{2\pi}\ln\brac*[\Bigg]{\frac{\Phi(z) - \Phi(\ztwo) }{(\Phi(z) - \Phi(\ztwo)^*)\Phi(\ztwo) }}
\end{align}
Clearly $f$ is holomorphic and we have for $\zone \in \Omega$, $\zone\neq \ztwo$
\begin{align}\label{def:Greensfunc}
G_{\Omega}(\zone,\ztwo) =  \frac{1}{2\pi}\ln\abs*[\Bigg]{\frac{\Phi(z) - \Phi(\ztwo) }{(\Phi(z) - \Phi(\ztwo)^*)\Phi(\ztwo) }} = \Real\cbrac{f(\zone)}
\end{align}
Hence
\begin{align*}
K_{\Omega}(\zone,\ztwo) & = \Real\begin{pmatrix} -\partial_{x_2} f(\zone) \\ \partial_{x_1} f(\zone) \end{pmatrix} \\
& = \Real(-i f_{x_1}(\zone)) + i \Real(f_{x_1}(z_1)) \\
& = i\overline{f_z}(z_1)
\end{align*}
Then from \eqref{eq:f} we have
\begin{align}\label{eq:Kernel}
K_{\Omega}(\zone,\ztwo) =  \brac{\frac{i}{2\pi}}\Phizbar(\zone)\sqbrac{\frac{1}{\Phibar(\zone) - \Phibar(\ztwo)} - \frac{1}{\Phibar(\zone) - \frac{1}{\Phi(\ztwo)}}}
\end{align}
If $\w(\cdot,t)$ is the vorticity at time $t$, then from the Biot-Savart law we see that 
\begin{align*}
u(z_1,t) = \int_{\Omega} K_{\Omega}(z_1,z_2)\w(z_2,t)\diff z_2
\end{align*}
Now the equation for the flow $X:\Omega\times [0,\infty) \to \Omega$ is given by
\begin{align}\label{eq:ODEXfundamental}
\frac{\diff X(x,t)}{\diff t} & = u(X(x,t),t) = \int_{\Omega} K_{\Omega}(X(x,t),z)\w(z,t) \diff z
\end{align}
with $X(x,0) = x$ for all $x \in \Omega$. Hence we have
\begin{align*}
\frac{\diff X(x,t)}{\diff t} = \brac*[\Big]{\frac{i}{2\pi}}\Phizbar(X(x,t)) \int_{\Omega}\sqbrac{\frac{1}{\Phibar(X(x,t)) - \Phibar(z)} - \frac{1}{\Phibar(X(x,t)) - \frac{1}{\Phi(z)}}}\w(z,t) \diff z
\end{align*}
Define the function $b:\Omega\times [0,\infty) \to \Csp$ as 
\begin{align}\label{eq:b}
b(x,t) =  \brac*[\Big]{\frac{i}{2\pi}} \int_{\Omega}\sqbrac{\frac{1}{\Phibar(x) - \Phibar(z)} - \frac{1}{\Phibar(x) - \frac{1}{\Phi(z)}}}\w(z,t) \diff z
\end{align}
Hence the equation for $X$ can be written as
\begin{align}\label{eq:X}
\frac{\diff X(x,t)}{\diff t} = b(X(x,t),t)\Phizbar(X(x,t))
\end{align}
with $X(x,0) = x$ for all $x \in \Omega$. 

We now convert the flow equation above in $\Omega$ to a flow equation on $\Dsp$. For $x\in \Omega$, let $y \in \Dsp$ be given by $y = \Phi(x)$. Consider the flow $Y:\Dsp\times[0,\infty) \to \Dsp $ given by 
\begin{align}\label{def:Y}
Y(y,t) = \Phi(X(x,t))
\end{align}
and define $\btil:\Dsp \times [0,\infty) \to \Csp$ as $\btil(y,t) = b(x,t)$. Then $\btil(Y(y,t),t) = b(X(x,t),t)$ and we have
\begin{align*}
\frac{\diff Y(y,t)}{\diff t} = \btil(Y(y,t),t)\, \abs{\Phi_z \compose \Phi^{-1} (Y(y,t))}^2
\end{align*}
along with $Y(y,0) = y$ for all $y \in \Dsp$. Now $\Psi_z = \frac{1}{\Phi_z\compose \Phi^{-1}}$ and hence
\begin{align}\label{eq:Y}
\frac{\diff Y(y,t)}{\diff t} = \btil(Y(y,t),t)\, \abs{\Psi_z(Y(y,t))}^{-2}
\end{align}
and $Y(y,0) = y$ for all $y \in \Dsp$. We can write a simple formula for $\btil$. As $y = \Phi(x)$ we see that
\begin{align*}
\btil(y,t) = b(x,t) & =  \brac*[\Big]{\frac{i}{2\pi}} \int_{\Omega}\sqbrac{\frac{1}{\Phibar(x) - \Phibar(z)} - \frac{1}{\Phibar(x) - \frac{1}{\Phi(z)}}}\w(z,t) \diff z \\
& =  \brac*[\Big]{\frac{i}{2\pi}} \int_{\Omega}\sqbrac{\frac{1}{\ybar - \Phibar(z)} - \frac{1}{\ybar - \frac{1}{\Phi(z)}}}\w(z,t) \diff z
\end{align*}
Next, we change variables by setting $s = \Phi(z)$ with $s\in\Dsp$ and observe that $\diff s = \abs{\Phiz(z)}^2\diff z$ and hence $\diff z = \abs{\Phiz\compose \Phi^{-1} (s)}^{-2} \diff s = \abs{\Psi_z(s)}^2\diff s$. Defining $\wtil:\Dsp\times[0,\infty) \to \Rsp$ as $\wtil(s,t) = \w(z,t)$ we get
\begin{align}\label{eq:btil}
\btil(y,t) =  \brac*[\Big]{\frac{i}{2\pi}} \int_{\Dsp}\sqbrac{\frac{1}{\ybar - \sbar} - \frac{1}{\ybar - \frac{1}{s}}}\wtil(s,t)  \abs{\Psi_z (s)}^{2} \diff s
\end{align}

\section{Weak Solutions}\label{sec:weak}

In this section we define Yudovich weak solutions and establish their existence and prove some of their basic properties. The existence of weak solutions will follow directly from the work \cite{GeLa13} and their properties will also follow easily. For the definition of weak solution we closely follow the definition as given in \cite{GeLa13,GeLa15}. In this paper we use the definition of Yudovich weak solution as given in \cite{LaZl19}. 

Let $\Omega$ satisfy \assref{ass:main} and let $T>0$. We say that $(u,\w)$ is in the Yudovich class in the time interval $[0,T)$ if 
\begin{align}\label{eq:Yudovich}
u \in \Linfty([0,T); \Ltwo(\Omega)), \qq  \w = \grad \times u \in \Linfty([0,T);  \Linfty(\Omega))
\end{align}
Now consider initial data $(u_0,\w_0)$ satisfying
\begin{align}\label{eq:initial}
\begin{split}
&u_0 \in \Ltwo(\Omega), \qq \w_0 = \grad \times u_0 \in \Linfty(\Omega),   \\
& \int_{\Omega} u_0 \cdot h = 0 \quad \forall h \in \mathcal{H}(\Omega) = \cbrac{ h \in \Ltwo(\Omega) \suchthat h = \grad p \tx{ for some } p \in H^1(\Omega)}
 \end{split}
\end{align}
\begin{definition}
We say that $(u,\w)$ is a Yudovich weak solution to the Euler equation \eqref{eq:Euler} with initial condition $(u_0,\w_0)$ in the time interval $[0,T)$, if $(u,\w)$ is in the Yudovich class \eqref{eq:Yudovich} and satisfies
\begin{align}\label{eq:weak}
\int_0^T \int_{\Omega} \w(\pt \varphi + u\cdot \grad \varphi) \diff x \diff t = - \int_{\Omega} \w_0 \varphi(\cdot,0) \diff x \qq \forall \varphi \in C^{\infty}_c (\Omega\times[0,T)),
\end{align}
and for $a.e.$ $t\in [0,T)$ we have  
\begin{align}\label{eq:divweak}
\int_{\Omega} u(\cdot,t)\cdot h = 0 \qq \forall h\in \mathcal{H}(\Omega).
\end{align}
\end{definition}

To prove some of the properties of weak solutions, we will need to first prove some estimates regarding the function $\btil$ defined in \eqref{eq:btil}. 
\begin{lemma}\label{lem:btil}
Let $\Omega \subset \Rsp^2$ be a bounded simply connected domain with Jordan boundary. Let $\Psi:\Dsp \to \Omega$ be a Riemann map and assume that $\norm[\Linfty(\Omega)]{\Psi_z} \leq C_1$ for some $C_1>0$. Fix $t \in [0,\infty)$ and let $\wtil(\cdot, t) \in \Linfty(\Omega)$ with $\norm[\Linfty(\Omega)]{\wtil(\cdot, t)} \leq C_2$ for some $C_2>0$. Then the function $\btil(\cdot,t) : \Dspbar \to \Csp$ defined by \eqref{eq:btil} satisfies the following properties:
\begin{enumerate}
\item $\abs*[\big]{\btil(z,t)} \lesssim_{C_1, C_2} 1$ for all $z \in \Dspbar$ 
\vspace*{1mm}
\item $\abs*[\big]{\btil(z_1,t) - \btil(z_2,t)} \lesssim_{C_1, C_2} \phi(\abs{z_1 - z_2})$ for all $z_1, z_2 \in \Dspbar$
\vspace*{1mm}
\item $\Real(\btil(z, t)\zbar) = 0$ for all $z \in \Sone$
\vspace*{1mm}
\item $\abs{\Real\cbrac{\btil(z,t) \frac{\zbar}{\abs{z}} }} \lesssim_{C_1, C_2} \phi(1 - \abs{z})$ for all $z \in \Dspbar$, $z \neq 0$
\end{enumerate}
\end{lemma}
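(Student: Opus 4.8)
The key is that the integrand in \eqref{eq:btil} is (up to the harmless constant factor $i/2\pi$ and the bounded weight $\abs{\Psi_z(s)}^2$, which has $L^\infty$ norm $\le C_1^2$ and total mass $=\abs{\Omega}$) built out of the two kernels $\frac{1}{\ybar-\sbar}$ and $\frac{1}{\ybar-1/s}$. The first is a Cauchy-type singular kernel; the second has no singularity for $y,s\in\overline{\Dsp}$ because $1/s$ lies outside $\overline{\Dsp}$, so $\abs{\ybar - 1/s}\ge \abs{1/s}-1 = (1-\abs{s})/\abs{s}$, but in fact this lower bound degenerates as $\abs{s}\to 1$, so one must be a little careful. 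The cleanest route is to observe that when $\abs{s}<1$ we have $\overline{1/s}=1/\sbar=\sbar/\abs{s}^2=\sbar^{*}$ in the notation of the paper, and to combine the two terms: for $z=y$ fixed,
\[
\frac{1}{\ybar-\sbar}-\frac{1}{\ybar-1/s}
= \frac{1/s-\sbar}{(\ybar-\sbar)(\ybar-1/s)}
= \frac{(1-\abs{s}^2)/s}{(\ybar-\sbar)(\ybar-1/s)}.
\]
This exhibits the numerator vanishing to first order as $\abs{s}\to 1$, which is exactly what compensates the lack of an a priori lower bound on $\abs{\ybar-1/s}$ near the boundary. With this identity, parts (1)–(4) all reduce to elementary estimates on $\int_{\Dsp}$ of kernels of the form $\abs{y-s}^{-1}$ times bounded weights.

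\textbf{Part (1).} From the combined identity, $\lvert\frac{1}{\ybar-\sbar}-\frac{1}{\ybar-1/s}\rvert = \frac{1-\abs{s}^2}{\abs{s}\,\abs{\ybar-\sbar}\,\abs{\ybar-1/s}}$. Using $\abs{\ybar-1/s}=\frac{1}{\abs{s}}\abs{s\ybar-1}\ge \frac{1}{\abs{s}}(1-\abs{s}\abs{y})\ge\frac{1}{\abs{s}}(1-\abs{s})$ and $1-\abs{s}^2\le 2(1-\abs{s})$, the whole thing is $\lesssim \frac{1}{\abs{y-s}}$. Hence $\abs{\btil(z,t)}\lesssim C_2\,C_1^2 \int_{\Dsp}\frac{\diff s}{\abs{z-s}}\lesssim_{C_1,C_2}1$ since $\Dsp$ is bounded and $\abs{z-s}^{-1}$ is locally integrable in $2$D.

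\textbf{Part (3).} For $z\in\Sone$ we have $\zbar=1/z$, so $\zbar^{*}=z=1/\zbar$, i.e. $1/\Phi(\ztwo)$ and $\Phibar(\ztwo)$ play symmetric roles. Concretely, for $z\in\Sone$, $\Real\bigl(\zbar\bigl[\frac{1}{\ybar-\sbar}-\frac{1}{\ybar-1/s}\bigr]\bigr)\big|_{y=z}$: write $\zbar(\zbar-\sbar)^{-1}=(1-\sbar/\zbar)^{-1}=(1-\sbar z)^{-1}$ and $\zbar(\zbar-1/s)^{-1}=(1-1/(sz))^{-1}=\frac{sz}{sz-1}=-\frac{sz}{1-sz}$, hence the bracket times $\zbar$ equals $\frac{1}{1-\sbar z}+\frac{sz}{1-sz}$. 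Note $\frac{sz}{1-sz}=\overline{\bigl(\frac{\sbar\zbar}{1-\sbar\zbar}\bigr)}$ and, for $z\in\Sone$, $\sbar\zbar=\overline{sz}$... one finds the two summands are complex conjugates of each other up to sign, forcing the real part of the integrand to vanish pointwise in $s$. (Alternatively, and more robustly: $u\cdot n=0$ on $\partial\Omega$ is built into the Green's function via the reflected charge, so $\Real(\btil(z,t)\zbar)$ is the normal component of velocity at a boundary point in conformal coordinates, which is zero — but since we want a self-contained computation, the explicit conjugation above is the way to go.)

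\textbf{Part (2).} This is the main obstacle. Write $\btil(z_1)-\btil(z_2)=\frac{i}{2\pi}\int_{\Dsp} K(z_1,s)-K(z_2,s)\,\,\wtil\,\abs{\Psi_z}^2\diff s$ where $K(z,s)$ is the combined kernel. One splits $\Dsp=B(z_1,2\rho)\cup(\Dsp\setminus B(z_1,2\rho))$ with $\rho=\abs{z_1-z_2}$ (assume $\rho\le\frac{1}{10}$; for $\rho$ bounded below part (1) already gives the bound since $\phi(\rho)\gtrsim 1$). On the near ball, estimate $\abs{K(z_1,s)}+\abs{K(z_2,s)}\lesssim \abs{z_1-s}^{-1}+\abs{z_2-s}^{-1}$ and integrate: $\int_{B(z_1,2\rho)}\abs{z_1-s}^{-1}\diff s\lesssim \rho\le\phi(\rho)$. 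On the far region, use the mean value / fundamental theorem of calculus on the segment from $z_1$ to $z_2$ together with the bound $\abs{\grad_z K(z,s)}\lesssim \abs{z-s}^{-2}$ (which one checks from the explicit formula, noting again that the $1/s$-term's gradient is bounded by the same quantity after using the $(1-\abs{s}^2)$ numerator), giving $\abs{K(z_1,s)-K(z_2,s)}\lesssim \rho\,\abs{z_1-s}^{-2}$ for $s\notin B(z_1,2\rho)$, and then $\int_{\Dsp\setminus B(z_1,2\rho)}\rho\,\abs{z_1-s}^{-2}\diff s\lesssim \rho\log(1/\rho)\lesssim\phi(\rho)$. Summing the two contributions and multiplying by $C_2 C_1^2$ gives $\abs{\btil(z_1)-\btil(z_2)}\lesssim_{C_1,C_2}\phi(\abs{z_1-z_2})$. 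The one subtlety to watch is that near $\partial\Dsp$ the gradient bound $\abs{\grad_z K}\lesssim\abs{z-s}^{-2}$ must be uniform up to the boundary, which is precisely where the factor $1-\abs{s}^2$ in the numerator of the combined kernel is essential — without combining the two terms one would see a spurious singularity at $\abs{s}=1$.

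\textbf{Part (4).} For general $z\in\overline{\Dsp}\setminus\{0\}$, write $\Real\bigl(\btil(z,t)\tfrac{\zbar}{\abs{z}}\bigr)$ and compare with part (3): let $z'=z/\abs{z}\in\Sone$, so by (3), $\Real(\btil(z',t)\overline{z'})=0$. Then $\Real\bigl(\btil(z,t)\tfrac{\zbar}{\abs{z}}\bigr) = \Real\bigl((\btil(z,t)-\btil(z',t))\overline{z'}\bigr)$ since $\Real(\btil(z',t)\overline{z'})=0$ and $\zbar/\abs{z}=\overline{z'}$. By part (2), $\abs{\btil(z,t)-\btil(z',t)}\lesssim_{C_1,C_2}\phi(\abs{z-z'})=\phi(1-\abs{z})$, and $\abs{\overline{z'}}=1$, so $\abs{\Real(\btil(z,t)\tfrac{\zbar}{\abs{z}})}\lesssim_{C_1,C_2}\phi(1-\abs{z})$, as claimed. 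Thus (4) is a clean corollary of (2) and (3), and the real work is all in establishing the gradient bound for (2) uniformly up to $\Sone$.
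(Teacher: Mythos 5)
Your overall route matches the paper's: parts (1) and (2) are kernel estimates read off from the explicit formula \eqref{eq:btil}, part (3) is a direct boundary computation, and part (4) is deduced from (2) and (3) exactly as in the paper. For part (2) the paper is a bit slicker: it never needs the combined kernel or a gradient bound, because the elementary inequality $\abs{z - 1/\sbar} \geq \abs{z - s}$ for $z \in \Dspbar$, $s \in \Dsp$ lets one bound the difference of each of the two kernels separately by $\abs{z_1 - z_2}/(\abs{z_1 - s}\abs{z_2 - s})$ and then invoke \lemref{lem:phiabest}; your near/far splitting with $\abs{\grad_z K(z,s)} \lesssim \abs{z-s}^{-2}$ is a correct, essentially equivalent reproof of that lemma, and the same inequality $\abs{\zbar - 1/s} = \abs{z - 1/\sbar} \geq \abs{z-s}$ already gives the gradient bound uniformly up to $\Sone$, so the $(1-\abs{s}^2)$ numerator is not actually needed there.

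The genuine problem is the algebra in part (3). The identity $\zbar(\zbar - 1/s)^{-1} = (1 - 1/(sz))^{-1}$ is false: since $1/\zbar = z$ on $\Sone$, one has $\zbar/(\zbar - 1/s) = 1/\brac{1 - \frac{1}{s\zbar}} = 1/(1 - z/s) = s/(s-z)$, which is the complex conjugate of your expression, not equal to it. With your expression the asserted pointwise cancellation does not occur: at $z = i$, $s = 1/2$ your sum $\frac{1}{1 - \sbar z} + \frac{sz}{1 - sz}$ equals $0.6 + 0.8i$, which is not real, so the real part of $i$ times it does not vanish. The statement is rescued by the correct computation, which is what the paper does in \eqref{eq:btilboundary}: for $z \in \Sone$,
\begin{align*}
\zbar\brac{\frac{1}{\zbar - \sbar} - \frac{1}{\zbar - 1/s}} \;=\; \zbar\,\frac{\sbar - 1/s}{(\zbar - \sbar)(\zbar - 1/s)} \;=\; \frac{1 - \abs{s}^2}{\abs{z - s}^2},
\end{align*}
a nonnegative real number (use $\sbar - 1/s = (\abs{s}^2 - 1)/s$, $\zbar - 1/s = (s-z)/(zs)$ and $\zbar - \sbar = \overline{z - s}$). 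Hence the integrand of $\zbar\,\btil(z,t)$ is $\frac{i}{2\pi}$ times a real quantity, so $\Real(\btil(z,t)\,\zbar) = 0$. With part (3) repaired in this way, your part (4) goes through verbatim, since it only uses (2) and (3) and coincides with the paper's argument.
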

\begin{proof}
The first estimate follows directly from the formula \eqref{eq:btil}. Now let $z_1, z_2 \in \Dspbar$. Observe that for $z, s \in \Dsp$ we have $\abs{z - \frac{1}{\sbar}} \geq \abs{z - s}$. Hence
\begin{align*}
\abs*[\big]{\btil(z_1,t) - \btil(z_2,t)} &  \lesssim_{C_1, C_2} \int_{\Dsp} \frac{\abs{z_1 - z_2}}{\abs{\zbar_1 - \sbar}\abs{\zbar_2 - \sbar}} \diff s + \int_{\Dsp} \frac{\abs{z_1 - z_2}}{\abs{\zbar_1 - \frac{1}{s}}\abs{\zbar_2 - \frac{1}{s}}} \diff s \\
& \lesssim_{C_1, C_2} \int_{\Dsp} \frac{\abs{z_1 - z_2}}{\abs{\z_1 - s}\abs{\z_2 - s}} \diff s 
\end{align*}
The second estimate now follows from \lemref{lem:phiabest}. Now for $y \in \Sone$ we have $\ybar = \frac{1}{y}$ and hence
\begin{align}\label{eq:btilboundary}
\begin{aligned}
\ybar\,\btil(y,t) & = \brac{\frac{i}{2\pi}} \ybar \int_{\Dsp}\sqbrac{\frac{1}{\ybar - \sbar} - \frac{1}{\ybar - \frac{1}{s}}}\wtil(s,t)  \abs{\Psi_z (s)}^{2} \diff s \\
& = \brac{\frac{i}{2\pi}} \ybar \int_{\Dsp}\sqbrac{\frac{\sbar - \frac{1}{s}}{\brac{\ybar - \sbar}\brac{\frac{1}{y} - \frac{1}{s}}}}\wtil(s,t)  \abs{\Psi_z (s)}^{2} \diff s \\
& =  \brac{\frac{i}{2\pi}} \int_{\Dsp} \sqbrac{ \frac{(1 - \abs{s}^2) \abs{y}^2}{\abs{y-s}^2}  }\wtil(s,t)  \abs{\Psi_z (s)}^{2} \diff s
\end{aligned}
\end{align}
Hence $\Real(\btil(y,t)\,\ybar) = 0$ for $y \in \Sone$ proving the third estimate.  Now using this we see that for $z \in \Dspbar $ and $z \neq 0$ we have
\begin{align*}
\Real\cbrac{\btil(z,t) \frac{\zbar}{\abs{z}} } = \Real\cbrac{\btil(z,t) \frac{\zbar}{\abs{z}} } - \Real\cbrac{\btil\brac{\frac{z}{\abs{z}},t} \frac{\zbar}{\abs{z}} }
\end{align*}
Hence the fourth estimate now follows from the second estimate. 
\end{proof}

Let us now state the existence result for weak solutions which follows from the very general existence result proved in \cite{GeLa13}. 

\begin{thm}\label{thm:weak}
Let $\Omega$ satisfy \assref{ass:main} and consider an initial data $(u_0,\w_0)$ satisfying \eqref{eq:initial} in the domain $\Omega$. Then there exists a Yudovich weak solution $(u,\w)$ in domain $\Omega$ in the time interval $[0,\infty)$ in the sense of \eqref{eq:weak} and \eqref{eq:divweak}. 
\end{thm}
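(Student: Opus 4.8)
The plan is to obtain this as a direct consequence of the general global existence theory for weak solutions of the 2D Euler equation on rough domains due to Gérard-Varet and Lacave \cite{GeLa13} (see also \cite{GeLa15}), together with a short verification that the solutions they construct lie in the class of Yudovich weak solutions in the sense of \eqref{eq:weak}--\eqref{eq:divweak}. First I would note that \assref{ass:main} in particular forces $\Omega$ to be a bounded simply connected domain with Jordan boundary, so it already lies in the class of domains treated in \cite{GeLa13}; for the existence statement no quantitative regularity of $\partial\Omega$ is needed, only the structural conditions built into \eqref{eq:initial} on the data. Since $\Omega$ is bounded, the hypothesis $\w_0 \in \Linfty(\Omega)$ gives $\w_0 \in \Lone(\Omega)\cap\Linfty(\Omega)$, an admissible class of initial vorticity in their framework, and $u_0$ is the finite-energy divergence-free field with curl $\w_0$ singled out by the orthogonality condition in \eqref{eq:initial}. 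Applying their theorem then produces a global weak solution $(u,\w)$ on $[0,\infty)$ with this initial data.

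It then remains to check that this solution meets the three requirements in the definition of a Yudovich weak solution. The solutions of \cite{GeLa13} are transported by an a.e.\ defined measure-preserving flow generated by $u$, so all $L^p$ norms of the vorticity are conserved; in particular $\w \in \Linfty([0,\infty);\Linfty(\Omega))$. The velocity is the Biot--Savart field $u = \grad^\perp\Delta^{-1}\w$, and since $\Omega$ is bounded and $\w(\cdot,t)\in\Linfty(\Omega)$ uniformly in $t$, the explicit kernel \eqref{eq:Kernel} (equivalently, standard elliptic bounds for the Dirichlet Laplacian) gives $u \in \Linfty([0,\infty);\Ltwo(\Omega))$, so $(u,\w)$ is in the Yudovich class \eqref{eq:Yudovich}. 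The distributional vorticity equation \eqref{eq:weak} is precisely the transport identity established for these solutions, and the condition \eqref{eq:divweak} is built into their recovery of $u$ from $\w$: it encodes $\grad\cdot u = 0$ together with $u\cdot n = 0$, both of which hold for the stream-function representation $u=\grad^\perp\Delta^{-1}\w$ because $\Delta^{-1}\w$ vanishes on $\partial\Omega$. Reconciling constants, sign conventions, and test-function spaces is routine and follows \cite{LaZl19}, whose definition we have adopted.

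The one point needing a little care — and the closest thing to an obstacle — is matching the two weak formulations at the level of the Helmholtz/Leray decomposition, i.e.\ checking that the finite-energy divergence-free field with prescribed curl $\w(\cdot,t)$ produced by \cite{GeLa13} is exactly the one characterized by \eqref{eq:divweak}. This is Hodge theory on a bounded simply connected planar domain: such a domain has trivial first cohomology, so the space of divergence-free, curl-free vector fields tangent to $\partial\Omega$ is $\{0\}$, and hence a divergence-free field with given curl that is orthogonal to $\mathcal{H}(\Omega)$ is unique; since this is exactly the field constructed in \cite{GeLa13}, \eqref{eq:divweak} holds for a.e.\ $t$. (A self-contained alternative would be to mollify $\w_0$, solve the regularized problems, and pass to the limit using the uniform bounds on $\btil$ from \lemref{lem:btil} and weak-$*$ compactness; but since \cite{GeLa13} already supplies everything required, I would simply invoke it.)
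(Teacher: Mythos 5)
Your proposal is correct and follows essentially the same route as the paper: the paper itself offers no proof beyond invoking the general existence result of G\'erard-Varet and Lacave \cite{GeLa13}, exactly as you do. The additional verifications you sketch (conservation of $\Linfty$ vorticity bounds, the $\Ltwo$ velocity bound from the kernel \eqref{eq:Kernel}, and the Hodge-theoretic matching of \eqref{eq:divweak} with the Leray/Biot--Savart field on a simply connected domain) are sound and simply make explicit what the paper leaves to the reader.
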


In addition to the above existence result, we will also need some basic properties of these weak solutions. We now prove all such required properties in an all encompassing lemma, parts of which appear in similar statements in \cite{La15, LaZl19,HaZl21, AgNa22}.

\begin{lemma}\label{lem:transport}
Let $\Omega$ satisfy \assref{ass:main} and let $(u,\w)$ be a Yudovich weak solution with initial vorticity $\w_0$ in the domain $\Omega$ in the time interval $[0,\infty)$. Let $C_0 = \norm[\Linfty(\Omega\times {[}0,\infty))]{\w }$ and let $S = \cbrac{t \in {[}0,\infty) \suchthat \norm[\Linfty(\Omega)]{\w(\cdot,t)} \leq C_0}$. Clearly $S$ is a set of full measure in ${[}0,\infty)$. The weak solution satisfies the following properties:
\begin{enumerate}
\item We can redefine $u$ on a set of zero measure in $\Omega \times [0,\infty)$ so that  
\begin{align}\label{eq:ufrombiotSavart}
u(x,t) = \int_\Omega K_\Omega(x,y)\w(y,t) \diff y \qq \tx{ for all } t\in S \tx{ and } x \in \Omega
\end{align}
where $K_\Omega$ is given by \eqref{eq:Kernel}. Moreover 
\begin{align}\label{eq:ulinftybound}
\sup_{x \in \Omega}\abs{u(x,t)} \lesssim_{\Omega, C_0 } 1  \qq \tx{ for all } t \in [0,\infty)
\end{align}
and for each compact set $K \subset \Omega$ and $t \in [0,\infty)$ we have
\begin{align}\label{eq:uloglipinterior}
\sup_{x_1, x_2 \in K} \frac{\abs{u(x_1, t)  - u(x_2, t)}}{\phi(\abs{x_1 - x_2})} \lesssim_{K, \Omega, C_0 } 1
\end{align}

\item For each fixed $x \in \Omega$, the ODE defined by \eqref{eq:ODEXfundamental}, has a unique solution in all of $t \in [0,\infty)$, and $X(x,t) \in \Omega$ for all $t \in [0,\infty)$.  The map $X(\cdot,t):\Omega \to \Omega$ is a homeomorphism for each $t\in [0,\infty)$ and the functions $X,X^{-1}:\Omega\times[0,\infty) \to \Omega$ are continuous. Moreover for each $t \in [0,\infty)$, the maps $X(\cdot,t), X^{-1}(\cdot,t): \Omega \to \Omega$ are measure preserving. 
\item We have $\w(x,t) = \w_0(X^{-1}(x,t))$ for a.e. $(x,t) \in \Omega\times[0,\infty)$. We can redefine $\w$ and $u$ on a set of measure zero in $\Omega \times [0,\infty)$ so that $\w(x,t) = \w_0(X^{-1}(x,t))$ and \eqref{eq:ufrombiotSavart} hold for all $(x,t) \in \Omega \times [0,\infty)$. Also this redefinition does not change the mappings $X, X^{-1}:\Omega \times [0,\infty) \to \Omega$ and the estimates \eqref{eq:ulinftybound} and \eqref{eq:uloglipinterior} continue to hold.

\item If $(t_n)_{n=1}^\infty$ is a sequence in $[0,\infty)$ with $t_n \to t \in [0,\infty)$, then for any $1\leq p <\infty$ we have $\norm[L^p(\Omega)]{\w(\cdot,t_n) - \w(\cdot,t)} \to 0$ as $n\to \infty$. 
\item The functions $b:\Omegabar\times[0,\infty) \to \Csp$, $\btil:\Dspbar \times [0,\infty) \to \Csp$ and $u:\Omega\times[0,\infty) \to \Csp$ are bounded continuous functions.
\end{enumerate}
\end{lemma}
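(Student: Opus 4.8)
The plan is to reduce everything to the representation $u(x,t)=\overline{\Phi_z(x)}\,b(x,t)$ with $b(x,t)=\btil(\Phi(x),t)$, coming from \eqref{eq:Kernel}--\eqref{eq:btil}, together with the four estimates of \lemref{lem:btil} applied with $C_1=c_1$ and $C_2=C_0$. For part (1) I would first fix a time $t$ in the full measure set where $u(\cdot,t)\in\Ltwo(\Omega)$ and \eqref{eq:divweak} holds: orthogonality of $u(\cdot,t)$ to all $L^2$ gradients together with $\grad\times u(\cdot,t)=\w(\cdot,t)$ determines $u(\cdot,t)$ uniquely on the simply connected $\Omega$ (standard div--curl uniqueness / Hodge decomposition), and the Biot--Savart field $\int_\Omega K_\Omega(\cdot,y)\w(y,t)\diff y=\grad^\perp\Delta^{-1}\w(\cdot,t)$ satisfies both, so it equals $u(\cdot,t)$. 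Redefining $u(\cdot,t)$ to be this integral for every $t\in S$ changes $u$ only on a null subset of $\Omega\times[0,\infty)$ and gives \eqref{eq:ufrombiotSavart}. Substituting \eqref{eq:Kernel} and $s=\Phi(z)$ exactly as in the derivation of \eqref{eq:btil} yields $u=\overline{\Phi_z}\,(\btil\circ\Phi)$; then $\abs{\Phi_z}=\abs{\Psi_z\circ\Phi}^{-1}\in[c_1^{-1},c_0^{-1}]$ by \assref{ass:main}(1) and $\abs{\btil}\lesssim_{\Omega,C_0}1$ by \lemref{lem:btil}(1) give \eqref{eq:ulinftybound}. For \eqref{eq:uloglipinterior}, on a compact $K\subset\Omega$ the holomorphic $\Phi_z$ is Lipschitz and $\Phi$ is globally Lipschitz ($\abs{\Phi_z}\le c_0^{-1}$), while $\btil(\cdot,t)$ is bounded and $\phi$-log-Lipschitz on $\Dspbar$ uniformly in $t$ by \lemref{lem:btil}(1)--(2); composing and using $x\le\phi(x)$ and $\phi(cx)\le c\phi(x)$ for $c\ge1$ (both recorded after \eqref{def:phi}) closes it.

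For part (2) I would pass to the disc, $Y(y,t)=\Phi(X(\Psi(y),t))$, which solves \eqref{eq:Y}, i.e. $\dot Y=\btil(Y,t)\abs{\Psi_z(Y)}^{-2}$; note that since the ODE only involves $\int_0^t(\cdots)\diff s$, the null-in-time redefinitions of parts (1) and (3) are harmless, and one may run the usual Carath\'eodory/Peano construction on the a.e.-defined field. The key a priori estimate is a confinement bound: writing $r(t)=\abs{Y(y,t)}$, one has $\abs{\dot r}=\abs{\Psi_z(Y)}^{-2}\,\abs{\Real\cbrac{\btil(Y,t)\,\overline{Y}/\abs{Y}}}\le c_0^{-2}C_\Omega\,\phi(1-r)$ by \lemref{lem:btil}(4), and since $\phi(x)=x(-\ln x)$ near $0$ the integral $\int_0^{1-\abs{y}}\phi(s)^{-1}\diff s$ diverges, so Osgood's lemma forces $1-\abs{Y(y,t)}\ge\eta(t)>0$ for all finite $t$, with $\eta$ independent of $y$. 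Hence on each $[0,T]$ every integral curve stays in the fixed compact set $\cbrac{\abs{y}\le1-\eta(T)}\subset\Dsp$, where $\Psi_z$ is holomorphic, so there the right-hand side is the product of a $\phi$-log-Lipschitz function (\lemref{lem:btil}(2)) and a Lipschitz one, hence $\phi$-log-Lipschitz uniformly in $t$. Peano's theorem plus this confinement bound and \eqref{eq:ulinftybound} give a global solution; Osgood's criterion gives uniqueness; and the Osgood stability estimate (valid because both trajectories stay in the same compact) gives continuous, in fact locally bi-Lipschitz, dependence on $y$. The backward flow solves an ODE of the same structure (its radial component still decays like $\phi(1-r)$), so $Y(\cdot,t)$ is a homeomorphism of $\Dsp$ with jointly continuous inverse; transporting back by the Carath\'eodory homeomorphism $\Psi:\Dspbar\to\Omegabar$ gives the statements for $X$. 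For the measure-preserving property I would mollify $\w$ to get smooth divergence-free $u^\ep$, whose flows $X^\ep(\cdot,t)$ preserve Lebesgue measure by Liouville's theorem, and pass to the limit using the uniform Osgood stability ($X^\ep\to X$ locally uniformly), so $X(\cdot,t)$ and $X^{-1}(\cdot,t)$ are measure preserving.

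Parts (3)--(5) are then routine. Part (3) is the standard fact that, once the flow is a well-defined measure-preserving homeomorphism generated by a field that is log-Lipschitz on compacts, the unique $\Linfty$ weak solution of \eqref{eq:weak} is $\w(x,t)=\w_0(X^{-1}(x,t))$ (as in \cite{La15,LaZl19,HaZl21,AgNa22}); redefining $\w$ by this formula and $u$ by \eqref{eq:ufrombiotSavart} changes them on a null set of $\Omega\times[0,\infty)$, does not alter $X$, and preserves \eqref{eq:ulinftybound}--\eqref{eq:uloglipinterior}. For part (4), since $\w(\cdot,s)=\w_0\circ X^{-1}(\cdot,s)$, $X^{-1}$ is jointly continuous and each $X^{-1}(\cdot,s)$ is measure preserving, an $\ep/3$ argument with $C_c(\Omega)$ dense in $L^p(\Omega)$ (for continuous compactly supported $g$, uniform convergence on compacts gives convergence, and the error $\norm[L^p]{(\w_0-g)\circ X^{-1}(\cdot,t_n)}=\norm[L^p]{\w_0-g}$ by measure preservation) yields $\norm[L^p]{\w(\cdot,t_n)-\w(\cdot,t)}\to0$. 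For part (5), $\btil(\cdot,t)$ is bounded and $\phi$-log-Lipschitz on $\Dspbar$ uniformly in $t$ by \lemref{lem:btil}(1)--(2), and $t\mapsto\btil(y,t)$ is continuous uniformly in $y$: from \eqref{eq:btil}, H\"older with the kernel $(\ybar-\sbar)^{-1}-(\ybar-\tfrac1s)^{-1}\in\Lfourbythree(\Dsp)$ uniformly in $y$ bounds $\abs{\btil(y,t_n)-\btil(y,t)}\lesssim_\Omega\norm[L^4(\Dsp)]{(\wtil(\cdot,t_n)-\wtil(\cdot,t))\abs{\Psi_z}^2}\to0$ by part (4) and \assref{ass:main}(1); hence $\btil$ is bounded and jointly continuous on $\Dspbar\times[0,\infty)$, so $b=\btil\circ\Phi$ is bounded and continuous on $\Omegabar\times[0,\infty)$ ($\Phi$ extends to a homeomorphism $\Omegabar\to\Dspbar$), and $u=\overline{\Phi_z}\,b$ is bounded and continuous on $\Omega\times[0,\infty)$.

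The main obstacle is part (2). Under \assref{ass:main} the factor $\abs{\Psi_z}^{-2}$ in \eqref{eq:Y} need not even be continuous up to $\Sone$ (the hypothesis only gives $\Psi_z$ uniformly $\dot H^{1/2}$ on circles), so the disc velocity field is not log-Lipschitz near the boundary and classical ODE theory does not apply there directly. The resolution is precisely the confinement estimate from \lemref{lem:btil}(4): because the radial component of $\btil$ decays like $\phi$ of the distance to $\Sone$, Osgood's lemma confines every trajectory, on each finite time interval, to a compact subset of the \emph{open} disc, where $\Psi_z$ is holomorphic and the field is again log-Lipschitz; thus the flow only ever sees a well-behaved field. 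The second delicate point is transferring the measure-preserving property from the smooth approximations to the limiting flow uniformly up to $\partial\Omega$, which is why the argument is set up through the uniform Osgood stability estimate.
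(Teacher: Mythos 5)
Your proposal follows the paper's proof essentially step by step: identification of $u$ with the Biot--Savart field via \eqref{eq:divweak} and the representation $u=\overline{\Phi_z}\,(\btil\circ\Phi)$, interior boundedness and log-Lipschitz bounds from \lemref{lem:btil} parts (1)--(2), confinement of trajectories through \lemref{lem:btil} part (4) plus an Osgood argument (the paper's \lemref{lem:phi}), the backward flow for injectivity/surjectivity and continuity of $X^{-1}$, measure preservation via smooth divergence-free approximations, Liouville's theorem and a stability estimate, the transport identity $\w=\w_0\circ X^{-1}$ quoted from the literature for part (3), and the $L^{4/3}$ kernel bound combined with part (4) for the joint continuity in part (5). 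The only substantive deviation is that you mollify $\w$ globally rather than mollifying $u$ on compactly contained open sets as the paper does; this variant works (interior elliptic regularity makes $u^\ep$ smooth and divergence free inside $\Omega$, and the kernel bound gives $u^\ep\to u$ uniformly), provided you also confine the approximate trajectories as the paper does for its $X_\ep$.

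Two claims in your part (2) are wrong as stated, though both are repairable and neither is used in the strength claimed. First, the confinement bound $1-\abs{Y(y,t)}\ge\eta(t)>0$ cannot hold with $\eta$ independent of $y\in\Dsp$, and it is false that every integral curve stays in a fixed compact set $\cbrac{\abs{y}\le 1-\eta(T)}$: since $Y(\cdot,t)$ maps $\Dsp$ onto $\Dsp$, trajectories starting arbitrarily close to $\Sone$ remain arbitrarily close to it. What the Osgood argument (the paper's \lemref{lem:phi}) actually yields is a lower bound on $1-\abs{Y(y,t)}$ in terms of $1-\abs{y}$, i.e.\ uniformity only over initial points in a fixed compact subset of $\Dsp$; this weaker statement suffices for per-trajectory global existence and uniqueness, for the local stability estimates, and for the limit passage in the measure-preservation step (test functions have compact support), so you should simply drop the uniform-in-$y$ phrasing. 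Second, the flow is not ``locally bi-Lipschitz'' in $y$: with a velocity that is only log-Lipschitz on compacta, Osgood stability gives H\"older continuity with a time-decaying exponent, as in \eqref{eq:Xholdercontinuity}, and the paper explicitly notes that $X(\cdot,t)$ is in general not locally Lipschitz --- which is precisely why the measure-preserving property is obtained through the smooth approximation rather than directly from the flow's regularity.
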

\begin{proof}
We prove the statements sequentially:
\begin{enumerate}
\item Let $H^1_0(\Omega)$ be the completion of $C^{\infty}_c(\Omega)$ in the $H^1(\Omega)$ norm. Let $t \in S$ and let $\psi(\cdot, t) \in H^1_0(\Omega)$ be the unique solution to
\begin{align}\label{eq:psisolvingPoisson}
\Delta \psi(\cdot, t) = \w(\cdot, t) \quad \tx{ in } \Omega, \qq\qq \psi(x,t)  = 0 \quad \tx{ for  } x \in  \partial \Omega
\end{align}
Now define $\tilde{u}:\Omega\times[0,\infty) \to \Csp$ as  $\tilde{u}(\cdot,t) = \grad^\perp \psi(\cdot,t)$ for $t \in S$ and $\tilde{u}(\cdot,t) = 0$ for $t \in [0,\infty) \backslash S$. It is easy to see that for all $t \in [0,\infty)$, $\tilde{u}(\cdot,t)$ satisfies \eqref{eq:divweak}. Now let $v:\Omega \times [0,\infty) \to \Csp$ be defined as $v = u - \tilde{u}$. Hence we see that $\grad \cdot v = \grad \times v = 0$ in $\Omega \times [0,\infty)$. Therefore there exists $S' \subset [0,\infty)$ of full measure so that for all $t \in S'$ we have $\grad \cdot v(\cdot, t) = \grad \times v(\cdot,t) = 0$. Therefore for all $t \in S'$ we see that $v(\cdot,t)$ is an anti-holomorphic function and hence $v(\cdot,t) = \grad p$ for some $p \in H^1(\Omega)$. Therefore from \eqref{eq:divweak} we see that $v(x, t) = 0$ for a.e. $(x,t) \in \Omega \times [0,\infty)$. Therefore $u = \tilde{u}$ for a.e. $(x,t) \in \Omega \times [0,\infty)$. We therefore now redefine $u$ by $\tilde{u}$. 

Now from the definition of Green's function we see that for $t \in S$, $\psi(x, t) = \int_\Omega G_\Omega(x,y) \w(y,t) \diff y$. Applying $\grad^\perp$, we obtain $u(x,t) = \int_\Omega K_\Omega(x,y)\w(y,t) \diff y$ for all $t \in S$. 

From \eqref{eq:X} we see that for $t\in S$ we have
\begin{align*}
u(x,t) = b(x,t) \Phizbar(x) = \btil(\Phi(x),t) \Phizbar(x)
\end{align*}
Now from parts (1) and (2) of \lemref{lem:btil} and \assref{ass:main} we immediately get \eqref{eq:ulinftybound} and \eqref{eq:uloglipinterior}. 

\item From \eqref{eq:ulinftybound} and \eqref{eq:uloglipinterior}, we see that the ODE \eqref{eq:ODEXfundamental} has a unique solution at least for a short time. Now let $Y(y,t)$ be given by \eqref{def:Y} and we see that it satisfies the ODE \eqref{eq:Y}. Using \lemref{lem:btil} part (4) we obtain for all $t \in S$
\begin{align*}
\abs{\frac{\diff}{\diff t} (1 - \abs{Y(y,t)})}  & =  \abs{\Real\cbrac{\frac{\Ybar(y,t)}{\abs{Y(y,t)} }\frac{\diff Y(y,t)}{\diff t}  }} \\
& = \abs{\Psi_z(Y(y,t))}^{-2} \abs{\Real\cbrac{\frac{\Ybar(y,t)}{\abs{Y(y,t)} }\, \btil(Y(y,t),t)}} \\
& \lesssim_{\Omega, C_0 } \phi(1 - \abs{Y(y,t)})
\end{align*}
Hence from \lemref{lem:phi} we see that $1 - \abs{Y(y,t)} > 0$ for all $(y,t) \in \Dsp \times [0,\infty)$. Therefore $X(x,t) \in \Omega$ for all $(x,t) \in \Omega\times [0,\infty)$ and the ODE \eqref{eq:ODEXfundamental} has a unique solution for all of $t \in [0,\infty)$. 

Now let $K \subset \Omega$ be a compact set and let $x_1, x_2 \in K$ and let $T>0$. We see that for a.e. $t \in [0,T]$ we have
\begin{align*}
\abs{\frac{\diff}{\diff t} \abs{X(x_1,t) - X(x_2,t)}} & \leq \abs{u(X(x_1,t),t) - u(X(x_2,t),t)} \\
& \lesssim_{K, \Omega, C_0 } \phi(\abs{X(x_1,t) - X(x_2, t)})
\end{align*}
Therefore from \lemref{lem:phi} we see that there exists a constant $C_1 = C_1(\Omega) > 0$ and another constant $c = c(K, \Omega, T, C_0 ) > 0$ so that for all $t \in [0,T]$ we have
\begin{align}\label{eq:Xholdercontinuity}
\sqbrac{\frac{\abs{x_1 - x_2}}{C_1}}^{e^{ct}} \leq \abs{X(x_1,t) - X(x_2, t)} \leq C_1 \abs{x_1 - x_2}^{e^{-ct}}
\end{align}
Hence $X(\cdot,t)$ is continuous and injective for all $t \in [0, T]$. To see that it is surjective, fix $(x, t) \in \Omega\times [0, T]$ and consider the function $X^*(x,t,\tau)$ for $0\leq \tau\leq t$  satisfying the ODE
\begin{align}\label{eq:Xstarxttau}
\frac{\diff X^*(x,t,\tau)}{\diff \tau} = -u(X^*(x,t,\tau), t - \tau) \quad \tx{ for } 0\leq \tau \leq t, \qq X^*(x,t,0) = x
\end{align}
Now from \eqref{eq:ulinftybound}, \eqref{eq:uloglipinterior} we see that this ODE has a unique solution and by the same argument for $X(x,t)$, we see that $X^*(x,t,\tau) \in \Omega$ for all $0\leq \tau\leq t$. From the definition of $X^*(x,t,\tau)$ we now see that $X(X^*(x,t,t),t) = x$. Hence $X(\cdot,t):\Omega \to \Omega$ is a homeomorphism. From the ODE above, we now also see that $X^*(x,t,\tau) = X(X^{-1}(x,t), t - \tau)$ for all $0\leq \tau \leq t$. Now from \eqref{eq:ulinftybound} we see that for $0\leq t_1, t_2 \leq T$
\begin{align}\label{eq:Xtimediff}
\abs{X(x,t_1) - X(x,t_2)} \lesssim_{\Omega,C_0 } \abs{t_1 - t_2}
\end{align}
Now let $K \subset \Omega$ be a compact set. From \eqref{eq:Xholdercontinuity} we see that for $0\leq t_1 \leq t_2 \leq T$, there exists a constant $\gamma = \gamma(K, \Omega, T, C_0 ) > 0$ so that for all $x \in K$
\begin{align}\label{eq:Xinvtimecontinuity}
\begin{aligned}
\abs{X^{-1}(x,t_1) - X^{-1}(x,t_2)} & = \abs{X^{-1}(x,t_1) - X^{-1}(X^*(x, t_2, t_2 -t_1) ,t_1)} \\
& \lesssim_\Omega \abs{x - X^*(x, t_2, t_2 -t_1)}^\gamma \\
& \lesssim_{K, \Omega, T, C_0 } \abs{t_2 - t_1}^\gamma
\end{aligned}
\end{align}
Therefore from \eqref{eq:Xholdercontinuity}, \eqref{eq:Xtimediff} and \eqref{eq:Xinvtimecontinuity} we see that $X,X^{-1}:\Omega\times[0,\infty) \to \Omega$ are continuous. 

Let us now prove that $X(\cdot,t)$ is measure preserving. Note that as the velocity is only locally log-Lipschitz, the flow map $X(\cdot,t)$ is not locally Lipschitz but is only locally H\"older continuous. Let $T>0$ and let $K \subset \Omega$ be compact. Let $U_1 \subset \Omega$ be an open set such that $\Ubar_1$ is compact and $X(X^{-1}(x,t_1), t_2) \in U_1$ for all $x \in K$ and $t_1, t_2 \in [0,T]$.  Similarly let $U_2$ be an open set with $\Ubar_1 \subset U_2 \subset \Ubar_2 \subset \Omega$ with $\Ubar_2$ compact and $X(X^{-1}(x,t_1), t_2) \in U_2$ for all $x \in \Ubar_1$ and $t_1, t_2 \in [0,T]$. Let $0<\ep_0<\half$ be small enough so that $x + 2\ep_0 y \in \Omega$ for all $x \in \Ubar_2, y \in B_1(0)$. Let $\varphi$ be a smooth bump function with $\varphi \geq 0$, $\int_{\Rsp^2} \varphi(y) \diff y = 1$ and $\supp(\varphi) \subset B_1(0)$. For $x \in \Ubar_2$ and $0<\ep \leq \ep_0$ we define $u_\ep: \Ubar_2 \times [0,T] \to \Csp$ as
\begin{align*}
u_\ep(x,t) = \int_\Omega u(x - \ep y)\varphi(y) \diff y
\end{align*} 
Observe that we have the estimates
\begin{align}\label{eq:uepestimates}
\begin{aligned}
\abs{u_\ep(x,t)} & \lesssim_{\Omega, C_0 } 1  \qq \tx{ for all } (x,t) \in \Ubar_2\times [0,T] \\
\abs{u_\ep(x_1,t) - u_\ep(x_2,t)} & \lesssim_{\Omega, \Ubar_2, \ep, C_0 } \abs{x_1 - x_2} \qq \tx{ for all } x_1, x_2 \in \Ubar_2, t \in [0,T] \\
\abs{u(x,t) - u_\ep(x,t)} & \lesssim_{\Omega, \Ubar_2, C_0 } \ep^\half  \qq \tx{ for all } (x,t) \in \Ubar_2\times [0,T]
\end{aligned}
\end{align}
Now for $x \in \Ubar_1$, consider the ODE
\begin{align}\label{eq:ODEXep}
\frac{\diff X_\ep(x,t)}{\diff t} = u_\ep(X_\ep(x,t),t), \qq X_\ep(x,0) = x
\end{align}
Clearly $X_\ep(x,t) \in U_2$, at least if $t>0$ is small enough. Now for $x \in \Ubar_1$ we have
\begin{align*}
\abs{\frac{\diff  }{\diff t} (\abs{X(x,t) - X_\ep(x,t)} + \ep^\half)} & \leq \abs{u(X(x,t),t) - u(X_\ep(x,t),t)} \\
& \quad + \abs{u(X_\ep(x,t),t) - u_\ep(X_\ep(x,t),t)} \\
& \lesssim_{\Omega, \Ubar_2,  C_0 } \phi(\abs{X(x,t) - X_\ep(x,t)}) + \ep^\half \\
& \lesssim_{\Omega, \Ubar_2, C_0 } \phi(\abs{X(x,t) - X_\ep(x,t)} + \ep^\half)
\end{align*}
Hence from \lemref{lem:phi} we see that there exists $\gamma = \gamma(\Omega, \Ubar_2, C_0, T) > 0$ so that
\begin{align}\label{eq:XcloseXep}
\abs{X(x,t) - X_\ep(x,t)} \lesssim_{\Omega} \ep^\gamma
\end{align}
In particular this implies that if $\ep_0$ is small enough, then $X_\ep(x,t) \in U_2$ for all $(x,t) \in \Ubar_1\times [0,T]$. Now we can similarly define $X^*_\ep(x,t,\tau)$ where  $X^*(x,t,\tau)$ satisfies \eqref{eq:Xstarxttau}. By the same analysis as above and using the fact that $X^*(x,t,\tau) = X(X^{-1}(x,t), t - \tau)$ for all $0\leq \tau \leq t$, we then see that for $\ep_0$ small enough, we have $X^{-1}_\ep(x,t) \in U_2$ for all $(x,t) \in \Ubar_1 \times [0,T]$.  This also implies that for $\ep_0$ small enough, $X_\ep(x,t), X^{-1}_\ep(x,t) \in U_1$ for all $(x,t) \in K\times [0,T]$. 

Now from \eqref{eq:uepestimates} and \eqref{eq:ODEXep} we see that for all $x_1, x_2 \in K$ and $t \in [0,T]$
\begin{align}\label{eq:flowLipschitzep}
\abs{X_\ep(x_1,t) - X_\ep(x_2,t)}  \lesssim_{\Omega, \Ubar, \ep, C_0, T } \abs{x_1 - x_2}
\end{align}
Moreover as $\grad \cdot u(\cdot,t) = 0$ in $\Omega$ for all $t \in [0,T]$, we see that $\grad \cdot u_\ep(\cdot,t) = 0$ in $U_2$. Therefore using \eqref{eq:flowLipschitzep} this implies that $X_\ep(\cdot,t), X^{-1}_\ep(\cdot,t): U_1 \to U_2$ are measure preserving. Now let $f:\Omega \to \Rsp$ be continuous with $\supp(f) \subset K$. Then  
\begin{align*}
\int_\Omega f(x) \diff x = \int_{K} f(x) \diff x = \int_{X_\ep^{-1}(K,t)} f(X_\ep(x,t)) \diff x = \int_{U_1} f(X_\ep(x,t)) \diff x
\end{align*}
Letting $\ep \to 0$ and using the fact that $f$ is continuous and \eqref{eq:XcloseXep}, we see that 
\begin{align*}
\int_\Omega f(x) \diff x = \int_{U_1} f(X(x,t)) \diff x = \int_\Omega f(X(x,t)) \diff x
\end{align*}
As $K$ is an arbitrary compact set, this shows that $X(\cdot,t), X^{-1}(\cdot,t):\Omega \to \Omega$ are measure preserving. 

\item As $X(x,t) \in \Omega$ for all $(x,t) \in \Omega \times [0,\infty)$, from Lemma 3.1 of \cite{HaZl21} we see that $\w(X(x,t),t) = \w_0(x)$ for a.e. $(x,t) \in \Omega \times [0,\infty)$. Hence $\w(x,t) = \w_0(X^{-1}(x,t))$ for a.e. $(x,t) \in \Omega\times[0,\infty)$. We now redefine $\w$ on set of zero measure in $\Omega \times [0,\infty)$ so that $\w(x,t) = \w_0(X^{-1}(x,t))$ holds for all $(x,t) \in \Omega\times [0,\infty)$. Now redefine $u$ on $\Omega \times [0,\infty)$ by $u(\cdot,t) = \grad^\perp \psi(\cdot,t)$ for $t \in [0,\infty)$, where $\psi(\cdot,t)$ solves \eqref{eq:psisolvingPoisson}. It is now easy to see that $(u,\w)$ satisfies the stated properties. 

\item From \eqref{eq:Xinvtimecontinuity} we see that $X^{-1}(\cdot,t_n) \to X^{-1}(\cdot,t)$ uniformly on compact sets of $\Omega$. Now the statement follows from this and by an approximation argument where we approximate $\w_0$ in $L^p(\Omega)$ by smooth functions with compact support.  

\item Using \lemref{lem:btil} part (2) and the formula \eqref{eq:btil} we see that for $z_1, z_2 \in \Dspbar$ and $t_1, t_2 \in [0,\infty)$
\begin{align*}
\abs*[\big]{\btil(z_1, t_1) - \btil(z_2, t_2)} & \leq \abs*[\big]{\btil(z_1, t_1) - \btil(z_2, t_1)} + \abs*[\big]{\btil(z_2, t_1) - \btil(z_2, t_2)} \\
& \lesssim_{C_0, \Omega} \phi(\abs{z_1 - z_2}) + \norm[L^4(\Omega)]{\w(\cdot, t_1) - \w(\cdot, t_2)}
\end{align*}
Hence $\btil$ is continuous on $\Dspbar \times [0,\infty)$. As the domain is bounded by a Jordan curve, by Carath\'{e}odory's theorem we see that $\Phi: \Omega \to \Dsp$ extends continuously to $\Omegabar$. Now as $b(x,t) = \btil(\Phi(x),t)$, we therefore now see that $b$ is continuous on $\Omegabar \times [0,\infty)$. The continuity of $u$ on $\Omega \times [0,\infty)$ follows directly from \eqref{eq:X} from which we see that $u(x,t) = b(x,t)\Phizbar(x)$.  

\end{enumerate}

\end{proof}

\section{New change of variable}\label{sec:changevar}

In this section we introduce our change of variable $\F$ on $\Dsp$ and prove several important estimates for it. We now define several functions:
\begin{enumerate}[label=\alph*)]
\item Let $g:[0,1] \to \Rsp$ be a smooth increasing function satisfying 
\begin{align*}
g(x) = 
\begin{cases}
0 \qq \tx{ for } 0 \leq x \leq \frac{1}{4} \\
1 \qq \tx{ for } \half \leq x \leq 1 
\end{cases}
\end{align*}
\item Define the function $G: \Dsp \to \Rsp$ by
\begin{align}\label{def:G}
G(z) = g(\abs{z})\abs{\Psi_z(z)}^2 + (1 - g(\abs{z})). 
\end{align}
$G$ is a smooth function on $\Dsp$ with $G(z) = 1$ for $\abs{z} \leq 1/4$ and $G(z) = \abs{\Psi_z(z)}^2$ for $1/2 \leq \abs{z} < 1$. Now from the first condition of \assref{ass:main} we know that $c_0 > 0$. If $c_0 \geq 1$ then $G(z) \geq 1$ for all $z \in \Dsp$. If $0<c_0 < 1$, then we have
\begin{align*}
G(z) \geq c_0^2\, g(\abs{z}) + (1 - g(\abs{z})) = c_0^2 + (1 - c_0^2)(1 - g(\abs{z})) \geq c_0^2
\end{align*}
Hence we see that for all $z \in \Dsp$ we have
\begin{align}\label{eq:Gbounds}
0< \min\cbrac{1, c_0^2} \leq G(z) \leq c_1^2 + 1 
\end{align}
\item Define $c:(0,1) \to \Rsp$ by 
\begin{align}\label{def:c}
c(r) = \frac{1}{2\pi} \int_0^{2\pi} G(re^{is}) \diff s
\end{align}
Clearly $c$ is smooth and from \eqref{eq:Gbounds} we see that for all $r \in (0,1)$ we have
\begin{align}\label{eq:cbounds}
0< \min\cbrac{1, c_0^2} \leq c(r) \leq c_1^2 + 1 
\end{align}
We also see that $c(r) = 1$ for $0< r \leq 1/4$.
\item Define $L: (0,1) \times \Rsp \to \Rsp$ by 
\begin{align}\label{def:L}
L(r,\theta) = \frac{1}{c(r)} \int_0^\theta G(re^{is}) \diff s
\end{align}
From  $\eqref{eq:cbounds}$ we see that $L$ is well defined. Observe that $L$ is smooth and for any fixed $r$, $L(r, \cdot)$ is a strictly increasing function. It is also easy to see that 
\begin{align}\label{eq:Lprop}
\begin{aligned}
L(r,0) & = 0 \qq \tx{ for all } r \in (0,1) \\
L(r, \theta + 2\pi) & = L(r,\theta) + 2\pi \qq \tx{ for all } (r,\theta) \in (0,1)\times \Rsp \\
L(r, \theta) & = \theta \qq \tx{ for all } (r,\theta) \in (0,1/4] \times \Rsp
\end{aligned}
\end{align}
\item We now define our change of variable $F:\Dsp \to \Dsp$ by 
\begin{align}\label{def:F}
F(re^{i\theta}) = 
\begin{cases}
re^{iL(r,\theta)} & \tx{ for } 0<r<1  \\
0 & \tx{ for } r = 0
\end{cases}
\end{align}
From \eqref{eq:Lprop} we see that $e^{iL(r,\theta)}$ is $2\pi$ periodic in the $\theta$ variable and hence $F$ is well defined. We also note that $F(z) = z$ for all $\abs{z} \leq 1/4$ and for $z \in [0,1)$. As $L $ is smooth, we see that $F$ is also a smooth function. 
\item We also need some additional functions. Define $a,b : \Dsp \to \Rsp$ by
\begin{align}\label{def:ab}
a(z) = \Real(\Psi_z(z)) \qq b(z) = \Imag(\Psi_z(z))
\end{align}
Now observe that as $\Psi_z : \Dsp \to \Csp$ is a bounded holomorphic function, it has a non-tangential limit at the boundary almost everywhere (see Theorem 1.3 of \cite{Du70}). Let $a_1, b_1: \Sone \to \Rsp$ be the boundary values of $a$ and $b$ respectively. Therefore $a_1, b_1 \in \Linfty(\Sone)$ with
\begin{align}\label{eq:aonebound}
\norm[L^\infty(\Sone)]{a_1}, \norm[L^\infty(\Sone)]{b_1}, \norm[L^\infty(\Omega)]{a}, \norm[L^\infty(\Omega)]{b} \leq c_1
\end{align}
and we have 
\begin{align}\label{eq:afromaone}
a(r,\cdot) = P_r\conv a_1 \qq b(r,\cdot) = P_r\conv b_1 \qq \tx{ for all } 0\leq r< 1
\end{align}
As $\Psi_z$ is holomorphic, we see that 
\begin{align*}
\Hil(a_1) = b_1 - \Avg(b_1)
\end{align*}
As explained in \secref{sec:notation} we can view $a, b: [0,1] \times \Rsp \to \Rsp$ as $2\pi$ periodic functions in the second variable. From this we see that for all $r \in [0,1]$ we have
\begin{align}\label{eq:Hila}
\Hil(a(r, \cdot)) = b(r, \cdot) - \Avg(b(r,\cdot))
\end{align}
By abuse of notation we will write the above equality as $\Hil a = b - \Avg(b)$. Taking its $\partial_s$ derivative we get for all $r \in [0,1)$
\begin{align}\label{eq:bs}
\psabs(a(r, \cdot)) = b_s(r, \cdot)
\end{align}
Again abusing notation, we will write the above equality as $\psabs a = b_s$. From condition (2) of \assref{ass:main} we see that 
\begin{align}\label{eq:abHhalf}
\sup_{r \in [0,1)} \norm[\Hhalf(\Sone)]{a(r, \cdot)}, \sup_{r \in [0,1)} \norm[\Hhalf(\Sone)]{b(r, \cdot)} \leq c_2
\end{align}
Now we know that $\Psi_z = a + ib$ is a bounded holomorphic function. As $\Psi_z^2 = a^2 - b^2 + 2iab$, we see that $\Hil(a^2 - b^2)$ is a bounded function (here we have again abused notation similarly as above). From condition (3) in  \assref{ass:main} we see that $\Hil(a^2 + b^2)$ is a bounded function. Hence we see that 
\begin{align}\label{eq:aHilLinfty}
\sup_{r \in [0,1)} \norm[\Linfty(\Sone)]{\Hil(a^2(r, \cdot))} \lesssim_\Omega 1
\end{align}
\end{enumerate}

Now we introduce polar coordinates
\begin{align*}
x & = r\cos(\theta)   & \hat{r} & = \frac{z}{\abs{z}} = e^{i\theta} \\
y & = r\sin(\theta)   & \hat{\theta} & = i\frac{z}{\abs{z}} = ie^{i\theta}
\end{align*}
We also have
\begin{align*}
\grad & = \hat{r} \frac{\partial }{\partial r} + \hat{\theta} \frac{1}{r} \frac{\partial }{\partial \theta} = \frac{\partial }{\partial x} + i\frac{\partial}{\partial y} \\
\frac{\partial }{\partial r} & = \cos(\theta)\frac{\partial }{\partial x} + \sin(\theta) \frac{\partial }{\partial y} \\
\frac{1}{r}\frac{\partial }{\partial \theta} & = -\sin(\theta)\frac{\partial }{\partial x} + \cos(\theta) \frac{\partial }{\partial y}
\end{align*}
Now as $a,b$ are the real and imaginary parts of a holomorphic function, we get the Cauchy Riemann equations
\begin{align*}
a_x = b_y \quad \tx{ and } \quad  a_y = - b_x
\end{align*} 
Writing these equations in polar coordinates we get for $0< r <1$
\begin{align}\label{eq:CauchyRiempolar}
a_r = \frac{b_\theta}{r}  \quad \tx{ and } \quad b_r = -\frac{a_\theta}{r} 
\end{align}

We now prove some estimates for the functions introduced above. The important estimates are \lemref{lem:Fbilipschitz} and \lemref{lem:FrFthetaestimates}, which are crucially used in the energy estimate in the next section. The other lemmas proved here are used to prove these two lemmas. 
\begin{lemma}\label{lem:Grestimate}
For $(r,\theta) \in (0,1) \times [0,2 \pi]$ we have
\begin{align*}
\abs{\int_0^\theta G_r(r,s) \diff s} \lesssim_\Omega 1
\end{align*}
\end{lemma}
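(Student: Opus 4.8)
\emph{Reduction.} In polar coordinates $G(r,\theta)=g(r)\abs{\Psiz(r,\theta)}^2+(1-g(r))$, so $G_r=g'(r)(\abs{\Psiz}^2-1)+g(r)\,\partial_r\abs{\Psiz}^2$, and since $g\equiv g'\equiv 0$ on $[0,\tfrac14]$ we may take $r\in(\tfrac14,1)$. As $g'$ is a fixed bounded function and $\abs{\Psiz}\le c_1$ by \assref{ass:main}(1), the term $g'(r)\int_0^\theta(\abs{\Psiz}^2-1)\diff s$ is $\lesssim_\Omega 1$; since $0\le g\le 1$ and $\int_0^\theta G_r\diff s=\partial_r\int_0^\theta G\diff s$, the lemma reduces to showing
\[
\abs*[\Big]{\partial_r\int_0^\theta\abs{\Psiz(re^{is})}^2\diff s}\ \lesssim_\Omega\ \frac1r\qquad\text{for }r\in(\tfrac14,1),\ \theta\in[0,2\pi].
\]

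\emph{Comparison with the harmonic majorant.} Let $w$ be the harmonic extension to $\Dsp$ of the ($L^\infty$) boundary values of $\abs{\Psiz}^2$, i.e.\ $w(r,\cdot)=P_r\conv(a_1^2+b_1^2)$, and set $u:=w-\abs{\Psiz}^2$. Since $\abs{\Psiz}^2$ is subharmonic with bounded boundary values, $u\ge 0$ on $\Dsp$ and $u\to 0$ nontangentially a.e.\ on $\Sone$; and since $\Psiz$ is holomorphic, $\Delta\abs{\Psiz}^2=4\abs{\Psiz'}^2$, so $-\Delta u=4\abs{\Psiz'}^2$ and, by the Riesz representation of nonnegative superharmonic functions on the disc, $u=\int_\Dsp(-G_\Dsp(\cdot,\zeta))\,4\abs{\Psiz'(\zeta)}^2\diff A(\zeta)$ is the Green potential of $4\abs{\Psiz'}^2\diff A$, whose total mass is $\int_\Dsp 4\abs{\Psiz'}^2\diff A=4\pi\sum_{n\ge1}n\abs{c_n}^2\le 4\pi c_2^2$ by \assref{ass:main}(2) (writing $\Psiz=\sum_{n\ge 0}c_nz^n$). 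On the other hand $w$ is bounded by $c_1^2$, and by \assref{ass:main}(3) (letting $r\to1$ and using the $L^p$-continuity of $\Hil$) the conjugate function $\Hil(a_1^2+b_1^2)$ is bounded too; hence $w=\Real\Omega$ for a holomorphic $\Omega$ on $\Dsp$ with $\norm[\Linfty]{\Omega}\lesssim_\Omega 1$. This is the essential use of condition (3): without it one obtains only $w\in\Hhalf$, which is too weak.

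\emph{The two pieces.} Split $\partial_r\int_0^\theta\abs{\Psiz(re^{is})}^2\diff s=\partial_r\int_0^\theta w(re^{is})\diff s-\partial_r\int_0^\theta u(re^{is})\diff s$. For the first, $\partial_r w(re^{is})=\Real(e^{is}\Omega'(re^{is}))$, and parametrising $z=re^{is}$ over the arc $s\in[0,\theta]$ gives $\int_0^\theta e^{is}\Omega'(re^{is})\diff s=\tfrac1{ir}(\Omega(re^{i\theta})-\Omega(r))$, so $\partial_r\int_0^\theta w(re^{is})\diff s=\tfrac1r\Imag(\Omega(re^{i\theta})-\Omega(r))$, which is $\le 2\norm[\Linfty]{\Omega}/r\lesssim_\Omega 1/r$. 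For the second, interchanging $\partial_r$ with the integrals,
\[
\partial_r\int_0^\theta u(re^{is})\diff s=-\int_\Dsp\Bigl(\int_0^\theta\partial_r G_\Dsp(re^{is},\zeta)\diff s\Bigr)4\abs{\Psiz'(\zeta)}^2\diff A(\zeta),
\]
so together with the mass bound it suffices to prove $\sup_{\zeta\in\Dsp}\abs{\int_0^\theta\partial_r G_\Dsp(re^{is},\zeta)\diff s}\lesssim\tfrac1r$.

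\emph{The kernel estimate, and the main obstacle.} With $G_\Dsp(z,\zeta)=\tfrac1{2\pi}(\ln\abs{z-\zeta}-\ln\abs{1-z\bar\zeta})$, each logarithm is the real part of a function holomorphic in $z$ near the arc $\{re^{is}:0\le s\le\theta\}$, so the same contour computation as above gives $\int_0^\theta\partial_r\ln\abs{re^{is}-\zeta}\diff s=\tfrac1r(\arg(re^{i\theta}-\zeta)-\arg(r-\zeta))$, and likewise with $1-z\bar\zeta$ in place of $z-\zeta$. The net change of $\arg(z-\zeta)$ along a sub-arc of $\{\abs z=r\}$ is, in absolute value, at most the total variation of $\arg(z-\zeta)$ over the whole circle, which is a universal constant (at most $3\pi$) regardless of whether $\zeta$ lies inside, on, or outside that circle; and $\arg(1-z\bar\zeta)$ even stays in $(-\tfrac\pi2,\tfrac\pi2)$ since $1-z\bar\zeta$ lies in the half-plane $\{\Real>1-r>0\}$. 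Hence $\abs{\int_0^\theta\partial_r G_\Dsp(re^{is},\zeta)\diff s}\le\tfrac2r$ uniformly in $\zeta$, which finishes the proof. The one genuinely substantive step is the decomposition $\abs{\Psiz}^2=\Real\Omega-u$ with $\Omega\in H^\infty$ (forced by condition (3)) and $u$ a Green potential of finite mass (controlled by condition (2)); it is this that turns the dangerous quantity $\partial_r\int_0^\theta\abs{\Psiz}^2\diff s$ into something manifestly bounded, while the contour integral and the total-variation bound for $\arg$ are elementary.
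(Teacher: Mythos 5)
Your argument is correct and reaches the stated bound, but by a genuinely different route from the paper's. The paper stays one--dimensional: writing $\Psi_z=a+ib$, it uses the polar Cauchy--Riemann equations to convert $\int_0^\theta G_r\,\diff s$ into $\int_0^\theta(ab_s-ba_s)\,\diff s$, and then the pointwise identity of \lemref{lem:Hhalfformula} for $a\,\abs{\partial_s}a$ turns this into the $\Hhalf$-type double integral (controlled by condition (2) of \assref{ass:main}) plus $\Hil(a^2)$ evaluated at two points (controlled by condition (3) together with the boundedness of $\Hil(a^2-b^2)$, which comes from $\Psi_z^2$ being bounded holomorphic) plus harmless boundary terms. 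You instead decompose $\abs{\Psi_z}^2=\Real W-u$ with $W$ a bounded holomorphic function (conditions (1) and (3) bound its real part and its conjugate, respectively) and $u$ the Green potential of $4\abs{\Psi_{zz}}^2\,\diff A$, whose total mass is precisely the Dirichlet-type quantity that condition (2) controls; the estimate then reduces to the contour identity $\int_0^\theta e^{is}h'(re^{is})\diff s=\tfrac{1}{ir}\brac{h(re^{i\theta})-h(r)}$ and the elementary bound on the variation of $\arg(z-\zeta)$ and $\arg(1-z\bar\zeta)$ along circular arcs. The two proofs consume the hypotheses in parallel ways (your potential mass is comparable to the paper's double integral, and your bounded conjugate plays the role of $\Hil(a^2)$), but yours is more potential-theoretic and cleanly separates the roles of (2) and (3), at the price of invoking the least-harmonic-majorant/Riesz representation (you should note explicitly that the greatest harmonic minorant of $u$ vanishes, which follows since $u$ is bounded, nonnegative and has nontangential limit $0$ a.e.) and of glossing some routine care: the interchange of $\partial_r$ with the $\zeta$-integral (the uniform Lipschitz bound $2/r$ on the inner kernel plus finiteness of the mass justifies it), the measure-zero set $\abs{\zeta}=r$ where the branch-of-argument computation degenerates, and the fact that the exact constant in the mass bound depends on the Fourier normalization (irrelevant). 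Also rename your bounded holomorphic function, since $\Omega$ already denotes the domain.
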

\begin{proof}
From the definition of $G$ in \eqref{def:G}  and $a,b$ in \eqref{def:ab}, we see that for $(r,s) \in (0,1)\times \Rsp$ we have
\begin{align*}
G(r,s) = g(r)(a^2(r,s) + b^2(r,s)) + (1 - g(r))
\end{align*}
From \eqref{eq:CauchyRiempolar} we have
\begin{align}\label{eq:Grequations}
\begin{aligned}
G_r & = g'(r)(a^2 + b^2 - 1) + 2g(r)(a a_r + b b_r)  \\
& = g'(r)(a^2 + b^2 - 1) + 2\frac{g(r)}{r}(a b_s - b a_s) 
\end{aligned}
\end{align}
Now from \eqref{eq:aonebound} we see that
\begin{align*}
\abs{ \int_0^\theta g'(r)(a^2(r,s) + b^2(r,s) - 1)\diff s} \lesssim_\Omega 1 \qq \tx{ for all } (r,\theta) \in (0,1) \times [0,2 \pi]
\end{align*}
Using \eqref{eq:bs} we obtain
\begin{align*}
& \int_0^\theta (a b_s - b a_s)(r,s) \diff s \\
& = 2\int_0^\theta (a b_s)(r,s) \diff s - \cbrac{a(r,\theta)b(r, \theta) - a(r, 0)b(r, 0)} \\
& = 2\int_0^\theta (a \psabs a)(r,s) \diff s - \cbrac{a(r,\theta)b(r, \theta) - a(r, 0)b(r, 0)}
\end{align*}
We can now utilize \eqref{eq:fabsfform} in \lemref{lem:Hhalfformula} which gives a useful representation of $a \psabs a$. We get
\begin{align*}
& \int_0^\theta (a b_s - b a_s)(r,s) \diff s \\
& = \frac{1}{4\pi}\int_0^\theta \int_0^{2\pi} \frac{(a(r,s) - a(r,\bp))^2}{\sin^2\brac{\frac{s - \bp}{2}}} \diff \bp \diff s + \cbrac{(\Hil(a^2))(r, \theta) - (\Hil(a^2))(r, 0)} \\
& \quad  - \cbrac{a(r,\theta)b(r, \theta) - a(r, 0)b(r, 0)}
\end{align*}
Hence from \eqref{eq:fhalf2normform} of \lemref{lem:Hhalfformula}, along with  \eqref{eq:abHhalf}, \eqref{eq:aHilLinfty} and \eqref{eq:aonebound} we see that 
\begin{align}\label{eq:absminusbasint}
\abs{\int_0^\theta (a b_s - b a_s)(r,s) \diff s} \lesssim_\Omega 1  \qq \tx{ for all } (r,\theta) \in (0,1) \times [0,2 \pi]
\end{align}
Therefore proved. 
\end{proof}

\begin{lemma}
We have the following estimates:
\begin{enumerate}
\item For $r \in (0,1)$ we have
\begin{align}\label{eq:cderivest}
\abs{c'(r)} \lesssim_\Omega 1
\end{align}
\item For $(r, \theta) \in (0,1) \times [0,2\pi]$ we have
\begin{align}\label{eq:LrLthetaest}
\abs{L_\theta(r, \theta)} + \abs{L_r(r, \theta)} + \abs{F_\theta(r, \theta)} + \abs{F_r(r, \theta)} \lesssim_\Omega 1
\end{align}
\end{enumerate}
\end{lemma}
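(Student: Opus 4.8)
The plan is to reduce both estimates to the already-established bound \lemref{lem:Grestimate} together with the pointwise bounds \eqref{eq:Gbounds} on $G$ and \eqref{eq:cbounds} on $c$; no new analytic input is needed, only bookkeeping with the differentiation formulas.

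\emph{Estimate for $c'$.} Differentiating \eqref{def:c} under the integral sign gives $c'(r) = \frac{1}{2\pi}\int_0^{2\pi} G_r(r,s)\diff s$, which is precisely the quantity controlled in \lemref{lem:Grestimate} evaluated at $\theta = 2\pi$; hence $\abs{c'(r)} \lesssim_\Omega 1$, proving \eqref{eq:cderivest}.

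\emph{Estimates for $L$.} From \eqref{def:L} we compute
\begin{align*}
L_\theta(r,\theta) = \frac{G(re^{i\theta})}{c(r)}, \qquad L_r(r,\theta) = -\frac{c'(r)}{c(r)^2}\int_0^\theta G(re^{is})\diff s + \frac{1}{c(r)}\int_0^\theta G_r(r,s)\diff s.
\end{align*}
The bound on $L_\theta$ is immediate from the upper bound on $G$ in \eqref{eq:Gbounds} and the lower bound on $c$ in \eqref{eq:cbounds}. For $L_r$ I would combine four ingredients: the bound $\abs{c'(r)}\lesssim_\Omega 1$ just proved, the lower bound $c(r)\geq \min\{1,c_0^2\}$ from \eqref{eq:cbounds}, the crude estimate $\abs{\int_0^\theta G(re^{is})\diff s}\leq 2\pi(c_1^2+1)$ from \eqref{eq:Gbounds}, and \lemref{lem:Grestimate} for the last term. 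This yields $\abs{L_r(r,\theta)}\lesssim_\Omega 1$.

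\emph{Estimates for $F$.} For $0<r<1$ we have $F(re^{i\theta}) = re^{iL(r,\theta)}$, so $F_\theta = i r L_\theta e^{iL(r,\theta)}$ and $F_r = (1 + i r L_r)e^{iL(r,\theta)}$. Therefore $\abs{F_\theta} = r\abs{L_\theta}\leq \abs{L_\theta}$ and $\abs{F_r}\leq 1 + r\abs{L_r}\leq 1 + \abs{L_r}$, and both are $\lesssim_\Omega 1$ by the bounds on $L_\theta$ and $L_r$ above. The only point requiring care is that the term in $L_r$ coming from the $r$-dependence of the normalizing factor $1/c(r)$ forces us to use \lemref{lem:Grestimate} (which encodes conditions (2)–(3) of \assref{ass:main}), rather than merely the pointwise bound \eqref{eq:Gbounds}; beyond that the proof is routine.
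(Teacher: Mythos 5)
Your proposal is correct and follows essentially the same route as the paper: differentiate $c$ under the integral sign and invoke \lemref{lem:Grestimate}, bound $L_\theta = G/c$ and $L_r$ via \eqref{eq:Gbounds}, \eqref{eq:cbounds}, \eqref{eq:cderivest} and \lemref{lem:Grestimate}, then transfer the bounds to $F_\theta$ and $F_r$ through the formulas $F_\theta = ire^{iL}L_\theta$ and $F_r = (1+irL_r)e^{iL}$. No gaps; this matches the paper's argument.
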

\begin{proof}
From the definition of $c$ in \eqref{def:c} and \lemref{lem:Grestimate} we easily get that $\abs{c'(r)} \lesssim_\Omega 1$ for all $r \in (0,1)$. Now from the definition of $L$ in \eqref{def:L} and the estimates \eqref{eq:Gbounds}, \eqref{eq:cbounds} we see that
\begin{align*}
\abs{L_\theta(r,\theta)} = \abs{\frac{G(r,\theta)}{c(r)}} \lesssim_\Omega 1 \qq \tx{ for all } (r, \theta) \in (0,1) \times [0,2\pi]
\end{align*}
We also observe that for $0< r < 1$
\begin{align*}
L_r(r, \theta) = - \frac{c'(r)}{c(r)^2} \int_0^\theta G(r,s) \diff s + \frac{1}{c(r)} \int_0^\theta G_r(r,s) \diff s 
\end{align*}
Hence from \eqref{eq:cderivest}, \eqref{eq:Gbounds}, \eqref{eq:cbounds} and \lemref{lem:Grestimate} we get the required estimate for $\abs{L_r(r, \theta)}$. To get the estimates for the derivatives of $F$, observe that for $0<r<1$ we have
\begin{align}\label{eq:FthetaFr}
\begin{aligned}
F_\theta(r,\theta) & = ire^{iL(r,\theta)}L_\theta(r,\theta) \\
F_r(r, \theta) & = e^{iL(r, \theta)} + ire^{iL(r,\theta)}L_r(r,\theta)
\end{aligned}
\end{align}
The estimates for $F_\theta(r,\theta)$ and $F_r(r,\theta)$ now follow easily. 
\end{proof}

The following lemma shows that $F:\Dsp \to \Dsp$ is bi-Lipschitz and this fact is crucially used in the next section to prove the energy estimate. 

\begin{lemma}\label{lem:Fbilipschitz}
For all $x, y \in \Dsp$ we have
\begin{align*}
\abs{x- y} \lesssim_\Omega \abs{F(x) - F(y)} \lesssim_\Omega \abs{x - y}
\end{align*}
\end{lemma}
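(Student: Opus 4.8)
The proof splits into the (routine) upper bound and the (more substantial) lower bound.

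\textbf{Upper bound.} Since $\Dsp$ is convex it suffices to bound the Jacobian of $F$ uniformly on $\Dsp$. In the orthonormal polar frame $\{\hat r,\hat\theta\}$ the two columns of $DF$ are $\partial_r F = F_r$ and $\frac1r\partial_\theta F = ie^{iL(r,\theta)}L_\theta(r,\theta)$ by \eqref{eq:FthetaFr}, and by \eqref{eq:LrLthetaest} together with $L_\theta = G/c$ both have modulus $\lesssim_\Omega 1$; moreover $F$ is the identity on $\abs z\le 1/4$ by \eqref{eq:Lprop} and \eqref{def:F}. Hence $\abs{DF(z)}\lesssim_\Omega 1$ on all of $\Dsp$, and integrating $DF$ along the segment $[x,y]\subset\Dsp$ gives $\abs{F(x)-F(y)}\lesssim_\Omega\abs{x-y}$.

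\textbf{Reduction of the lower bound.} Write $x = r_1e^{i\theta_1}$, $y = r_2e^{i\theta_2}$; the cases $x=0$ or $y=0$ are immediate since $F$ preserves moduli, so assume $r_1,r_2>0$ and choose the arguments so that $d := \abs{\theta_1-\theta_2}\le\pi$. Using $r_1^2+r_2^2-2r_1r_2\cos\alpha = (r_1-r_2)^2+4r_1r_2\sin^2(\alpha/2)$, the elementary bounds $\tfrac2\pi\abs t\le 2\abs{\sin(t/2)}\le\abs t$ for $\abs t\le\pi$, and the fact that $\abs{\sin(\tfrac12(L(r_1,\theta_1)-L(r_2,\theta_2)))} = \sin(\tfrac12 d_2)$ where $d_2 := d_{\Sone}\big(e^{iL(r_1,\theta_1)},e^{iL(r_2,\theta_2)}\big)\in[0,\pi]$ is the geodesic distance on $\Sone$, one obtains
\begin{align*}
\abs{x-y} &\approx \abs{r_1-r_2} + \sqrt{r_1r_2}\,d, \\
\abs{F(x)-F(y)} &\gtrsim \abs{r_1-r_2} + \sqrt{r_1r_2}\,d_2 .
\end{align*}
So it remains only to prove $\sqrt{r_1r_2}\,d\lesssim_\Omega\abs{F(x)-F(y)}$.

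\textbf{The core estimate.} First, for fixed $r$ the circle map $e^{i\theta}\mapsto e^{iL(r,\theta)}$ is bi-Lipschitz on $\Sone$ with constants depending only on $\Omega$: since $G>0$ and $\frac1{c(r)}\int_0^{2\pi}G(r,s)\,ds = 2\pi$, the number $L(r,\theta_1)-L(r,\theta_2) = \frac1{c(r)}\int_{\theta_2}^{\theta_1}G$ (say $\theta_1\ge\theta_2$) lies strictly in $(0,2\pi)$, so the geodesic distance between the images equals $\min\big(\frac1{c(r)}\int_{\theta_2}^{\theta_1}G,\ \frac1{c(r)}\int_{\theta_1}^{\theta_2+2\pi}G\big)$; by \eqref{eq:Gbounds} and \eqref{eq:cbounds} the integrand is $\ge \min\{1,c_0^2\}/(c_1^2+1)$, while the two arcs have lengths $d$ and $2\pi-d\ge\pi\ge d$, which gives $d_{\Sone}(e^{iL(r,\theta_1)},e^{iL(r,\theta_2)})\gtrsim_\Omega d$ (the matching upper bound is immediate from $\abs{L_\theta}\lesssim_\Omega 1$). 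Applying this at $r=r_1$ and the triangle inequality on $(\Sone,d_{\Sone})$,
\begin{align*}
d_2 \ \ge\ d_{\Sone}\big(e^{iL(r_1,\theta_1)},e^{iL(r_1,\theta_2)}\big) - d_{\Sone}\big(e^{iL(r_1,\theta_2)},e^{iL(r_2,\theta_2)}\big) \ \gtrsim_\Omega\ d - C_\Omega\abs{r_1-r_2},
\end{align*}
where the subtracted term is $\le\abs{L(r_1,\theta_2)-L(r_2,\theta_2)}\le\abs{r_1-r_2}\,\sup\abs{L_r}\lesssim_\Omega\abs{r_1-r_2}$ by \eqref{eq:LrLthetaest} and the $2\pi$-periodicity of $L_r$ in $\theta$. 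Multiplying by $\sqrt{r_1r_2}\le1$ and rearranging, $\sqrt{r_1r_2}\,d\lesssim_\Omega\sqrt{r_1r_2}\,d_2+\abs{r_1-r_2}$, and both summands on the right are $\lesssim_\Omega\abs{F(x)-F(y)}$ by the second displayed estimate above. Together with $\abs{r_1-r_2}\le\abs{F(x)-F(y)}$ this yields $\abs{x-y}\lesssim_\Omega\abs{F(x)-F(y)}$.

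\textbf{Main obstacle.} The only genuinely delicate points are the wrap-around on $\Sone$, handled by consistently working with the geodesic metric $d_{\Sone}$ and the strict inclusion $0<L(r,\theta_1)-L(r,\theta_2)<2\pi$ (which is precisely where the lower bound $G\gtrsim_\Omega 1$ is used, to control the mass on the complementary arc), and the scenario where $\sqrt{r_1r_2}$ is tiny while $\abs{r_1-r_2}$ is not, which defeats any attempt to simply cancel $\sqrt{r_1r_2}$; this is why one must retain the $\abs{r_1-r_2}$ term and use the two separate lower bounds $\abs{F(x)-F(y)}\gtrsim\abs{r_1-r_2}$ and $\abs{F(x)-F(y)}\gtrsim\sqrt{r_1r_2}\,d_2$ in tandem.
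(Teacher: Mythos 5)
Your proof is correct, but it reaches the lower bound by a genuinely different route than the paper. The paper's argument is a short Jacobian computation: since $F$ is a smooth homeomorphism of $\Dsp$ with $\abs{\grad F}\lesssim_\Omega 1$, and $\abs{\det(J_F)}(r,\theta)=L_\theta(r,\theta)=G(r,\theta)/c(r)\approx_\Omega 1$ by \eqref{eq:Gbounds} and \eqref{eq:cbounds}, the two-dimensional adjugate identity gives $\abs{\grad F^{-1}}\lesssim_\Omega 1$, and the lower bound follows by applying the Lipschitz bound to $F^{-1}$. You instead work entirely in polar coordinates without ever inverting $F$: you exploit that $F$ preserves moduli to reduce matters to the angular components, prove that each circle map $\theta\mapsto L(r,\theta)$ is bi-Lipschitz for the geodesic metric on $\Sone$ (this is where $G/c\gtrsim_\Omega 1$ enters, playing exactly the role the Jacobian lower bound plays in the paper), and couple different radii through $\abs{L_r}\lesssim_\Omega 1$ from \eqref{eq:LrLthetaest}. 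What the paper's route buys is brevity and no case analysis; what yours buys is explicitness -- it avoids the (implicit in the paper) step of passing from a pointwise bound on $\grad F^{-1}$ to a global Lipschitz bound for $F^{-1}$ on the convex domain $\Dsp$, it makes the wrap-around issue on $\Sone$ and the degenerate regime where $\sqrt{r_1r_2}$ is small completely transparent, and it isolates precisely which structural facts ($G/c\approx_\Omega 1$, $\abs{L_r}\lesssim_\Omega 1$, modulus preservation) drive the bi-Lipschitz property. Both arguments ultimately consume the same two estimates, so your proof is a legitimate, self-contained alternative.
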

\begin{proof}
As $F(z) = z$ for $\abs{z} \leq 1/4$ and from \eqref{eq:LrLthetaest} we see that $\abs{\grad F(z)} \lesssim_\Omega 1$ for all $z \in \Dsp$. Hence $\abs{F(x) - F(y)} \lesssim_\Omega \abs{x - y}$ for all $x,y \in \Dsp$. From the definition of $F$ it is also clear that $F$ is a smooth homeomorphism on $\Dsp$. Let $J_F$ be the Jacobian of the map $F$ and so we have for $0<\abs{z} < 1$
\begin{align*}
\det(J_F)(z) = 
\begin{vmatrix}
\Real(F_x) & \Real(F_y) \\
\Imag(F_x) & \Imag(F_y)
\end{vmatrix}
= \Imag(\overline{F_x} F_y) = \frac{1}{r} \Imag(\overline{F_r} F_\theta)
\end{align*}
Now using \eqref{eq:FthetaFr} and the estimates \eqref{eq:Gbounds} and \eqref{eq:cbounds} we see that 
\begin{align*}
\abs{\det(J_F)}(r, \theta) = L_\theta(r, \theta) = \frac{G(r, \theta)}{c(r)} \approx_\Omega 1 \qq \tx{ for all } (r, \theta) \in (0,1) \times [0,2\pi]
\end{align*}
As $F(z) = z$ for $\abs{z} \leq 1/4$, this implies that $\abs{\grad F^{-1}(z)} \lesssim_\Omega 1$ for all $z \in \Dsp$. Hence  $\abs{x- y} \lesssim_\Omega \abs{F(x) - F(y)}$ for all $x, y \in \Dsp$. 

\end{proof}

\begin{lemma}\label{lem:athetaLinftyLtwo}
For $r \in (\frac{1}{100}, 1)$ we have
\begin{align*}
\norm[\Linfty(\Sone)]{a_\theta(r, \cdot)}, \norm[\Linfty(\Sone)]{b_\theta(r, \cdot)}, \norm[\Ltwo(\Sone)]{a_\theta(r, \cdot)}^2, \norm[\Ltwo(\Sone)]{b_\theta(r, \cdot)}^2 \lesssim_\Omega \frac{1}{1-r}
\end{align*}
\end{lemma}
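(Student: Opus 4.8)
The plan is to argue entirely on the Fourier side, combining the harmonic extension formula \eqref{eq:afromaone} with the uniform $\Hhalf$ bound \eqref{eq:abHhalf} (i.e.\ condition (2) of \assref{ass:main}). Since $a(r,\cdot) = P_r \conv a_1$ and the Poisson kernel \eqref{eq:Poisson} satisfies $\widehat{P_r}(n) = r^{|n|}$, we have $\widehat{a(r,\cdot)}(n) = r^{|n|}\widehat{a_1}(n)$ for every $n \in \Zsp$, and hence $\widehat{a_\theta(r,\cdot)}(n) = in\, r^{|n|}\widehat{a_1}(n)$ (and likewise for $b$, with $b_1$ in place of $a_1$). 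By \eqref{eq:aonebound} we have $a_1, b_1 \in \Linfty(\Sone) \subset \Ltwo(\Sone)$, so $\widehat{a_1}, \widehat{b_1} \in \ell^2$ and, thanks to the factor $r^{|n|}$, all the series below converge absolutely.

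Next I would record three elementary facts: (i) $\sum_{n\in\Zsp}|n|\,r^{|n|} = \frac{2r}{(1-r)^2}$; (ii) $\sup_{m\geq 0} m\,r^m \leq \frac{1}{e\ln(1/r)} \leq \frac{1}{e(1-r)}$, the second inequality using $\ln(1/r) \geq 1-r$ for $r \in (0,1)$; and (iii), the only point where the boundary hypothesis is used, substituting $\rho = \sqrt{r} \in [0,1)$,
\begin{align*}
\sum_{n\in\Zsp} |n|\,r^{|n|}\,\abs{\widehat{a_1}(n)}^2 = \sum_{n\in\Zsp} |n|\,(\sqrt{r})^{2|n|}\,\abs{\widehat{a_1}(n)}^2 = \frac{1}{2\pi}\norm[\Hhalf(\Sone)]{a(\sqrt{r},\cdot)}^2 \leq \frac{c_2^2}{2\pi}
\end{align*}
by \eqref{eq:abHhalf}, and the same bound with $b_1$ in place of $a_1$.

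The two claimed estimates are then immediate. For the $\Linfty$ bound, by absolute convergence and Cauchy--Schwarz together with (i) and (iii),
\begin{align*}
\norm[\Linfty(\Sone)]{a_\theta(r,\cdot)} \leq \sum_{n\in\Zsp} |n|\,r^{|n|}\,\abs{\widehat{a_1}(n)} \leq \Big(\sum_{n\in\Zsp} |n|\,r^{|n|}\Big)^{1/2}\Big(\sum_{n\in\Zsp} |n|\,r^{|n|}\,\abs{\widehat{a_1}(n)}^2\Big)^{1/2} \lesssim_\Omega \frac{1}{1-r}.
\end{align*}
For the $\Ltwo$ bound, writing $n^2 r^{2|n|} = (|n|\,r^{|n|})(|n|\,r^{|n|})$ and pulling out the supremum using (ii) and (iii),
\begin{align*}
\norm[\Ltwo(\Sone)]{a_\theta(r,\cdot)}^2 = 2\pi\sum_{n\in\Zsp} n^2 r^{2|n|}\,\abs{\widehat{a_1}(n)}^2 \leq 2\pi\Big(\sup_{m\geq 0} m\,r^m\Big)\sum_{n\in\Zsp} |n|\,r^{|n|}\,\abs{\widehat{a_1}(n)}^2 \lesssim_\Omega \frac{1}{1-r}.
\end{align*}
The estimates for $b_\theta$ follow verbatim using \eqref{eq:abHhalf} for $b$. (The restriction $r \in (\frac{1}{100},1)$ is not actually needed; the argument is valid for all $r \in (0,1)$. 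Alternatively, the two $\Linfty$ bounds follow from the Cauchy estimate $\abs{\Psi_{zz}(z)} \leq c_1/(1-\abs{z})$ applied to the bounded holomorphic function $\Psi_z$, since $a_\theta = \Real(ire^{i\theta}\Psi_{zz})$ and $b_\theta = \Imag(ire^{i\theta}\Psi_{zz})$.)

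The only genuinely non-routine point is the $\Ltwo$ estimate: bounding $n^2 r^{2|n|}$ by $\sup_m m^2 r^{2m}$ and then using merely $\widehat{a_1} \in \ell^2$ only yields $(1-r)^{-2}$, and the gain of a full power of $(1-r)$ is exactly what the split $n^2 r^{2|n|} = (|n|r^{|n|})(|n|r^{|n|})$ together with the identity in (iii) and the uniform $\Hhalf$-control \eqref{eq:abHhalf} provides. Everything else is standard manipulation of the Poisson kernel.
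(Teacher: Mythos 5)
Your proposal is correct and is essentially the argument the paper has in mind: the paper's one-line proof invokes harmonicity, the bounds \eqref{eq:aonebound} and \eqref{eq:abHhalf}, and ``basic properties of convolution by the Poisson kernel,'' and your Fourier-side computation (Poisson multiplier $r^{\abs{n}}$, the identity $\sum_n \abs{n}\,r^{\abs{n}}\abs{\widehat{a_1}(n)}^2 \approx \norm[\Hhalf]{a(\sqrt{r},\cdot)}^2 \lesssim_\Omega 1$, and $\sup_m m\,r^m \lesssim (1-r)^{-1}$) is precisely a detailed execution of that, correctly capturing the point that the uniform $\Hhalf$ control is what yields the gain to $(1-r)^{-1}$ in the $\Ltwo$ estimate. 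Your parenthetical alternatives (the Cauchy estimate for $\Psi_{zz}$ for the $\Linfty$ bounds, and the observation that the restriction $r>\tfrac{1}{100}$ is not needed) are also fine.
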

\begin{proof}
Using the fact that $a$ and $b$ are harmonic functions on $\Dsp$ and the estimates \eqref{eq:aonebound} and \eqref{eq:abHhalf}, the estimates follow from basic properties of convolution by the Poisson kernel \eqref{eq:afromaone}. 
\end{proof}

\begin{lemma}\label{lem:Grrintest}
For $(r, \theta) \in (\frac{1}{100},1) \times [0,2\pi]$ we have
\begin{align*}
\abs{\int_0^\theta G_{rr}(r,s) \diff s} \lesssim_\Omega \frac{1}{1-r}
\end{align*}
\end{lemma}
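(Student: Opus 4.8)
The plan is to differentiate the explicit formula $G(r,s) = g(r)\brac{a^2(r,s) + b^2(r,s) - 1} + 1$ twice in $r$, which gives
\begin{align*}
G_{rr} = g''(r)(a^2 + b^2 - 1) + 4g'(r)(a a_r + b b_r) + 2g(r)\brac{a_r^2 + b_r^2 + a a_{rr} + b b_{rr}},
\end{align*}
and then to estimate $\int_0^\theta$ of each term separately. For $r \in (\frac{1}{100}, \frac{1}{2}]$ there is essentially nothing to do: since $\Psi_z$ is holomorphic and bounded on $\Dsp$, Cauchy's estimate shows that $a, b$ and all of their derivatives are $\lesssim_\Omega 1$ on $\cbrac{\abs{z} \leq 1/2}$, whence $\abs{\int_0^\theta G_{rr}(r,s)\diff s} \lesssim_\Omega 1 \lesssim_\Omega \frac{1}{1-r}$. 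For $r \in [\frac{1}{2}, 1)$ we have $g(r) = 1$ and $g'(r) = g''(r) = 0$, so $G_{rr} = 2\brac{a_r^2 + b_r^2 + a a_{rr} + b b_{rr}}$, and this is the case that really must be handled.

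For the first-order piece, the Cauchy--Riemann equations in polar coordinates \eqref{eq:CauchyRiempolar} give $a_r^2 + b_r^2 = \frac{1}{r^2}(a_\theta^2 + b_\theta^2)$; since $r > \frac{1}{100}$ and $\theta \leq 2\pi$, \lemref{lem:athetaLinftyLtwo} yields
\begin{align*}
\abs{\int_0^\theta \brac{a_r^2 + b_r^2}(r,s)\diff s} \leq \frac{1}{r^2}\brac{\norm[\Ltwo(\Sone)]{a_\theta(r,\cdot)}^2 + \norm[\Ltwo(\Sone)]{b_\theta(r,\cdot)}^2} \lesssim_\Omega \frac{1}{1-r}.
\end{align*}

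The main obstacle is the second-order piece $\int_0^\theta (a a_{rr} + b b_{rr})\diff s$, since a crude bound on $a_{rr}, b_{rr}$ loses too much as $\abs{z}\to 1$. The resolution is to use that $a$ and $b$ are harmonic on $\Dsp$, so $a_{rr} = -\frac{1}{r}a_r - \frac{1}{r^2}a_{\theta\theta}$ and likewise for $b$; combining this with \eqref{eq:CauchyRiempolar} (which gives $\frac{1}{r}(a a_r + b b_r) = \frac{1}{r^2}(a b_s - b a_s)$) one obtains
\begin{align*}
\int_0^\theta (a a_{rr} + b b_{rr})(r,s)\diff s = -\frac{1}{r^2}\int_0^\theta (a b_s - b a_s)(r,s)\diff s - \frac{1}{r^2}\int_0^\theta (a a_{\theta\theta} + b b_{\theta\theta})(r,s)\diff s.
\end{align*}
The first integral on the right is $\lesssim_\Omega 1$ by estimate \eqref{eq:absminusbasint} from the proof of \lemref{lem:Grestimate} --- this is exactly the point where conditions (2) and (3) of \assref{ass:main} enter. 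For the second, integrating by parts in $s$ gives $\int_0^\theta a a_{\theta\theta}(r,s)\diff s = (a a_\theta)(r,\theta) - (a a_\theta)(r,0) - \int_0^\theta a_\theta^2(r,s)\diff s$; by \eqref{eq:aonebound} and \lemref{lem:athetaLinftyLtwo}, the boundary term is $\lesssim_\Omega \norm[\Linfty(\Sone)]{a_\theta(r,\cdot)} \lesssim_\Omega \frac{1}{1-r}$ and $\int_0^\theta a_\theta^2 \diff s \leq \norm[\Ltwo(\Sone)]{a_\theta(r,\cdot)}^2 \lesssim_\Omega \frac{1}{1-r}$, with the same holding for $b$ in place of $a$. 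Since $r > \frac{1}{100}$ makes $\frac{1}{r^2} \lesssim 1$, collecting all the bounds proves the lemma.

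In short, the mechanism is: use harmonicity to trade $\partial_{rr}$ for $\frac{1}{r}\partial_r + \frac{1}{r^2}\partial_{\theta\theta}$, integrate by parts in $\theta$ to avoid an extra uncontrolled derivative, and recognize the surviving combination $a b_s - b a_s$ as one already controlled in \lemref{lem:Grestimate} via \assref{ass:main}; everything else reduces to the Poisson-kernel/Sobolev bounds of \lemref{lem:athetaLinftyLtwo}.
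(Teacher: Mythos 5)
Your proof is correct and follows essentially the same route as the paper: both reduce the radial second derivatives to tangential ones via the holomorphy of $\Psi_z$, arrive (for $r\geq \tfrac12$) at the identical expression $G_{rr}=\tfrac{2}{r^2}\brac{a_s^2+b_s^2-aa_{ss}-bb_{ss}}-\tfrac{2}{r^2}(ab_s-ba_s)$, integrate by parts in $s$, and invoke \eqref{eq:absminusbasint} together with \lemref{lem:athetaLinftyLtwo}. The only differences are cosmetic: the paper differentiates the already CR-converted formula for $G_r$ and so handles the $g',g''$ terms uniformly in $r\in(\tfrac{1}{100},1)$, whereas you use harmonicity plus a case split with Cauchy estimates for $r\leq\tfrac12$ — an equivalent packaging of the same computation.
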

\begin{proof}
From \eqref{eq:Grequations} we see that 
\begin{align*}
G_r = g'(r)(a^2 + b^2 - 1) + 2\frac{g(r)}{r}(a b_s - b a_s) 
\end{align*}
Now taking another $\partial_r$ derivative and using $\eqref{eq:CauchyRiempolar}$ we get
\begin{align*}
G_{rr} & = g''(r)(a^2 + b^2 - 1) + \brac{4\frac{g'(r)}{r}  -2\frac{g(r)}{r^2}} (ab_s - ba_s) \\
& \quad  + 2 \frac{g(r)}{r^2}(b_s^2 - aa_{ss} + a_s^2 - bb_{ss})
\end{align*}
From \eqref{eq:aonebound} and \eqref{eq:absminusbasint} we have for $(r, \theta) \in (\frac{1}{100},1) \times [0,2\pi]$
\begin{align*}
\abs{g''(r)\int_0^\theta (a^2 + b^2 - 1) \diff s} & \lesssim_\Omega 1 \\
\abs{\brac{4\frac{g'(r)}{r}  -2\frac{g(r)}{r^2}}\int_0^\theta (ab_s - ba_s) \diff s} & \lesssim_\Omega 1
\end{align*}
To control the last term we observe that
\begin{align*}
& \int_0^\theta (b_s^2 - aa_{ss} + a_s^2 - bb_{ss}) \diff s \\
& = 2\int_0^\theta (a_s^2 + b_s^2) \diff s  - \cbrac{a(r,\theta)a_s(r,\theta) - a(r,0)a_s(r,0) + b(r,\theta)b_s(r,\theta) - b(r,0)b_s(r,0)}
\end{align*}
Now using \lemref{lem:athetaLinftyLtwo} we see that for all  $(r, \theta) \in (\frac{1}{100},1) \times [0,2\pi]$
\begin{align*}
\abs{2 \frac{g(r)}{r^2} \int_0^\theta (b_s^2 - aa_{ss} + a_s^2 - bb_{ss}) \diff s} \lesssim_\Omega \frac{1}{1-r}
\end{align*}
Hence proved. 
\end{proof}

\begin{lemma}
We have the following estimates:
\begin{enumerate}
\item For $r \in (\frac{1}{100},1)$ we have
\begin{align}\label{eq:csecondderivest}
\abs{c''(r)} \lesssim_\Omega \frac{1}{1-r}
\end{align}
\item For $(r, \theta) \in (\frac{1}{100},1) \times [0,2\pi]$ we have
\begin{align}\label{eq:Lrrest}
\abs{G_\theta(r,\theta)}, \abs{G_r(r,\theta)}, \abs{L_{\theta\theta}(r, \theta)} + \abs{L_{\theta r}(r, \theta)} + \abs{L_{rr}(r, \theta)}  \lesssim_\Omega \frac{1}{1-r}
\end{align}
\item For $(r, \theta) \in (\frac{1}{100},1) \times [0,2\pi]$ we have
\begin{align}\label{eq:Frrest}
\abs{F_{\theta\theta}(r, \theta)} + \abs{F_{\theta r}(r, \theta)} + \abs{F_{rr}(r, \theta)}  \lesssim_\Omega \frac{1}{1-r}
\end{align}
\end{enumerate}
\end{lemma}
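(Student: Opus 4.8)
The plan is to differentiate the explicit formulas for $c$, $G$, $L$ and $F$ and estimate each resulting term using the first-order bounds already in hand, namely \eqref{eq:Gbounds}, \eqref{eq:cbounds}, \eqref{eq:cderivest}, \eqref{eq:LrLthetaest}, \eqref{eq:aonebound}, together with \lemref{lem:Grestimate}, \lemref{lem:athetaLinftyLtwo} and \lemref{lem:Grrintest}. The guiding remark is that on $(\frac{1}{100},1)$ one has $\frac{1}{1-r} > 1$, so any quantity bounded by an $\Omega$-constant is automatically $\lesssim_\Omega \frac{1}{1-r}$; hence for each quantity it suffices to show every term appearing is either $O_\Omega(1)$ or $O_\Omega\brac{\frac{1}{1-r}}$.

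First I would establish \eqref{eq:csecondderivest}: differentiating $c(r) = \frac{1}{2\pi}\int_0^{2\pi} G(r,s)\diff s$ twice under the integral sign gives $c''(r) = \frac{1}{2\pi}\int_0^{2\pi} G_{rr}(r,s)\diff s$, which is $\lesssim_\Omega \frac{1}{1-r}$ by \lemref{lem:Grrintest} evaluated at $\theta = 2\pi$. Next, for the $G_\theta$ and $G_r$ bounds in \eqref{eq:Lrrest}: writing $G = g(r)(a^2+b^2) + (1-g(r))$ gives $G_\theta = 2g(r)(a a_\theta + b b_\theta)$, which is $\lesssim_\Omega \frac{1}{1-r}$ by the $\Linfty$ bounds \eqref{eq:aonebound} on $a,b$ and the $\Linfty$ bounds on $a_\theta,b_\theta$ from \lemref{lem:athetaLinftyLtwo}; and $G_r$ is handled directly by the formula \eqref{eq:Grequations}, $G_r = g'(r)(a^2+b^2-1) + \frac{2g(r)}{r}(ab_\theta - ba_\theta)$, again via \eqref{eq:aonebound}, \lemref{lem:athetaLinftyLtwo}, and $r>\frac{1}{100}$ to control $g(r)/r$.

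For the second derivatives of $L$ in \eqref{eq:Lrrest} I would use $L_\theta = G/c$, so that $L_{\theta\theta} = G_\theta/c$ and $L_{\theta r} = G_r/c - G c'/c^2$, both controlled by the previous step with \eqref{eq:Gbounds}, \eqref{eq:cbounds}, \eqref{eq:cderivest}. For $L_{rr}$, differentiate once more the identity $L_r = -\frac{c'}{c^2}\int_0^\theta G\diff s + \frac{1}{c}\int_0^\theta G_r\diff s$; this yields a finite sum of terms carrying $c''$, $(c')^2$, $c$, $\int_0^\theta G\diff s$, $\int_0^\theta G_r\diff s$, $\int_0^\theta G_{rr}\diff s$, bounded respectively by \eqref{eq:csecondderivest}, \eqref{eq:cderivest}, \eqref{eq:cbounds}, \eqref{eq:Gbounds}, \lemref{lem:Grestimate}, \lemref{lem:Grrintest}, giving $|L_{rr}| \lesssim_\Omega \frac{1}{1-r}$. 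Finally, for \eqref{eq:Frrest}, differentiating \eqref{eq:FthetaFr} once more gives, for $0<r<1$,
\begin{align*}
F_{\theta\theta} & = -re^{iL}L_\theta^2 + ire^{iL}L_{\theta\theta}, \\
F_{\theta r} & = ie^{iL}L_\theta - re^{iL}L_\theta L_r + ire^{iL}L_{\theta r}, \\
F_{rr} & = 2ie^{iL}L_r - re^{iL}L_r^2 + ire^{iL}L_{rr},
\end{align*}
where every factor is $O_\Omega(1)$ by \eqref{eq:LrLthetaest} except $L_{\theta\theta}, L_{\theta r}, L_{rr}$, which are $O_\Omega\brac{\frac{1}{1-r}}$ by the previous step; this yields \eqref{eq:Frrest}.

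The argument is essentially bookkeeping: the genuine analytic content has already been absorbed into \lemref{lem:Grrintest} (which rests on the integration-by-parts representation of $a\psabs a$ from \lemref{lem:Hhalfformula} and on the $\Hhalf$ and $\Hil(a^2)$ bounds supplied by \assref{ass:main}) and into \lemref{lem:athetaLinftyLtwo} (Poisson-kernel estimates for the harmonic functions $a,b$). The only point requiring care in the present lemma is to verify, when expanding $L_{rr}$ and the second derivatives of $F$ by the product and quotient rules, that each term blows up at worst like $\frac{1}{1-r}$ and that no term acquires a worse singularity such as $\frac{1}{(1-r)^2}$; this holds because in each product the single $(1-r)^{-1}$ factor is always multiplied by factors that are uniformly bounded.
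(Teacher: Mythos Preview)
Your proposal is correct and follows essentially the same approach as the paper: differentiate the explicit formulas for $c$, $G$, $L$, $F$ and bound each term using the already-established first-order estimates together with \lemref{lem:Grestimate}, \lemref{lem:athetaLinftyLtwo}, and \lemref{lem:Grrintest}. The paper's proof is slightly terser (it does not write out the second derivatives of $F$ explicitly), but the logic and the inputs are identical.
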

\begin{proof}
The first estimate \eqref{eq:csecondderivest} follows directly from the definition of $c$ in \eqref{def:c} and \lemref{lem:Grrintest}. Now from the definition of $G$ in \eqref{def:G} and \eqref{eq:CauchyRiempolar} we see that
\begin{align*}
G_\theta(r,\theta) & = 2g(r)(aa_\theta + bb_\theta) \\
 G_r(r,\theta) & = g'(r)(a^2 + b^2 - 1) + 2\frac{g(r)}{r}(a b_s - b a_s)
\end{align*}
Hence we get the required bounds on $\abs{G_r}$ and $\abs{G_\theta}$ from \lemref{lem:athetaLinftyLtwo}. Now from the definition of $L$ in \eqref{def:L} we see that 
\begin{align*}
L_{\theta\theta}(r,\theta) &  = \frac{G_\theta(r,\theta)}{c(r)} \\
L_{\theta r}(r, \theta) & = - \frac{c'(r)}{c^2(r)}G(r, \theta) + \frac{G_r(r, \theta)}{c(r)} 
\end{align*}
The estimates for $\abs{L_{\theta\theta}}$ and $\abs{L_{\theta r}}$ now follow from the estimates for $\abs{G_r}$, $\abs{G_\theta}$ and the estimates \eqref{eq:cderivest} and \eqref{eq:cbounds}. We also observe that
\begin{align*}
L_{rr}(r, \theta)  = \brac{\frac{1}{c(r)}}''\int_0^\theta G(r, s) \diff s +  2\brac{\frac{1}{c(r)}}'\int_0^\theta G_r(r, s) \diff s +  \nobrac{\frac{1}{c(r)}}\int_0^\theta G_{rr}(r, s) \diff s
\end{align*}
The estimate for $\abs{L_{rr}}$ now follows from \lemref{lem:Grestimate}, \eqref{eq:cderivest}, \lemref{lem:Grrintest} and \eqref{eq:csecondderivest}. Similarly from the formulae of $F_\theta$ and $F_r$ in \eqref{eq:FthetaFr}, we easily get the required estimates for $\abs{F_{\theta\theta}}, \abs{F_{\theta r}}$ and $\abs{F_{rr}} $ from the estimates \eqref{eq:LrLthetaest} and \eqref{eq:Lrrest}. 
\end{proof}

\begin{lemma}\label{lem:FrFthetaestimates}
We have the following estimates:
\begin{enumerate}
\item For $z_1, z_2 \in \Dsp$ with $\frac{1}{100} < \abs{z_1}, \abs{z_2} < 1$ we have
\begin{align}\label{eq:Fthetamainuseful}
\abs{\brac{\frac{\abs{\Psi_z(z_1)}^{-2}}{\abs{z_1}} F_\theta(z_1) } - \brac{\frac{\abs{\Psi_z(z_2)}^{-2}}{\abs{z_2}} F_\theta(z_2) }} \lesssim_\Omega \abs{z_1 - z_2}
\end{align}
\item For $z_1, z_2 \in \Dsp$ with $\frac{1}{100} < \abs{z_1}, \abs{z_2} < 1$ we have
\begin{align}\label{eq:Frmainuseful}
\abs{\brac{\abs{\Psi_z(z_1)}^{-2} F_r(z_1) } - \brac{\abs{\Psi_z(z_2)}^{-2} F_r(z_2) }} \lesssim_\Omega \frac{\abs{z_1 - z_2}}{\min(1- \abs{z_1}, 1 - \abs{z_2})}
\end{align}
\end{enumerate}
\end{lemma}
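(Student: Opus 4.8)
The plan is to reduce both inequalities to pointwise bounds on the Cartesian gradients of the relevant maps on the annulus $A := \cbrac{z \in \Dsp \suchthat \frac{1}{100} < \abs{z} < 1}$, and then to obtain the difference estimates by integrating these bounds along short paths joining $z_1$ to $z_2$ inside $A$. Two elementary facts will be used throughout. First, on $A$ one has $\abs{\grad_z f} \lesssim \abs{f_r} + \abs{f_\theta}$, since $\frac{1}{r} < 100$ there. Second, because $\Psi_z$ is holomorphic on $\Dsp$ with $c_0 \leq \abs{\Psi_z} \leq c_1$, the Cauchy estimate on the disc $B(z, 1-\abs{z}) \subset \Dsp$ gives $\abs{\Psi_z'(z)} \leq \frac{c_1}{1-\abs{z}}$, hence $\abs{\grad_z(\abs{\Psi_z}^{-2})} = \abs{\Psi_z}^{-4}\cdot 2\abs{\Psi_z}\,\abs{\Psi_z'} \lesssim_\Omega \frac{1}{1-\abs{z}}$; on the compact sub-annulus $\frac{1}{100} \leq \abs{z} \leq \half$ this improves to $\abs{\grad_z(\abs{\Psi_z}^{-2})} \lesssim_\Omega 1$.

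For the first estimate the crucial point is the cancellation built into $F$. Using $F_\theta = ire^{iL}L_\theta$ from \eqref{eq:FthetaFr} together with $L_\theta = G/c$ from \eqref{def:L}, one obtains, with $q(z) := \abs{\Psi_z(z)}^{-2}G(z) = g(\abs{z}) + (1-g(\abs{z}))\abs{\Psi_z(z)}^{-2}$ and $z = re^{i\theta}$,
\[
\frac{\abs{\Psi_z(z)}^{-2}}{\abs{z}}\,F_\theta(z) = i\,q(z)\,\frac{e^{iL(r,\theta)}}{c(r)}.
\]
Since $G = \abs{\Psi_z}^2$ for $\abs{z} \geq \half$, the dangerous factor $\abs{\Psi_z}^{-2}$ cancels there and $q \equiv 1$. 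I would then check that each of the three factors $q$, $\frac{1}{c(r)}$, $e^{iL(r,\theta)}$ is bounded and has Cartesian gradient $\lesssim_\Omega 1$ on $A$: for $\frac{1}{c}$ use \eqref{eq:cbounds} and \eqref{eq:cderivest}; for $e^{iL}$ use $\abs{e^{iL}} = 1$ and $\abs{\grad_z e^{iL}} \leq \abs{L_r} + \frac{1}{r}\abs{L_\theta} \lesssim_\Omega 1$ from \eqref{eq:LrLthetaest}; and for $q$ note $q \equiv 1$ on $\cbrac{\abs{z} \geq \half}$, while on $\frac{1}{100} < \abs{z} < \half$ the bound $\abs{\grad_z q} \lesssim_\Omega 1$ follows from $\abs{g'} \lesssim 1$, $\abs{\Psi_z} \geq c_0$ and $\abs{\Psi_z'} \leq 2c_1$. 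The product rule then gives $\abs{\grad_z\brac{\frac{\abs{\Psi_z}^{-2}}{\abs{z}}F_\theta}} \lesssim_\Omega 1$ on $A$, and \eqref{eq:Fthetamainuseful} follows by integrating this along a path from $z_1$ to $z_2$ in $A$ of length comparable to $\abs{z_1-z_2}$; when $\abs{z_1-z_2}$ is bounded below (say $\geq \frac{1}{200}$) one just invokes the uniform bound $\lesssim_\Omega 1$.

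For the second estimate there is no such cancellation, but none is needed since only the loss $\frac{1}{\min(1-\abs{z_1},\,1-\abs{z_2})}$ is claimed. I would show directly that on $A$ the map $\abs{\Psi_z}^{-2}F_r$ is bounded $\lesssim_\Omega 1$ — from $\abs{\Psi_z}^{-2} \leq c_0^{-2}$ and $\abs{F_r} \lesssim_\Omega 1$ by \eqref{eq:LrLthetaest} — and that $\abs{\grad_z(\abs{\Psi_z}^{-2}F_r)} \lesssim_\Omega \frac{1}{1-\abs{z}}$. The latter is just the product rule: $\abs{\grad_z(\abs{\Psi_z}^{-2})}\,\abs{F_r} \lesssim_\Omega \frac{1}{1-\abs{z}}$ by the Cauchy estimate above, and $\abs{\Psi_z}^{-2}\,\abs{\grad_z F_r} \lesssim_\Omega \abs{F_{rr}} + \frac{1}{r}\abs{F_{r\theta}} \lesssim_\Omega \frac{1}{1-\abs{z}}$ by \eqref{eq:Frrest}. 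Then \eqref{eq:Frmainuseful} follows by path integration again: if $\abs{z_1-z_2} \leq \half\min(1-\abs{z_1},1-\abs{z_2})$, join $z_1$ to $z_2$ by a radial--arc--radial path in $A$ of length $\lesssim \abs{z_1-z_2}$ along which $1-\abs{w}$ stays $\gtrsim \min(1-\abs{z_1},1-\abs{z_2})$ (running the arc at the larger of the two radii); otherwise $1 \lesssim \frac{\abs{z_1-z_2}}{\min(1-\abs{z_1},1-\abs{z_2})}$ and the uniform bound suffices.

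The only genuinely fussy point is this last path-integration step: one must join two nearby points of $A$ by a path of comparable length that avoids the origin and, for the second estimate, stays at distance $\gtrsim \min(1-\abs{z_1},1-\abs{z_2})$ from $\Sone$; the radial--arc--radial route through the larger radius does this, and an angular-difference estimate shows the arc has length $\lesssim \abs{z_1-z_2}$. Everything else — the bounds \eqref{eq:cderivest}, \eqref{eq:LrLthetaest}, \eqref{eq:Frrest}, and the identity $G = \abs{\Psi_z}^2$ near $\Sone$ — is already in hand, so beyond this the proof is essentially assembly; the single real idea, the cancellation $\abs{\Psi_z}^{-2}G \equiv 1$ near the boundary, is precisely what the construction of $F$ was engineered to produce.
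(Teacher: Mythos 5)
Your proof is correct and follows essentially the same route as the paper's: in both, the heart of part (1) is the cancellation $\abs{\Psi_z}^{-2}G = g + (1-g)\abs{\Psi_z}^{-2}$, which renders the $F_\theta$ quantity Lipschitz on the annulus, while part (2) only needs the $\frac{1}{1-\abs{z}}$ derivative bounds coming from \eqref{eq:LrLthetaest} and \eqref{eq:Frrest}. The only cosmetic differences are that you derive $\abs{\grad(\abs{\Psi_z}^{\pm 2})} \lesssim_\Omega (1-\abs{z})^{-1}$ from a Cauchy estimate on $\Psi_{zz}$ where the paper uses \lemref{lem:athetaLinftyLtwo} together with \eqref{eq:CauchyRiempolar}, and that you spell out the radial--arc--radial path integration (staying in the annulus at radii between $\abs{z_1}$ and $\abs{z_2}$) which the paper leaves implicit.
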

\begin{proof}
From \eqref{eq:FthetaFr} we see that for $z \in \Dsp$, $z \neq 0$
\begin{align*}
\frac{\abs{\Psi_z(z)}^{-2}}{\abs{z}} F_\theta(z) & = i\abs{\Psi_z(z)}^{-2}e^{iL(r,\theta)}L_\theta(r,\theta)\\
 & = i\abs{\Psi_z(z)}^{-2}e^{iL(r,\theta)}\frac{G(r,\theta)}{c(r)} \\
 & = i \frac{e^{iL(r,\theta)}}{c(r)}\brac{g(r) + \frac{(1 - g(r))}{\abs{\Psi_z(z)}^2}}
\end{align*}
Therefore using the fact that $c_0 \leq \abs{\Psi_z(z)} $ and the bound \eqref{eq:cbounds} we get
\begin{align*}
& \abs{\brac{\frac{\abs{\Psi_z(z_1)}^{-2}}{\abs{z_1}} F_\theta(z_1) } - \brac{\frac{\abs{\Psi_z(z_2)}^{-2}}{\abs{z_2}} F_\theta(z_2) }} \\
& \lesssim_\Omega \abs{e^{iL(r_1, \theta_1)} - e^{iL(r_2, \theta_2)}} + \abs{c(r_1) - c(r_2)} + \abs{g(r_1) - g(r_2)} + \abs{\frac{1 - g(r_1)}{\abs{\Psi_z(z_1)}^2} - \frac{1 - g(r_2)}{\abs{\Psi_z(z_2)}^2} }
\end{align*}
Note that although the function $\frac{1}{\abs{\Psi_z(z)}^2}$ is not  Lipschitz on $\Dsp$, the function $\frac{1 - g(r)}{\abs{\Psi_z(z)}^2}$ is indeed Lipschitz on $\Dsp$ due to the fact that $(1 - g(r))$ is zero for $\frac{1}{2} \leq r < 1$. Therefore by additionally using the estimates \eqref{eq:cderivest}, \eqref{eq:LrLthetaest}, the first estimate  \eqref{eq:Fthetamainuseful} now follows. For the second estimate we use the bound $c_0 \leq \abs{\Psi_z(z)} $ and \eqref{eq:LrLthetaest} to obtain
\begin{align*}
& \abs{\brac{\abs{\Psi_z(z_1)}^{-2} F_r(z_1) } - \brac{\abs{\Psi_z(z_2)}^{-2} F_r(z_2) }} \\
& \lesssim_\Omega \abs{\abs{\Psi_z(z_1)}^2 - \abs{\Psi_z(z_2)}^2} + \abs{F_r(z_1) - F_r(z_2)}
\end{align*}
The estimate \eqref{eq:Frmainuseful} now follows easily from the fact that $\abs{\Psi_z(z)}^2 = a^2(z) + b^2(z)$ in conjunction with  \eqref{eq:CauchyRiempolar},  \lemref{lem:athetaLinftyLtwo} and \eqref{eq:Frrest}. 
\end{proof}

\section{Proof of main results}\label{sec:energy}

In this section we prove \thmref{thm:main} and \propref{prop:main}. Let us briefly explain how the change of variable $F$ helps in proving uniqueness. As explained in the introduction, the differential equation of $Y(y,t)$ is 
\begin{align*}
\frac{\diff Y(y,t)}{\diff t} = \btil(Y(y,t),t)\abs{\Psi_z(Y(y,t))}^{-2}
\end{align*}
and as  $\abs{\Psi_z}^{-2}$ is not log-Lipschitz near the boundary if the domain is less regular than $C^{1,1}$, one does not have a good bound on $\pt \abs{Y_1(y,t) - Y_2(y,t)}$. Hence the energy $E_2(t)$ defined in \eqref{eq:EnergystandardY} is not good enough to prove uniqueness. However the differential equation for $F(Y(y,t))$ is 
\begin{align*}
 \frac{\diff F(Y(y,t))}{\diff t} & = \brac{\btil(Y(y,t),t)\abs{\Psi_z(Y(y,t))}^{-2} } \cdot \grad F(Y(y,t)) \\
& = \Real\cbrac{\btil(Y(y,t),t)\frac{\Ybar(y,t)}{\abs{Y(y,t)}} } \abs{\Psi_z(Y(y,t))}^{-2} \partial_r F(Y(y,t)) \\
& \quad  + \Imag\cbrac{\btil(Y(y,t),t)\frac{\Ybar(y,t)}{\abs{Y(y,t)}} }\frac{\abs{\Psi_z(Y(y,t))}^{-2}}{\abs{Y(y,t)}}\partial_\theta F(Y(y,t)) 
\end{align*}
Now as we explain in more detail below, using the properties proved of $F$ and $\btil$ one can show that the terms on the right hand side of the above equation are log-Lipschitz! This removes the fundamental obstruction to proving uniqueness. Let us now us now prove \thmref{thm:main} in full detail. 

\begin{proof}[Proof of \thmref{thm:main}]
Let $(u_1, \w_1)$ and $(u_2, \w_2)$ be two Yudovich weak solutions in $\Omega$ in the time interval $[0,\infty)$ with the same initial vorticity $\w_0 \in \Linfty(\Omega)$. Let $Y_1, Y_2: \Dsp \times [0,\infty) \to \Dsp $ be the corresponding flows in $\Dsp$ of the solutions given by \eqref{eq:Y}. Consider the energy
\begin{align}\label{def:Energy}
E(t) = \int_\Dsp \abs{F(Y_1(y,t)) - F(Y_2(y,t)) } \abs{\Psi_z(y)}^2 \diff y
\end{align} 
where $F:\Dsp \to \Dsp$ is the change of variable defined in \eqref{def:F}. Our aim is to prove an estimate of the form $E'(t) \lesssim_{\norm[\infty]{\w_0}, \Omega} \phi(E(t)) $ on a suitable time interval. Once we have such an estimate, uniqueness will follow from Gr\"onwall's inequality.

From \eqref{eq:Y} and \lemref{lem:btil} part (1) we see that $\abs{\pt Y_i(y,t)} \lesssim_{\Omega, \norm[\infty]{\w_0}} 1 $ for all $(y,t) \in \Dsp \times [0,\infty)$ and $i = 1,2$. Using this and \lemref{lem:Fbilipschitz} we see that there exists $T = T(\norm[\infty]{\w_0}, \Omega) > 0$ small enough so that 
\begin{align}
\abs{Y_i(y,t) - y} & \leq \frac{1}{16} \qq \tx{ for all } (y, t) \in \Dsp\times [0,T], i = 1,2 \label{eq:Yiclosetoy} \\ 
\abs{F(Y_1(y,t)) - F(Y_2(y,t))} & \leq \frac{1}{10} \qq \tx{ for all } (y, t) \in \Dsp\times [0,T] \label{eq:FYoneminusFYtwoboundtime}
\end{align} 
In particular this means from \eqref{eq:Yiclosetoy} that for all $t \in [0,T]$ and $i = 1,2$
\begin{align}\label{eq:boundYi}
\begin{aligned}
\abs{Y_i(y,t)} & \leq \frac{1}{4} \qq \tx{ for } y \in \Dsp, \abs{y} \leq \frac{1}{8} \\
\abs{Y_i(y,t)} & \geq \frac{1}{16}  \qq \tx{ for } y \in \Dsp, \abs{y} \geq \frac{1}{8}
\end{aligned}
\end{align}
From now on we will suppress the variables $(y,t)$ in $Y_i(y,t)$ and simply write $Y_i$ for easier readability. Similarly we will suppress the time variable $t$ in $\btil(y,t)$.  Now using \eqref{eq:Y} gives us
\begin{align*}
E'(t) & \leq \int_\Dsp \abs{\pt F(Y_1) - \pt F(Y_2) } \abs{\Psi_z(y)}^2 \diff y \\
& = \int_\Dsp \abs{ \brac{\frac{\diff Y_1}{\diff t}}\cdot \grad F(Y_1) - \brac{\frac{\diff Y_2}{\diff t}}\cdot \grad F(Y_2) } \abs{\Psi_z(y)}^2 \diff y \\
& = \int_\Dsp \abs{ \brac{\btil_1(Y_1)\abs{\Psi_z(Y_1)}^{-2} }\cdot \grad F(Y_1) - \brac{\btil_2(Y_2)\abs{\Psi_z(Y_2)}^{-2} } \cdot \grad F(Y_2) } \abs{\Psi_z(y)}^2 \diff y
\end{align*}
Using the fact that $F(z) = z$ for all $\abs{z} \leq 1/4$ and by using \eqref{eq:boundYi} we see that for all $t \in [0,T]$ we have
\begin{align*}
& E'(t) \\
& \leq \int_{\abs{y} \leq \frac{1}{8}} \abs{ \brac{\btil_1(Y_1)\abs{\Psi_z(Y_1)}^{-2} } - \brac{\btil_1(Y_2)\abs{\Psi_z(Y_2)}^{-2} }} \abs{\Psi_z(y)}^2 \diff y \\
& \quad + \int_{\frac{1}{8} < \abs{y} < 1} \abs{ \brac{\btil_1(Y_1)\abs{\Psi_z(Y_1)}^{-2} }\cdot \grad F(Y_1) - \brac{\btil_1(Y_2)\abs{\Psi_z(Y_2)}^{-2} } \cdot \grad F(Y_2) } \abs{\Psi_z(y)}^2 \diff y \\
& \quad + \int_\Dsp \abs{ \brac{(\btil_1 - \btil_2)(Y_2)\abs{\Psi_z(Y_2)}^{-2} }\cdot \grad F(Y_2) } \abs{\Psi_z(y)}^2 \diff y \\
& = \rom{1} + \rom{2} + \rom{3}
\end{align*}
Let us now control each of the terms. Among these terms, the first term will be easily controlled as it is localized to the interior. The second term is nontrivial and we will use the properties of $F$ and $\btil$ proved earlier to control this. The third term will be easily controlled by standard arguments as $F$ is Lipschitz. 

\smallskip
\noindent$\mathbf{Controlling \enspace \rom{1}:}$ For the first term we first observe from \eqref{eq:boundYi} that $\abs{Y_i(y,t)} \leq \frac{1}{4}$ for $\abs{y} \leq \frac{1}{8}$. Now by using the fact that $\abs{\Psi_z(z)}^{-2}$ is Lipschitz on $\abs{z} \leq \frac{1}{4}$, and the fact that $\btil_1$ is log-Lipschitz from \lemref{lem:btil} part (2), we easily see that 
\begin{align}\label{eq:Iestimated}
\rom{1} \lesssim_{\norm[\infty]{\w_0}, \Omega} \int_{\abs{y} \leq \frac{1}{8}} \phi\brac{\abs{Y_1 - Y_2}}  \abs{\Psi_z(y)}^2 \diff y
\end{align}

\noindent$\mathbf{Controlling \enspace \rom{2}:}$ For the second term, we first observe that for $\frac{1}{8} \leq \abs{y} < 1$ we have from \eqref{eq:boundYi} that $\abs{Y_i(y,t)} \geq \frac{1}{16}$. Hence
\begin{align*}
& \brac{\btil_1(Y_i)\abs{\Psi_z(Y_i)}^{-2} } \cdot \grad F(Y_i) \\
& = \Real\cbrac{\btil_1(Y_i)\frac{\Ybar_i}{\abs{Y_i}}  }\abs{\Psi_z(Y_i)}^{-2}\partial_r F(Y_i) + \Imag\cbrac{\btil_1(Y_i)\frac{\Ybar_i}{\abs{Y_i}} }\frac{\abs{\Psi_z(Y_i)}^{-2}}{\abs{Y_i}}\partial_\theta F(Y_i) 
\end{align*}
Now using \lemref{lem:btil} part (1) and the $F_\theta$ bound from \eqref{eq:LrLthetaest} we see that 
\begin{align*}
& \abs{\Imag\cbrac{\btil_1(Y_1)\frac{\Ybar_1}{\abs{Y_1}} }\frac{\abs{\Psi_z(Y_1)}^{-2}}{\abs{Y_1}}\partial_\theta F(Y_1) - \Imag\cbrac{\btil_1(Y_2)\frac{\Ybar_2}{\abs{Y_2}} } \frac{\abs{\Psi_z(Y_2)}^{-2}}{\abs{Y_2}}\partial_\theta F(Y_2) } \\
& \lesssim_{\norm[\infty]{\w_0}, \Omega} \abs{\btil_1(Y_1) - \btil_1(Y_2)} + \abs{Y_1 - Y_2} + \abs{\frac{\abs{\Psi_z(Y_1)}^{-2}}{\abs{Y_1}}\partial_\theta F(Y_1) - \frac{\abs{\Psi_z(Y_2)}^{-2}}{\abs{Y_2}}\partial_\theta F(Y_2)} \\
&  \lesssim_{\norm[\infty]{\w_0}, \Omega} \phi(\abs{Y_1 - Y_2})
\end{align*}
where in the last step we used \lemref{lem:btil} part (2) and \eqref{eq:Fthetamainuseful}. Therefore  we see that for $\frac{1}{8} \leq \abs{y} < 1$
\begin{align*}
& \abs{  \brac{\btil_1(Y_1)\abs{\Psi_z(Y_1)}^{-2} } \cdot \grad F(Y_1) -  \brac{\btil_1(Y_2)\abs{\Psi_z(Y_2)}^{-2} } \cdot \grad F(Y_2)} \\
& \lesssim_{\norm[\infty]{\w_0}, \Omega} \abs{ \Real\cbrac{\btil_1(Y_1)\frac{\Ybar_1}{\abs{Y_1}} }\abs{\Psi_z(Y_1)}^{-2}\partial_r F(Y_1) -  \Real\cbrac{\btil_1(Y_2)\frac{\Ybar_2}{\abs{Y_2}} }\abs{\Psi_z(Y_2)}^{-2}\partial_r F(Y_2)} \\
& \quad + \phi\brac{\abs{Y_1 - Y_2}}
\end{align*}
Now for fixed $(y,t)$ with $\frac{1}{8} \leq \abs{y} < 1$ we assume without loss of generality that $1 - \abs{Y_1(y,t)} \leq 1 - \abs{Y_2(y,t)}$. Hence using \lemref{lem:btil} part (2) and the bound on $F_r$ from \eqref{eq:LrLthetaest} we see that
\begin{align*}
& \abs{  \brac{\btil_1(Y_1)\abs{\Psi_z(Y_1)}^{-2} } \cdot \grad F(Y_1) -  \brac{\btil_1(Y_2)\abs{\Psi_z(Y_2)}^{-2} } \cdot \grad F(Y_2)} \\
& \lesssim_{\norm[\infty]{\w_0}, \Omega} \abs{ \Real\cbrac{\btil_1(Y_1)\frac{\Ybar_1}{\abs{Y_1}} } \brac{\abs{\Psi_z(Y_1)}^{-2}\partial_r F(Y_1) - \abs{\Psi_z(Y_2)}^{-2}\partial_r F(Y_2)  }} \\
& \quad + \abs{\Real\cbrac{\btil_1(Y_1)\frac{\Ybar_1}{\abs{Y_1}} } - \Real\cbrac{\btil_1(Y_2)\frac{\Ybar_2}{\abs{Y_2}} }} \, \abs{\Psi_z(Y_2)}^{-2}\abs{\partial_r F(Y_2)} +  \phi\brac{\abs{Y_1 - Y_2}} \\
&  \lesssim_{\norm[\infty]{\w_0}, \Omega} \abs{ \Real\cbrac{\btil_1(Y_1)\frac{\Ybar_1}{\abs{Y_1}} } \brac{\abs{\Psi_z(Y_1)}^{-2}\partial_r F(Y_1) - \abs{\Psi_z(Y_2)}^{-2}\partial_r F(Y_2)  }}  + \phi\brac{\abs{Y_1 - Y_2}}
\end{align*}
To estimate the first term, there are two cases:

\smallskip
\textbf{Case 1}: $\abs{Y_1(y,t) - Y_2(y,t)} < 1 - \abs{Y_1(y,t)}$. 

As $\frac{1}{8} \leq \abs{y} < 1$, this implies that we have $\abs{\ln(\abs{Y_1(y,t) - Y_2(y,t)})} \geq \abs{\ln(1 - \abs{Y_1(y,t)})}$. Therefore using \eqref{eq:Frmainuseful} and \lemref{lem:btil} part (4) we obtain
\begin{align*}
& \abs{ \Real\cbrac{\btil_1(Y_1)\frac{\Ybar_1}{\abs{Y_1}} } \brac{\abs{\Psi_z(Y_1)}^{-2}\partial_r F(Y_1) - \abs{\Psi_z(Y_2)}^{-2}\partial_r F(Y_2)  }} \\
&  \lesssim_{\norm[\infty]{\w_0}, \Omega} \phi(1 - \abs{Y_1}) \frac{\abs{Y_1 - Y_2}}{1 - \abs{Y_1}} \\
&  \lesssim_{\norm[\infty]{\w_0}, \Omega} \abs{\ln(1 - \abs{Y_1})}\abs{Y_1 - Y_2} \\
&  \lesssim_{\norm[\infty]{\w_0}, \Omega} \phi(\abs{Y_1 - Y_2})
\end{align*}

\smallskip
\textbf{Case 2}: $\abs{Y_1(y,t) - Y_2(y,t)} \geq 1 - \abs{Y_1(y,t)}$. 

This implies that $\phi(1 - \abs{Y_1(y,t)}) \leq \phi(\abs{Y_1(y,t) - Y_2(y,t)})$.  Now we simply use the bound on $F_r$ from \eqref{eq:LrLthetaest} and \lemref{lem:btil} part (4) to see that
\begin{align*}
& \abs{ \Real\cbrac{\btil_1(Y_1)\frac{\Ybar_1}{\abs{Y_1}} } \brac{\abs{\Psi_z(Y_1)}^{-2}\partial_r F(Y_1) - \abs{\Psi_z(Y_2)}^{-2}\partial_r F(Y_2)  }} \\ 
&  \lesssim_{\norm[\infty]{\w_0}, \Omega} \abs{\Real\cbrac{\btil_1(Y_1)\frac{\Ybar_1}{\abs{Y_1}} }} \\
&  \lesssim_{\norm[\infty]{\w_0}, \Omega} \phi(1 - \abs{Y_1}) \\
& \lesssim_{\norm[\infty]{\w_0}, \Omega} \phi(\abs{Y_1 - Y_2})
\end{align*}

Therefore this shows that 
\begin{align}\label{eq:IIestimated}
\rom{2} \lesssim_{\norm[\infty]{\w_0}, \Omega} \int_{\frac{1}{8} < \abs{y} < 1}  \phi(\abs{Y_1 - Y_2})\abs{\Psi_z(y)}^2 \diff y
\end{align}

\smallskip
\noindent$\mathbf{Controlling \enspace \rom{3}:}$ Using \lemref{lem:Fbilipschitz} and the lower bound $0< c_0 \leq \abs{\Psi_z(z)}$ we see that
\begin{align*}
\rom{3} & =  \int_\Dsp \abs{ \brac{(\btil_1 - \btil_2)(Y_2)\abs{\Psi_z(Y_2)}^{-2} }\cdot \grad F(Y_2) } \abs{\Psi_z(y)}^2 \diff y  \\
& \lesssim_{\Omega}  \int_\Dsp \abs{ \nobrac{(\btil_1 - \btil_2)(Y_2) } }   \abs{\Psi_z(y)}^2 \diff y
\end{align*}
Now as $X(\cdot, t), X^{-1}(\cdot, t) : \Omega \to \Omega$ preserve the Lebesgue measure $\diff x$, we see that the mappings $Y(\cdot, t), Y^{-1}(\cdot, t) : \Dsp \to \Dsp$ preserve the measure $\abs{\Psi_z(y)}^2 \diff y$. Hence using the change of variable $Y_2(y,t) \mapsto y$ we see that 
\begin{align*}
\rom{3} \lesssim_{\Omega}  \int_\Dsp \abs{ \nobrac{(\btil_1 - \btil_2)(y,t) } }  \diff y
\end{align*}
Now for $i = 1,2$ we have from \eqref{eq:btil} 
\begin{align*}
\btil_i(y,t) =  \brac*[\Big]{\frac{i}{2\pi}} \int_{\Dsp}\sqbrac{\frac{1}{\ybar - \sbar} - \frac{1}{\ybar - \frac{1}{s}}}\wtil_i(s,t)  \abs{\Psi_z (s)}^{2} \diff s
\end{align*}
From \lemref{lem:transport} we see that $\w_i(X_i(x,t), t) = \w_0(x)$ for $i = 1,2$. Hence we see that $\wtil_i(Y_i(y,t), t) = \wtil_0(y)$ for $i = 1,2$. Therefore applying the change of variable $s \mapsto Y_i(s,t)$ gives us
\begin{align*}
\btil_i(y,t) =  \brac*[\Big]{\frac{i}{2\pi}} \int_{\Dsp}\sqbrac{\frac{1}{\ybar - \Ybar_i(s,t)} - \frac{1}{\ybar - \frac{1}{Y_i(s,t)}}}\wtil_0(s)  \abs{\Psi_z (s)}^{2} \diff s
\end{align*}
This implies that 
\begin{align*}
 \abs{\btil_1(y,t) - \btil_2(y,t)} & \lesssim_{\norm[\infty]{\w_0}, \Omega} \int_\Dsp \frac{\abs{Y_1(s,t) - Y_2(s,t)}}{\abs{y - Y_1(s,t)}\abs{y - Y_2(s,t)} } \abs{\Psi_z(s)}^2 \diff s \\
& \quad + \int_\Dsp \frac{\abs{Y_1(s,t) - Y_2(s,t)}}{\abs{\ybar\, Y_1(s,t) - 1}\abs{\ybar\,  Y_2(s,t) - 1} } \abs{\Psi_z(s)}^2 \diff s
\end{align*}
Now observe that for $y ,z \in \Dsp$, we have $\abs{\ybar z - 1} \geq \abs{\ybar - \zbar}$. Therefore 
\begin{align*}
\abs{\btil_1(y,t) - \btil_2(y,t)}  \lesssim_{\norm[\infty]{\w_0}, \Omega} \int_\Dsp \frac{\abs{Y_1(s,t) - Y_2(s,t)}}{\abs{y - Y_1(s,t)}\abs{y - Y_2(s,t)} } \abs{\Psi_z(s)}^2 \diff s 
\end{align*}
This implies that 
\begin{align*}
\rom{3}  \lesssim_{\norm[\infty]{\w_0}, \Omega} \int_\Dsp \cbrac{\int_\Dsp \frac{\abs{Y_1(s,t) - Y_2(s,t)}}{\abs{y - Y_1(s,t)}\abs{y - Y_2(s,t)} } \abs{\Psi_z(s)}^2 \diff s}  \diff y
\end{align*}
Now using Fubini and \lemref{lem:phiabest} gives us
\begin{align}\label{eq:IIIestimated}
\rom{3}  \lesssim_{\norm[\infty]{\w_0}, \Omega} \int_\Dsp \phi(\abs{Y_1(s,t) - Y_2(s,t)})\abs{\Psi_z(s)}^2 \diff s
\end{align}

Therefore combining \eqref{eq:Iestimated}, \eqref{eq:IIestimated} and \eqref{eq:IIIestimated} gives us
\begin{align*}
E'(t)  \lesssim_{\norm[\infty]{\w_0}, \Omega} \int_\Dsp \phi(\abs{Y_1(y,t) - Y_2(y,t)})\abs{\Psi_z(y)}^2 \diff y
\end{align*}
Let $\tilde{c} = \tilde{c}(\Omega) > 0 $ be such that $\tilde{c} \abs{\Psi_z(y)}^2 \diff y$ is a probability measure on $\Dsp$. Now using \lemref{lem:Fbilipschitz} we see that 
\begin{align*}
E'(t)  \lesssim_{\norm[\infty]{\w_0}, \Omega} \int_\Dsp \phi(\abs{F(Y_1(y,t)) - F(Y_2(y,t))}) (\tilde{c}\abs{\Psi_z(y)}^2) \diff y
\end{align*} 
Now from \eqref{eq:FYoneminusFYtwoboundtime} and the fact that $\phi$ is concave on $[0, \frac{1}{10}]$ we obtain from Jensen's inequality
\begin{align*}
E'(t)  & \lesssim_{\norm[\infty]{\w_0}, \Omega} \phi\brac{\int_\Dsp \abs{F(Y_1(y,t)) - F(Y_2(y,t))} (\tilde{c}\abs{\Psi_z(y)}^2) \diff y} \\
& \lesssim_{\norm[\infty]{\w_0}, \Omega} \phi(E(t))
\end{align*}
Hence by Gr\"onwall's inequality we see that $E(t) = 0$ for all $t \in [0,T]$. From \lemref{lem:Fbilipschitz} this gives us $Y_1(y,t) = Y_2(y,t)$ for a.e. $(y,t) \in \Dsp \times [0,T]$. Therefore $(u_1, \w_1) = (u_2, \w_2)$ a.e.  on $\Omega \times [0,T]$. Now as $T = T(\norm[\infty]{\w_0}, \Omega)$ does not depend on the profile of the vorticity, we can repeat this argument for the time intervals $[T, 2T], [2T, 3T]$ and so on, to give us uniqueness on $[0,\infty)$. Hence proved. 
\end{proof}

\begin{proof}[Proof of \propref{prop:main}]
If $\Omega$ is a bounded simply connected $C^{1,\alpha}$ domain for some $0<\alpha < 1$, then  $\log(\Psi_z)$ extends continuously to $\Dspbar$ (See proof of Theorem 3.5 in \cite{Po92}. Here we fix the ambiguity in the definition of $\log$ by choosing a value of $\Imag(\log(\Psi_z(0)))$. The choice one makes is irrelevant). Hence there exists constants $c_0, c_1 > 0$ so that $c_0 \leq \abs{\Psi_z(s)} \leq c_1$ for all $z \in \Dspbar$ and therefore the first assumption of \assref{ass:main} is satisfied. Now by the Kellog-Warschawki  theorem (see Theorem 3.6 in \cite{Po92}) we see that $\Psi_z $ extends continuously to $\Dspbar$ and is $C^{\alpha}$ on $\Dspbar$. Hence $\abs{\Psi_z}^2$ is $C^{\alpha}$ on $\Dspbar$ and hence 
\begin{align*}
\sup_{r \in [0,1)}\norm[C^\alpha]{\abs{\Gcal}^2(r, \cdot)} \lesssim 1
\end{align*}
As the Hilbert transform $\Hil$ is bounded on $C^{\alpha}(\Sone)$, we see that 
\begin{align*}
\sup_{r \in [0,1)} \norm[\Linfty(\Sone)]{\Hil(\abs{\Gcal}^2(r, \cdot))} \lesssim \sup_{r \in [0,1)} \norm[C^{\alpha}(\Sone)]{\Hil(\abs{\Gcal}^2(r, \cdot))} \lesssim 1
\end{align*}
Therefore the third assumption of \assref{ass:main} is satisfied. Now let us prove the second assumption in all three cases:
\begin{enumerate}
\item Consider the $2\pi$ periodic function $Z : \Rsp \to \partial \Omega$ given by $\Z(\ap) = \Psi(e^{i\ap})$. From this we see that 
\begin{align}\label{eq:Zapdef}
\Z_\ap(\ap) = i\Psi_z(e^{i\ap})e^{i\ap}
\end{align} 
Now consider the given $2\pi$ periodic function $f:\Rsp \to \partial \Omega$ satisfying $\abs{f'(\ap)} \neq 0$ for all $\ap \in [0,2\pi]$ and that $f(\ap) \neq f(\bp) $ for $\ap, \bp \in [0,2\pi)$. We assume without loss of generality that the parametrization $f$ is counter clockwise. As $f'$ is continuous, we see that there exists $c>0$ such that 
\begin{align}\label{eq:faplowerupperbound}
0<  c \leq \abs{f_\ap(\ap)} \leq \frac{1}{c} < \infty \qq\tx{ for all } \ap \in \Rsp
\end{align}
Therefore we have a strictly increasing homeomorphism $h : \Rsp \to \Rsp$ such that $\Z(\ap) = f(h(\ap))$ for all $\ap \in \Rsp$, and $h(2n\pi) = 2n\pi$ for $n \in \Zsp$ and $h(\ap + 2\pi) = h(\ap) + 2\pi$. Therefore we see that
\begin{align}\label{eq:Zap}
\Z_\ap(\ap) = f'(h(\ap))h'(\ap)
\end{align}
Now let $l, m: \Dspbar \to \Rsp$ be continuous functions such that $\log(\Psi_z) = l + im$. By taking absolute values in \eqref{eq:Zap} we see that 
\begin{align}\label{eq:hderivbound}
h'(\ap) = \frac{e^{l(1, \ap)}}{\abs{f'(h(\ap))}}
\end{align}
Therefore we see that $h$ is bi-Lipschitz. Now as $f' \in \Hhalf(\Sone)$ and from \eqref{eq:faplowerupperbound} and \lemref{lem:Hhalfformula} we see that $\frac{f'}{\abs{f'}} \in \Hhalf(\Sone)$. As $h$ is bi-Lipschitz, from \lemref{lem:Hhalfformula} we now see that $\frac{f'}{\abs{f'}}\compose \h \in \Hhalf(\Sone)$. However from \eqref{eq:Zapdef}, \eqref{eq:Zap} we obtain
\begin{align*}
e^{im(1, \ap)} = -ie^{-i\ap}\frac{\Zap(\ap)}{\abs{\Zap(\ap)}}  = -ie^{-i\ap} \frac{f'(h(\ap))}{\abs{f'(h(\ap))}}
\end{align*}
Therefore $e^{im(1 ,\cdot)} \in \Hhalf(\Sone)$.  Now \lemref{lem:fefHhalf} below implies that $m(1, \cdot) \in \Hhalf(\Sone)$. From the fact that $\log(\Psi_z)$ is holomorphic, we see that we have $l(1, \cdot) - \Avg(l(1, \cdot)) = -\Hil(m(1, \cdot))$. Hence we see that $l(1, \cdot) \in \Hhalf(\Sone)$. Therefore again from \lemref{lem:fefHhalf} we see that $\Psi_z(1, \cdot) \in \Hhalf(\Sone)$. Therefore the second assumption of \assref{ass:main} is satisfied. 

\item From the Kellog-Warschawki theorem we see that there exists a $C>0$ so that $\abs{\Psi_z(z_1)  - \Psi_z(z_2)} \leq C\abs{z_1 - z_2}^\alpha$ for all $z_1, z_2 \in \Dspbar$. As $\alpha > \half$, we see from \lemref{lem:Hhalfformula} that $\Psi_z(1, \cdot) \in \Hhalf(\Sone)$. Therefore the second assumption of \assref{ass:main} is satisfied. 

\item For $0 < r < 1$, let $\Omega_r = \Psi(B_r(0))$. As $\Omega$ is convex this implies that $\Omega_r$ is a smooth convex domain for all $0 < r < 1$ (see Chapter 5, page 222 in \cite{Ne75}). Let $Z : (\frac{1}{2}, 1) \times \Rsp \to \Csp$ be given by $Z(r, \ap) = \Psi(re^{i\ap})$. Now again as in the first example, let $l, m: \Dspbar \to \Rsp$ be continuous functions such that $\log(\Psi_z) = l + im$. Therefore 
\begin{align*}
\Z_\ap(r, \ap) = ir\Psi_z(re^{i\ap})e^{i\ap} = ire^{l(r,\ap) + im(r,\ap)}e^{i\ap}
\end{align*}
As $\Omega_r$ is convex for $r \in (1/2, 1)$, this implies that $m(r,\ap) + \ap$ is an increasing function in $\ap$ for each fixed $r \in (1/2, 1)$. As $m(r, 0) = m(r, 2\pi)$ this implies that for $r \in (1/2, 1)$ we have
\begin{align*}
\int_0^{2\pi} \abs{m_\ap(r, \ap)} \diff \ap \leq 2\pi + \int_0^{2\pi} \abs{m_\ap(r, \ap) + 1} \diff \ap =  2\pi + \int_0^{2\pi} \brac{m_\ap(r, \ap) + 1} \diff \ap = 4\pi
\end{align*}
Now as $\Psi_z$ is $C^{\alpha}$ on $\Dspbar$, we see that $\log(\Psi_z)$ is $C^{\alpha}$ on $\Dspbar$ and hence 
\begin{align*}
\sup_{r \in (\frac{1}{2}, 1)} \norm[\Linfty(\Sone)]{\Hil (m(r, \cdot))} \lesssim \sup_{r \in (\frac{1}{2}, 1)} \norm[C^{\alpha}(\Sone)]{\Hil (m(r, \cdot))} \lesssim_\Omega 1
\end{align*}
Therefore we see that for all $r \in (1/2, 1) $ we have 
\begin{align*}
\norm[\Hhalf(\Sone)]{m(r, \cdot)}^2 \lesssim  \norm[\Linfty(\Sone)]{\Hil (m(r, \cdot))} \norm[\Lone(\Sone)]{m_\ap(r, \cdot)} \lesssim_\Omega 1
\end{align*}
As $l(r, \cdot) - \Avg(l(r, \cdot)) = -\Hil(m(r, \cdot))$, we see that $\sup_{r \in (\half, 1)}\norm[\Hhalf(\Sone)]{l(r, \cdot)} \lesssim_\Omega 1$. Now as $l + im$ is a bounded function on $\Dspbar$, we see from  \lemref{lem:fefHhalf} that 
\begin{align*}
\sup_{r \in (\frac{1}{2}, 1)} \norm[\Hhalf(\Sone)]{\Psi_z(r, \cdot)} \lesssim_\Omega 1
\end{align*}
Hence assumption two of \assref{ass:main} is satisfied. 

\end{enumerate}
\end{proof}

\section{Appendix}\label{sec:appendix}

Here we collect some basic estimates we use throughout the paper. 

\begin{lemma}\label{lem:phi}
Let $T,R,c>0$ and let $y:[0,T] \to \Rsp^{+}$ be such that $\abs{y(t)}\leq R$ for all $t\in [0,T]$ and satisfy 
\begin{align*}
\abs{\frac{\diff y}{\diff t}} \leq c\phi(y(t)) \qq\qq y(0) = y_0>0,
\end{align*}
where $\phi$ is given by \eqref{def:phi}. Then 
\begin{align*}
\sqbrac{\frac{y(0)}{10(R + 1)}}^{e^{ct}} \leq  y(t) \leq \nobrac{10(R+1)} \cbrac{y(0)}^{e^{-ct}} \qq \tx{ for all } t\in[0,T].
\end{align*}
\end{lemma}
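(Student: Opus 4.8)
The plan is to reduce the estimate to the range where $\phi(x)=-x\ln x$ by a rescaling, and then to linearize the resulting inequality with a logarithmic substitution. First I would set $M=10(R+1)$, so that $M\geq 1$, and put $z(t)=y(t)/M$. Since $0<y(t)\leq R$ one has $z(t)\in(0,1/10)$ for every $t\in[0,T]$; in particular $-\ln z(t)\geq\ln 10>1$, so by the definition \eqref{def:phi} we have $\phi(z(t))=-z(t)\ln z(t)$. Using $\abs{\dot y}\leq c\,\phi(y)=c\,\phi(Mz)$ together with the scaling property $\phi(Ms)\leq M\phi(s)$ (valid for all $s\geq 0$ since $M\geq 1$) gives
\[
\abs{\dot z}=M^{-1}\abs{\dot y}\leq c\,\phi(z)=-c\,z\ln z .
\]

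Next I would substitute $w(t)=-\ln z(t)>0$. The chain rule gives $\dot w=-\dot z/z$, hence $\abs{\dot w}\leq c(-z\ln z)/z=c\,w$, a linear differential inequality. Since $\frac{d}{dt}\brac{e^{-ct}w}=e^{-ct}(\dot w-c w)\leq 0$ and $\frac{d}{dt}\brac{e^{ct}w}=e^{ct}(\dot w+c w)\geq 0$, integrating from $0$ to $t$ yields $w(0)e^{-ct}\leq w(t)\leq w(0)e^{ct}$. Exponentiating and using $z=e^{-w}$ gives $z(0)^{e^{ct}}\leq z(t)\leq z(0)^{e^{-ct}}$, i.e.
\[
M\brac{y_0/M}^{e^{ct}}\ \leq\ y(t)\ \leq\ M\brac{y_0/M}^{e^{-ct}}.
\]

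Then I would simplify the constants, using only $M\geq 1$. For the upper bound, $M(y_0/M)^{e^{-ct}}=M^{\,1-e^{-ct}}y_0^{e^{-ct}}\leq M\,y_0^{e^{-ct}}$ since $0\leq 1-e^{-ct}\leq 1$, which is the asserted bound $y(t)\leq 10(R+1)\,y_0^{e^{-ct}}$. For the lower bound, $M(y_0/M)^{e^{ct}}=M\cdot M^{-e^{ct}}y_0^{e^{ct}}\geq M^{-e^{ct}}y_0^{e^{ct}}=(y_0/M)^{e^{ct}}$, which is the asserted bound $y(t)\geq\sqbrac{y_0/(10(R+1))}^{e^{ct}}$.

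The one point I expect to require care — and the only real obstacle — is the regularity needed to run the chain rule for $w=-\ln z$. Since $\phi$ is bounded on $[0,R]$, $\dot y$ is bounded, so $y$ and $z$ are Lipschitz on $[0,T]$; but a priori $z$ could approach $0$. The argument is self-improving, however: on the maximal subinterval $[0,t_*)\subseteq[0,T]$ on which $y$ stays positive, $z$ is positive and continuous, hence bounded below by a positive constant on each compact subinterval, so $w$ is Lipschitz there and the computation above applies and gives $z(t)\geq z(0)^{e^{cT}}>0$ throughout $[0,t_*)$; by continuity the same holds at $t_*$, forcing $t_*=T$ (this is just the statement that $\phi$ obeys the Osgood condition, so $y$ cannot vanish in finite time). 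Hence $z$ is bounded below on all of $[0,T]$, $w$ is absolutely continuous, and the manipulations of $e^{\mp ct}w$ are justified by the fundamental theorem of calculus. Everything else is the elementary linear Grönwall estimate sketched above.
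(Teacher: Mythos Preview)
Your argument is correct and follows essentially the same route as the paper's proof: rescale so that $\phi$ reduces to $-x\ln x$, then linearize via the logarithmic substitution and apply Gr\"onwall. The paper merely cites \cite{AgNa22} for these steps and records the resulting inequalities $\ln y(t)\leq e^{-ct}\ln y(0)+\ln(10(R+1))$ and $\ln y(t)\geq e^{ct}\ln y(0)-e^{ct}\ln(10(R+1))$, which are exactly what your computation with $w=-\ln(y/M)$ yields; your version has the advantage of being self-contained and of handling the regularity issue (positivity of $y$) explicitly.
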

\begin{proof}
From the proof of Lemma 5.1 in  \cite{AgNa22} we get that
\begin{align*}
\ln(y(t)) \leq e^{-ct}\ln(y(0)) + 1 + \ln(R + 1) \leq e^{-ct}\ln(y(0)) + \ln(10(R+1))
\end{align*}
Therefore from this we get the upper bound. To get the lower bound we follow the same proof there to get the inequality
\begin{align*}
\ln(y(t)) \geq e^{ct}\ln(y(0)) - e^{ct}\cbrac{1 + \ln(R + 1)} \geq e^{ct}\ln(y(0)) -e^{ct}\ln(10(R+1))
\end{align*}
From this we get the lower bound. 
\end{proof}

\begin{lemma}\label{lem:phiabest}
For $a, b \in \Dspbar$ we have
\begin{align*}
\int_\Dsp \frac{\abs{a - b}}{\abs{s - a} \abs{s - b}} \diff s \lesssim \phi(\abs{a - b})
\end{align*}
\end{lemma}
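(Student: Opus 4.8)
The plan is to reduce to a translation‑invariant model integral and then decompose the domain of integration around the two singular points $a$ and $b$. Write $d=\abs{a-b}$; if $d=0$ the left‑hand side vanishes, so assume $d>0$. First I would substitute $s=a+t$: since $a\in\Dspbar$ the translated disc $\Dsp-a$ is contained in $B_2(0)$, so with $w:=b-a$ (so $\abs{w}=d\le 2$) and using nonnegativity of the integrand,
\begin{align*}
\int_\Dsp \frac{\abs{a-b}}{\abs{s-a}\abs{s-b}}\diff s \;\le\; d\int_{B_2(0)}\frac{\diff t}{\abs{t}\,\abs{t-w}}\;=:\;d\,I_w .
\end{align*}
Since $\phi(d)=d\max\{-\ln(d),1\}$, it is then enough to show $I_w\lesssim 1$ when $d$ is bounded below and $I_w\lesssim\ln(1/d)$ when $d$ is small.

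In the regime $d\ge\tfrac14$ we have $\phi(d)\ge d\ge\tfrac14$, so it suffices to bound $I_w\lesssim 1$ uniformly for $d\in[\tfrac14,2]$. Here I would split $B_2(0)$ into $\{\abs{t}<d/2\}$, $\{\abs{t-w}<d/2\}$ and the remainder: on the first set $\abs{t-w}\ge d/2$, so the contribution is $\le \frac{2}{d}\int_{\abs{t}<d/2}\abs{t}^{-1}\diff t=2\pi$; the ball around $w$ is symmetric; and on the remainder both $\abs{t}\ge d/2$ and $\abs{t-w}\ge d/2$, so the integrand is $\le 4d^{-2}\le 64$ over a set of area $\le 4\pi$. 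Hence $I_w\lesssim 1$ and $d\,I_w\lesssim 1\lesssim\phi(d)$.

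In the main regime $0<d<\tfrac14$, where $-\ln d>\ln 4>1$ and so $\phi(d)=d\ln(1/d)$, I would cover $B_2(0)$ by four pieces: the two small balls $\{\abs{t}<d/2\}$ and $\{\abs{t-w}<d/2\}$, on which (exactly as above) the integral is $O(1)$; the intermediate shell $\{d/2\le\abs{t}<2d\}$ with the ball around $w$ removed, on which $\abs{t-w}\ge d/2$ and $\int_{d/2\le\abs{t}<2d}\abs{t}^{-1}\diff t=3\pi d$, contributing $\le\frac{2}{d}\cdot 3\pi d=6\pi$; and the far annulus $\{2d\le\abs{t}<2\}$, on which $\abs{t-w}\ge\abs{t}-d\ge\abs{t}/2$, so the integrand is $\le 2\abs{t}^{-2}$ and the contribution is $\le 2\int_{2d\le\abs{t}<2}\abs{t}^{-2}\diff t=4\pi\ln(1/d)$. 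Summing and using $d<\tfrac14$ (so $\ln(1/d)>1$) gives $I_w\lesssim\ln(1/d)$, whence $d\,I_w\lesssim d\ln(1/d)=\phi(d)$; together with the previous regime this proves the lemma.

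I do not expect a genuinely hard step. The only point requiring care is that one must not try to dominate $\abs{t}^{-1}\abs{t-w}^{-1}$ by $\abs{t}^{-2}+\abs{t-w}^{-2}$ globally on $B_2(0)$, since $\abs{t}^{-2}$ is not locally integrable in the plane: the logarithmic factor in $\phi(d)$ is genuine and is produced precisely by the far annulus $2d\le\abs{t}<2$, while near each singular point and on the intermediate shell the contribution is only $O(1)$ — this is the reason for the slightly asymmetric region decomposition.
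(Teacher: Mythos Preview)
Your argument is correct: the translation to $B_2(0)$ and the region decomposition around the two singularities are exactly the standard way to obtain the logarithmic bound, and your case split and arithmetic all check out. The paper itself does not actually prove this lemma; it simply records it as standard and cites \cite{AgNa22} for a proof, so your write-up supplies precisely the details the paper omits.
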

\begin{proof}
This is a standard lemma. See \cite{AgNa22} for a proof. 
\end{proof}

\begin{lemma}\label{lem:Hhalfformula}
Let $f:\Sone \to \Rsp$ be smooth. Then we have the formula
\begin{align}\label{eq:fabsfform}
(f\papabs f)(\alpha) = \frac{1}{8\pi} \int_0^{2\pi} \frac{(f(\ap) - f(\bp))^2}{\sin^2\brac{\frac{\ap - \bp}{2}}} \diff \bp + \frac{1}{2} \papabs(f^2)
\end{align}
In particular 
\begin{align}\label{eq:fhalf2normform}
\norm[\Hhalf]{f}^2 =  \frac{1}{8\pi} \int_0^{2\pi}\int_0^{2\pi} \frac{(f(\ap) - f(\bp))^2}{\sin^2\brac{\frac{\ap - \bp}{2}}} \diff \bp \diff \ap
\end{align}
\end{lemma}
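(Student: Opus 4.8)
The plan is to deduce \eqref{eq:fabsfform} from the two elementary facts $\papabs = \Hil\pap$ and the explicit kernel \eqref{def:Hil} of $\Hil$, and then obtain \eqref{eq:fhalf2normform} by integrating in $\ap$. First I would isolate the difference between the left-hand side of \eqref{eq:fabsfform} and its last term: since $\papabs(f^2) = \Hil\pap(f^2) = 2\,\Hil(ff')$, for each fixed $\ap$ one has
\[
(f\papabs f)(\ap) - \tfrac12\,\papabs(f^2)(\ap) \;=\; f(\ap)\,(\Hil f')(\ap) - \Hil(ff')(\ap) \;=\; \frac{1}{2\pi}\,\mathrm{p.v.}\!\int_0^{2\pi}\cot\!\Big(\frac{\ap-\bp}{2}\Big)\,\big(f(\ap)-f(\bp)\big)\,f'(\bp)\,\diff\bp .
\]
Next I would use the pointwise identity $\big(f(\ap)-f(\bp)\big)f'(\bp) = -\tfrac12\,\pbp\big[(f(\ap)-f(\bp))^2\big]$ to write the integrand as $-\tfrac12\cot\!\big(\tfrac{\ap-\bp}{2}\big)\,\pbp G(\bp)$, where $G(\bp) := (f(\ap)-f(\bp))^2$ is smooth, $2\pi$-periodic, and vanishes to second order at $\bp=\ap$ (one checks $G(\ap)=G'(\ap)=0$ and $G''(\ap)=2f'(\ap)^2$).

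Then I would integrate by parts in $\bp$, using $\pbp\cot\!\big(\tfrac{\ap-\bp}{2}\big) = \dfrac{1}{2\sin^2\!\big(\tfrac{\ap-\bp}{2}\big)}$. The endpoint contributions at $\bp=0$ and $\bp=2\pi$ cancel by periodicity, and — this is the only point requiring care — when one excises a symmetric $\epsilon$-neighbourhood of $\bp=\ap$ before integrating by parts, the boundary terms there are of size $\cot(\epsilon/2)\big[\,G(\ap-\epsilon)+G(\ap+\epsilon)\,\big] = O(\epsilon)$, because $G$ vanishes to second order; hence they drop out as $\epsilon\to0$. Moreover the kernel $\sin^{-2}\!\big(\tfrac{\ap-\bp}{2}\big)G(\bp)$ is bounded near $\bp=\ap$ for the same reason, so the resulting integral is in fact absolutely convergent and the principal value can be removed. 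This gives
\[
(f\papabs f)(\ap) - \tfrac12\,\papabs(f^2)(\ap) \;=\; \frac{1}{8\pi}\int_0^{2\pi}\frac{\big(f(\ap)-f(\bp)\big)^2}{\sin^2\!\big(\tfrac{\ap-\bp}{2}\big)}\,\diff\bp ,
\]
which is precisely \eqref{eq:fabsfform}. I expect this excision bookkeeping through the singularity to be the main (and essentially only) nontrivial step; the rest is routine.

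Finally, to obtain \eqref{eq:fhalf2normform} I would integrate \eqref{eq:fabsfform} over $\ap\in[0,2\pi]$. Using the identity recorded in \secref{sec:notation} together with the reality of $f$, the left side integrates to $\int_0^{2\pi}\fbar\,\papabs f\,\diff\ap = \norm[\Hhalf]{f}^2$, while $\int_0^{2\pi}\papabs(f^2)(\ap)\,\diff\ap = 2\pi\,\widehat{\papabs(f^2)}(0) = 0$ because the Fourier multiplier $\abs{\xi}$ vanishes at $\xi = 0$. What remains is exactly the claimed double integral, completing the proof.
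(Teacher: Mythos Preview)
Your proof is correct and follows essentially the same approach as the paper: both isolate the commutator $[f,\Hil]\pap f = f\papabs f - \tfrac12\papabs(f^2)$ and evaluate it by integration by parts against the explicit kernel of $\Hil$. Your bookkeeping of the $\epsilon$-excision at the singularity is more explicit than the paper's (which simply writes the identities and suppresses the principal-value details), but the argument is otherwise identical.
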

\begin{proof}
As $\papabs = \Hil \pap$ we see that 
\begin{align*}
f \papabs f = f\Hil\pap f = \sqbrac{f,\Hil}\pap f + \Hil(f \pap f) = \sqbrac{f, \Hil} \pap f + \frac{1}{2}\papabs(f^2)
\end{align*}
Now we see that 
\begin{align*}
& (\sqbrac{f, \Hil} \pap f)(\ap) \\
& = \frac{1}{2\pi} \int_0^{2\pi} (f(\ap) - f(\bp))\cot\brac{\frac{\ap - \bp}{2}} f_\beta(\bp) \diff \beta \\
& = - \frac{1}{2\pi} \int_0^{2\pi} (f(\ap) - f(\bp))\cot\brac{\frac{\ap - \bp}{2}} f_\beta(\bp) \diff \beta + \frac{1}{4\pi} \int_0^{2\pi} \frac{(f(\ap) - f(\bp))^2}{\sin^2\brac{\frac{\ap - \bp}{2}}} \diff \bp \\
& = \frac{1}{8\pi} \int_0^{2\pi} \frac{(f(\ap) - f(\bp))^2}{\sin^2\brac{\frac{\ap - \bp}{2}}} \diff \bp
\end{align*}
Hence proved. 
\end{proof}

\begin{lemma}\label{lem:fefHhalf}
Let $f:\Sone \to \Csp$ be bounded. Then $\norm[\Hhalf(\Sone)]{e^f} \lesssim_{\norm[\infty]{f}} \norm[\Hhalf(\Sone)]{f}$. If $f$ is continuous and $e^f \in \Hhalf(\Sone)$, then $f \in \Hhalf(\Sone)$. 
\begin{proof}
Observe that for $z_1, z_2 \in \Csp$ with $\abs{z_1}, \abs{z_2} \leq M$ for some $M>0$ we have $\abs{e^{z_1} - e^{z_2}} \lesssim_M \abs{z_1 - z_2}$. 
From this and \lemref{lem:Hhalfformula} we easily get $\norm[\Hhalf(\Sone)]{e^f} \lesssim_{\norm[\infty]{f}} \norm[\Hhalf(\Sone)]{f}$. Now assume that $f$ is continuous on $\Sone$. This means that $f$ is uniformly continuous. Hence there exists $\delta >0$ such that if $\abs{e^{i\alpha} - e^{i\beta}} < \delta$ then $\abs{f(e^{i\alpha}) - f(e^{i\beta})} < \frac{1}{10}$. Now observe that for $z_1, z_2 \in \Csp$ with $\abs{z_1}, \abs{z_2} \leq M$ for some $M>0$ and $\abs{z_1 - z_2} < \frac{1}{10}$, we have $\abs{e^{z_1} - e^{z_2}} \approx_M \abs{z_1 - z_2}$. 
Therefore we have
\begingroup
\allowdisplaybreaks
\begin{align*}
1 + \norm[\Hhalf]{f}^2 & \approx 1 + \int_0^{2\pi} \int_0^{2\pi} \frac{\abs{f(e^{i\ap}) - f(e^{i\bp})}^2}{\sin^2\brac{\frac{\ap - \bp}{2}}} \diff \bp \diff \ap \\
& \approx_{\norm[\infty]{f}, \delta} 1 + \int_{\abs*{e^{i\ap} - e^{i\bp}  } < \delta } \frac{\abs{f(e^{i\ap}) - f(e^{i\bp})}^2}{\sin^2\brac{\frac{\ap - \bp}{2}}} \diff \bp \diff \ap \\
& \approx_{\norm[\infty]{f}, \delta} 1 + \int_{\abs*{e^{i\ap} - e^{i\bp}  } < \delta } \frac{\abs*[\big]{e^{f(e^{i\ap})} - e^{f(e^{i\bp})}}^2}{\sin^2\brac{\frac{\ap - \bp}{2}}} \diff \bp \diff \ap \\
& \approx_{\norm[\infty]{f}, \delta} 1 + \int_0^{2\pi} \int_0^{2\pi}\frac{\abs*[\big]{e^{f(e^{i\ap})} - e^{f(e^{i\bp})}}^2}{\sin^2\brac{\frac{\ap - \bp}{2}}} \diff \bp \diff \ap \\
& \approx_{\norm[\infty]{f}, \delta} 1 + \norm*[\big][\Hhalf]{e^f}^2
\end{align*}
\endgroup
Hence proved. 
\end{proof}
\end{lemma}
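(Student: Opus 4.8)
The plan is to reduce everything to the Gagliardo-type identity for the $\Hhalf(\Sone)$ seminorm from \lemref{lem:Hhalfformula} (applied to the real and imaginary parts of $f$, so that for a complex-valued $g$ one has $\norm[\Hhalf]{g}^2 \approx \int_0^{2\pi}\int_0^{2\pi} \abs{g(\ap) - g(\bp)}^2 \sin^{-2}\brac{\frac{\ap - \bp}{2}} \diff\bp\diff\ap$), combined with the elementary observation that $z \mapsto e^z$ is Lipschitz on bounded subsets of $\Csp$ and, crucially, bi-Lipschitz on small scales.

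First I would record two pointwise facts. For $z_1, z_2 \in \Csp$ with $\abs{z_1}, \abs{z_2} \le M$, integrating $e^z$ along the segment from $z_2$ to $z_1$ (on which $\abs{e^z} \le e^M$, by convexity of the disc of radius $M$) gives $\abs{e^{z_1} - e^{z_2}} \lesssim_M \abs{z_1 - z_2}$; writing instead $e^{z_1} - e^{z_2} = e^{z_2}(e^{z_1-z_2} - 1)$ and using $\abs{e^w - 1} \gtrsim \abs{w}$ for $\abs{w} < \frac{1}{10}$ together with $\abs{e^{z_2}} \ge e^{-M}$ gives the reverse bound $\abs{e^{z_1} - e^{z_2}} \gtrsim_M \abs{z_1 - z_2}$ under the extra hypothesis $\abs{z_1 - z_2} < \frac{1}{10}$. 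The first assertion of the lemma is then immediate: take $M = \norm[\infty]{f}$, so that $\abs{e^{f(\ap)} - e^{f(\bp)}}^2 \lesssim_{\norm[\infty]{f}} \abs{f(\ap) - f(\bp)}^2$ pointwise a.e., and insert this into the formula of \lemref{lem:Hhalfformula} to get $\norm[\Hhalf]{e^f} \lesssim_{\norm[\infty]{f}} \norm[\Hhalf]{f}$.

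For the converse, assume $f$ is continuous and $e^f \in \Hhalf(\Sone)$. Since $\Sone$ is compact, $f$ is uniformly continuous, so there is $\delta > 0$ with $\abs{e^{i\ap} - e^{i\bp}} < \delta \Rightarrow \abs{f(e^{i\ap}) - f(e^{i\bp})} < \frac{1}{10}$. I would split the Gagliardo integral for $f$ over the near-diagonal region $\{\abs{e^{i\ap} - e^{i\bp}} < \delta\}$ and its complement. On the complement $\sin^2\brac{\frac{\ap-\bp}{2}}$ is bounded below by a constant depending only on $\delta$ while the numerator is at most $4\norm[\infty]{f}^2$, so that piece is $\lesssim_{\norm[\infty]{f}, \delta} 1$. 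On the near-diagonal region the reverse pointwise bound (with $M = \norm[\infty]{f}$) applies, replacing $\abs{f(\ap) - f(\bp)}^2$ by a quantity $\lesssim_{\norm[\infty]{f}} \abs{e^{f(\ap)} - e^{f(\bp)}}^2$, so that piece is $\lesssim_{\norm[\infty]{f}} \norm[\Hhalf]{e^f}^2 < \infty$. Adding the two contributions yields $\norm[\Hhalf]{f} < \infty$.

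The only real subtlety, and the step I expect to be the crux, is the converse direction: one cannot globally invert $w \mapsto e^w$ (the imaginary part of $f$ may wind around), so a naive reversal of the Lipschitz bound fails, and it is exactly the continuity hypothesis that lets one localize to the region where $\abs{f(\ap) - f(\bp)}$ is small and there deploy the local bi-Lipschitz property of $\exp$. Everything else is a routine application of \lemref{lem:Hhalfformula}.
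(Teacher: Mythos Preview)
Your proposal is correct and follows essentially the same argument as the paper: both use the Gagliardo formula from \lemref{lem:Hhalfformula}, the Lipschitz bound $\abs{e^{z_1}-e^{z_2}}\lesssim_M\abs{z_1-z_2}$ for the first assertion, and for the converse split the double integral into a near-diagonal region (where uniform continuity forces $\abs{f(\ap)-f(\bp)}<\tfrac{1}{10}$ and the local bi-Lipschitz bound applies) and a far-diagonal region (bounded trivially). Your explicit remark that continuity is what rules out winding of the imaginary part nicely identifies why the hypothesis is needed.
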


\begin{lemma}\label{lem:gradunotinLp}
Fix $0 < \alpha_0 < 1$ and let $\Omega$ be a bounded simply connected $C^{1, \alpha_0}$ domain with a Jordan boundary such that it is not a $C^{1, \gamma}$ domain for any $\alpha_0< \gamma < 1$. Let $\w \in \Linfty(\Omega)$ satisfy $\w(x) \geq 0$ for all $x \in \Omega$ and $\norm[\Linfty(\Omega)]{\w} > 0$. Then the function $u: \Omega \to \Csp$ given by $u = \grad^\perp \Delta^{-1} \w$ satisfies $u \notin C^\beta(\Omega)$ for all $\alpha_0 < \beta <1$ and $ \grad u  \notin L^p(\Omega) $ for all $p > \frac{2}{1 - \alpha_0}$. 
\end{lemma}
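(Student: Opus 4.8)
The plan is to reduce the statement, via the conformal change of variables used throughout the paper, to an essentially trivial fact about the boundary parametrization $\theta\mapsto\Psi(e^{i\theta})$. Let $\Psi:\Dsp\to\Omega$ be a Riemann map; since $\Omega$ is $C^{1,\alpha_0}$, $\Psi_z$ extends continuously to $\Dspbar$ with $0<c_0\le\abs{\Psi_z}\le c_1$ and $\Psi_z\in C^{\alpha_0}(\Dspbar)$ (Kellogg--Warschawski, exactly as recalled in the proof of \propref{prop:main}). Write $\psi=\Delta^{-1}\w$ for the stream function, so $u=\grad^\perp\psi$, and set $\tilde\psi=\psi\circ\Psi$. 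By conformal invariance of the Laplacian, $\tilde\psi$ is the continuous solution of $\Delta\tilde\psi=\tilde\w$ in $\Dsp$ vanishing on $\Sone$, where $\tilde\w=\abs{\Psi_z}^2(\w\circ\Psi)\in\Linfty(\Dsp)$ satisfies $\tilde\w\ge0$, $\tilde\w\not\equiv0$. Since $\Sone$ is smooth, $L^q$ elliptic regularity gives $\tilde\psi\in W^{2,q}(\Dsp)$ for all $q<\infty$, hence $\tilde\psi\in C^{1,\gamma}(\Dspbar)$ for every $0<\gamma<1$; in particular $\partial_r\tilde\psi|_{\Sone}\in C^\gamma(\Sone)$ for every such $\gamma$. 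Because $\Delta\tilde\psi\ge0$ and $\tilde\w\not\equiv0$, the strong maximum principle gives $\tilde\psi<0$ in $\Dsp$; and Hopf's lemma (most cleanly: choose a ball $B$ with $\int_B\tilde\w>0$, let $\Delta h=\tilde\w\one_B$ in $\Dsp$ with $h|_{\Sone}=0$, so $h$ is harmonic near $\Sone$ and classical Hopf gives $\partial_r h\ge c>0$ there, while $\Delta(\tilde\psi-h)\ge0$ and $(\tilde\psi-h)|_{\Sone}=0$ force $\tilde\psi-h\le0$ and hence $\partial_r(\tilde\psi-h)\ge0$ on $\Sone$) yields $\partial_r\tilde\psi\ge c>0$ on $\Sone$.

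Next I would record the boundary formula for $u$. Since $\Psi$ is holomorphic, the Wirtinger chain rule gives $(\partial_x\psi)\circ\Psi=\tfrac{\partial_y\tilde\psi}{\Psi_z}$ with $\partial_x=\tfrac12(\partial_{x_1}-i\partial_{x_2})$, hence $u\circ\Psi=2i\,\overline{(\partial_x\psi)\circ\Psi}=\tfrac{2i\,\overline{\partial_y\tilde\psi}}{\overline{\Psi_z}}$. On $\Sone$ the tangential derivative of $\tilde\psi$ vanishes, so $\grad_y\tilde\psi(e^{i\theta})=\partial_r\tilde\psi(e^{i\theta})e^{i\theta}$, i.e. $\partial_y\tilde\psi(e^{i\theta})=\tfrac12\partial_r\tilde\psi(e^{i\theta})e^{-i\theta}$, and therefore
\begin{align*}
u(\Psi(e^{i\theta}))=\frac{i\,\partial_r\tilde\psi(e^{i\theta})\,e^{i\theta}}{\overline{\Psi_z(e^{i\theta})}},\qquad\text{equivalently}\qquad\overline{\Psi_z(e^{i\theta})}=\frac{i\,\partial_r\tilde\psi(e^{i\theta})\,e^{i\theta}}{u(\Psi(e^{i\theta}))},
\end{align*}
and $\abs{u(\Psi(e^{i\theta}))}=\partial_r\tilde\psi(e^{i\theta})/\abs{\Psi_z(e^{i\theta})}$ is bounded above and away from zero.

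For the first claim I would argue by contradiction. Suppose $u\in C^\beta(\Omega)$ for some $\alpha_0<\beta<1$; its continuous extension to $\Omegabar$ has the same Hölder seminorm, and since $\Psi$ is Lipschitz the map $\theta\mapsto u(\Psi(e^{i\theta}))$ is $C^\beta$ on $\Sone$. Combining this with $\partial_r\tilde\psi|_{\Sone}\in C^\beta(\Sone)$ and the positive lower bound on $\abs{u\circ\Psi}$, the displayed identity forces $\overline{\Psi_z}|_{\Sone}\in C^\beta(\Sone)$, hence $\Psi_z|_{\Sone}\in C^\beta(\Sone)$. As $\Psi_z$ is a bounded holomorphic function on $\Dsp$ and $0<\beta<1$, it coincides with the Poisson integral of its boundary trace, which lies in $C^\beta(\Dspbar)$ (a classical fact; alternatively one applies this to $\log\Psi_z$, legitimate since $\abs{\Psi_z}\ge c_0$). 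Thus $\Psi_z\in C^\beta(\Dspbar)$ with $\Psi_z\ne0$, so $\theta\mapsto\Psi(e^{i\theta})$ is a $C^{1,\beta}$ parametrization of $\partial\Omega$ with nonvanishing derivative, i.e. $\Omega$ is a $C^{1,\beta}$ domain with $\alpha_0<\beta<1$ — contradicting the hypothesis on $\Omega$. Hence $u\notin C^\beta(\Omega)$ for all $\alpha_0<\beta<1$.

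The $L^p$ claim then follows from the first one by Morrey's inequality. If $\grad u\in L^p(\Omega)$ for some $p>\tfrac{2}{1-\alpha_0}$, then since $u$ is bounded and continuous on $\Omega$ (from $u\circ\Psi=\tfrac{2i\,\overline{\partial_y\tilde\psi}}{\overline{\Psi_z}}$ with $\tilde\psi\in C^1(\Dspbar)$ and $\abs{\Psi_z}\ge c_0$) and $\Omega$ is bounded, $u\in L^p(\Omega)$, so $u\in W^{1,p}(\Omega)$. As $\Omega$ is a bounded Lipschitz domain and $p>2$, Morrey's embedding gives $W^{1,p}(\Omega)\hookrightarrow C^{0,1-2/p}(\Omegabar)$; with $\beta:=1-\tfrac{2}{p}$, the hypothesis $p>\tfrac{2}{1-\alpha_0}$ gives $\alpha_0<\beta<1$, so $u\in C^\beta(\Omega)$ with $\alpha_0<\beta<1$, contradicting the first part. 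Hence $\grad u\notin L^p(\Omega)$ for all $p>\tfrac{2}{1-\alpha_0}$.

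I expect the main obstacle to be the ``converse'' regularity bookkeeping rather than any single hard estimate: one must use the form of Hopf's lemma valid for a strong ($W^{2,q}$, $q>2$) solution with merely $\Linfty$ right-hand side in order to get a \emph{uniform} positive lower bound for $\partial_r\tilde\psi$ on $\Sone$, and one must carry the $C^\beta$-regularity of $\Psi_z$ from the trace on $\Sone$ back to $\Dspbar$ (via harmonic extension) before translating it into $C^{1,\beta}$-regularity of $\partial\Omega$.
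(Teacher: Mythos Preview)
Your proof is correct and follows the same skeleton as the paper: write $u\circ\Psi$ on the boundary as (regular, nonvanishing factor)$/\overline{\Psi_z}$, deduce by contradiction that $u\in C^\beta$ forces $\Psi_z|_{\Sone}\in C^\beta$ and hence $\Omega$ is $C^{1,\beta}$, and finish the $L^p$ claim by Morrey. The only real difference is in how the auxiliary factor is handled. The paper uses its function $\btil$ from \eqref{eq:btil} and invokes \lemref{lem:btil}: part (2) gives log-Lipschitz regularity directly from the explicit Biot--Savart kernel, and the boundary identity \eqref{eq:btilboundary} together with $\w\ge0$, $\w\not\equiv0$ yields $\abs{\btil}>0$ on $\Sone$. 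You instead use the stream function $\tilde\psi$, obtaining $\partial_r\tilde\psi|_{\Sone}\in C^\gamma$ for all $\gamma<1$ from $W^{2,q}$ elliptic regularity and $\partial_r\tilde\psi\ge c>0$ from Hopf's lemma. These are two views of the same object, since on $\Sone$ one has $\btil(e^{i\theta})=i\,\partial_r\tilde\psi(e^{i\theta})\,e^{i\theta}$. The paper's route is shorter in context because \lemref{lem:btil} is already available; your route is more self-contained and yields a bit more regularity than needed, at the price of importing elliptic estimates and Hopf. You also make explicit the Poisson-extension step from $\Psi_z|_{\Sone}\in C^\beta$ to $\Psi_z\in C^\beta(\Dspbar)$, which the paper leaves implicit by working directly with sequences.
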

\begin{proof}
As $\Omega$ is a bounded simply connected domain whose boundary is a Jordan curve, we see that the Riemann map $\Psi:\Dsp \to \Omega$ extends continuously to $\Omegabar$ and with $\Psi$ mapping $\Sone$ to $\partial\Omega$ homeomorphically. As $\Omega$ is a $C^{1,\alpha_0}$ domain, we see that $\Psi_z$ also extends continuously to $\Dspbar$ and $1 \lesssim_\Omega \abs{\Psi_z(z)}\lesssim_\Omega 1$ for all $z \in \Dspbar$. Now let $\alpha_0 < \beta < 1$. As $\Omega$ is not a $C^{1,\beta}$ domain, this means that the mapping $\Psi\vert_{\Sone} : \Sone \to \partial \Omega$ is not $C^{1, \beta}$. This in particular implies that $\Psi_z \notin C^{\beta}(\Omega)$. Hence there exists $y_0 \in \Sone$ and sequences $(a_n)_{n=1}^\infty, (b_n)_{n=1}^\infty$ in $\Dsp$ with $a_n \neq b_n$ for all $n \geq 1$ and $a_n, b_n \to y_0$ such that
\begin{align}\label{eq:Psizanbncontra}
\lim_{n\to \infty} \frac{\abs{\Psi_z(a_n) - \Psi_z(b_n)}}{\abs{a_n - b_n}^\beta} = \infty
\end{align}
From the same way \eqref{eq:ufrombiotSavart} was derived, we see that $u:\Omega \to \Csp$ is given by $u(x) = b(x)\Phizbar(x)$, where $\Phi = \Psi^{-1}$ and $b$ is given by \eqref{eq:b}. Let $\util :\Dsp \to \Csp$ be defined as $\util(y) = u(\Psi(y))$. Therefore we see that 
\begin{align}\label{eq:utilbtilpsizbar}
\util(y) = \frac{\btil(y)}{\Psizbar(y)}
\end{align}
where $\btil$ is given by \eqref{eq:btil}.  From \lemref{lem:btil} part (1) we see that $u$ is bounded. Now assume by contradiction that $u \in C^{\beta}(\Omega)$ (recall that we assumed $\alpha_0 < \beta < 1$ earlier). 
As $\Psi$ is bi-Lipschitz, this implies that $\util \in C^\beta(\Dsp)$. Now from \eqref{eq:btilboundary} and the fact that $\w \geq 0$ and $\norm[\Linfty(\Omega)]{\w} > 0$, we see that $\abs*[\big]{\btil(y)} > 0$ for all $y \in \Sone$. As $\btil$ is log-Lipschitz on $\Dspbar$ from \lemref{lem:btil} part (2), this implies from \eqref{eq:utilbtilpsizbar} that there exists $N >1$ large enough so that for all $n \geq N$ we have 
\begin{align*}
\abs{\Psi_z(a_n) - \Psi_z(b_n)} \leq C\abs{a_n - b_n}^\beta
\end{align*}
for some constant $C>0$. This contradicts \eqref{eq:Psizanbncontra} and hence this proves that $u \notin C^\beta(\Omega)$. Now by Sobolev embedding we therefore also get that  $ \grad u  \notin L^p(\Omega) $ for all $p > \frac{2}{1 - \alpha_0}$. 

\end{proof}


\bibliographystyle{amsplain}
\bibliography{/Users/siddhant/Desktop/Writing/Main.bib}

\providecommand{\bysame}{\leavevmode\hbox to3em{\hrulefill}\thinspace}
\providecommand{\MR}{\relax\ifhmode\unskip\space\fi MR }
\providecommand{\MRhref}[2]{%
  \href{http://www.ams.org/mathscinet-getitem?mr=#1}{#2}
}
\providecommand{\href}[2]{#2}
\begin{thebibliography}{10}

\bibitem{AgNa22}
Siddhant Agrawal and Andrea~R Nahmod, \emph{Uniqueness of the 2d euler equation
  on a corner domain with non-constant vorticity around the corner},
  Nonlinearity \textbf{35} (2022), no.~6, 2767--2808.

\bibitem{AlCrMa19}
Giovanni Alberti, Gianluca Crippa, and Anna~L. Mazzucato, \emph{Loss of
  regularity for the continuity equation with non-{L}ipschitz velocity field},
  Ann. PDE \textbf{5} (2019), no.~1, Paper No. 9, 19. \MR{3933614}

\bibitem{Ba72}
C.~Bardos, \emph{Existence et unicit\'{e} de la solution de l'\'{e}quation
  d'{E}uler en dimension deux}, J. Math. Anal. Appl. \textbf{40} (1972),
  769--790. \MR{333488}

\bibitem{BaDiTe13}
Claude Bardos, Francesco Di~Plinio, and Roger Temam, \emph{The {E}uler
  equations in planar nonsmooth convex domains}, J. Math. Anal. Appl.
  \textbf{407} (2013), no.~1, 69--89. \MR{3063105}

\bibitem{BrMu20}
Alberto Bressan and Ryan Murray, \emph{On self-similar solutions to the
  incompressible {E}uler equations}, J. Differential Equations \textbf{269}
  (2020), no.~6, 5142--5203. \MR{4104468}

\bibitem{BrSh21}
Alberto Bressan and Wen Shen, \emph{A posteriori error estimates for
  self-similar solutions to the {E}uler equations}, Discrete Contin. Dyn. Syst.
  \textbf{41} (2021), no.~1, 113--130. \MR{4182316}

\bibitem{DiTe15}
Francesco Di~Plinio and Roger Temam, \emph{Grisvard's shift theorem near
  {$L^\infty$} and {Y}udovich theory on polygonal domains}, SIAM J. Math. Anal.
  \textbf{47} (2015), no.~1, 159--178. \MR{3296605}

\bibitem{DiMa87}
Ronald~J. DiPerna and Andrew~J. Majda, \emph{Concentrations in regularizations
  for {$2$}-{D} incompressible flow}, Comm. Pure Appl. Math. \textbf{40}
  (1987), no.~3, 301--345. \MR{882068}

\bibitem{Du70}
Peter~L. Duren, \emph{Theory of {$H\sp{p}$} spaces.}, Academic Press, New
  York-London,, 1970. \MR{268655}

\bibitem{GeLa13}
David G\'{e}rard-Varet and Christophe Lacave, \emph{The two-dimensional {E}uler
  equations on singular domains}, Arch. Ration. Mech. Anal. \textbf{209}
  (2013), no.~1, 131--170. \MR{3054600}

\bibitem{GeLa15}
\bysame, \emph{The two dimensional {E}uler equations on singular exterior
  domains}, Arch. Ration. Mech. Anal. \textbf{218} (2015), no.~3, 1609--1631.
  \MR{3401016}

\bibitem{HaZl21}
Zonglin Han and Andrej Zlato\v{s}, \emph{Euler equations on general planar
  domains}, Ann. PDE \textbf{7} (2021), no.~2, Paper No. 20, 31. \MR{4304313}

\bibitem{HaZl22}
\bysame, \emph{Uniqueness of positive vorticity solutions to the 2d euler
  equations on singular domains}, Preprint (2022), arXiv:2206.01405.

\bibitem{Ho33}
Ernst H\"{o}lder, \emph{\"{U}ber die unbeschr\"{a}nkte {F}ortsetzbarkeit einer
  stetigen ebenen {B}ewegung in einer unbegrenzten inkompressiblen
  {F}l\"{u}ssigkeit}, Math. Z. \textbf{37} (1933), no.~1, 727--738.
  \MR{1545431}

\bibitem{Yu63}
V.~I. Judovi\v{c}, \emph{Non-stationary flows of an ideal incompressible
  fluid}, \v{Z}. Vy\v{c}isl. Mat. i Mat. Fiz. \textbf{3} (1963), 1032--1066.
  \MR{0158189}

\bibitem{Ki83}
Keisuke Kikuchi, \emph{Exterior problem for the two-dimensional {E}uler
  equation}, J. Fac. Sci. Univ. Tokyo Sect. IA Math. \textbf{30} (1983), no.~1,
  63--92. \MR{700596}

\bibitem{KiZl15}
Alexander Kiselev and Andrej Zlato\v{s}, \emph{Blow up for the 2{D} {E}uler
  equation on some bounded domains}, J. Differential Equations \textbf{259}
  (2015), no.~7, 3490--3494. \MR{3360679}

\bibitem{La15}
Christophe Lacave, \emph{Uniqueness for two-dimensional incompressible ideal
  flow on singular domains}, SIAM J. Math. Anal. \textbf{47} (2015), no.~2,
  1615--1664. \MR{3341132}

\bibitem{LaMiWa14}
Christophe Lacave, Evelyne Miot, and Chao Wang, \emph{Uniqueness for the
  two-dimensional {E}uler equations on domains with corners}, Indiana Univ.
  Math. J. \textbf{63} (2014), no.~6, 1725--1756. \MR{3298720}

\bibitem{LaZl19}
Christophe Lacave and Andrej Zlato\v{s}, \emph{The {E}uler equations in planar
  domains with corners}, Arch. Ration. Mech. Anal. \textbf{234} (2019), no.~1,
  57--79. \MR{3981393}

\bibitem{MaPu94}
Carlo Marchioro and Mario Pulvirenti, \emph{Mathematical theory of
  incompressible nonviscous fluids}, Applied Mathematical Sciences, vol.~96,
  Springer-Verlag, New York, 1994. \MR{1245492}

\bibitem{Mc67}
F.~J. McGrath, \emph{Nonstationary plane flow of viscous and ideal fluids},
  Arch. Rational Mech. Anal. \textbf{27} (1967), 329--348. \MR{221818}

\bibitem{MiSi15}
Petru Mironescu and Winfried Sickel, \emph{A {S}obolev non embedding}, Atti
  Accad. Naz. Lincei Rend. Lincei Mat. Appl. \textbf{26} (2015), no.~3,
  291--298. \MR{3357858}

\bibitem{MuSc13}
Camil Muscalu and Wilhelm Schlag, \emph{Classical and multilinear harmonic
  analysis. {V}ol. {I}}, Cambridge Studies in Advanced Mathematics, vol. 137,
  Cambridge University Press, Cambridge, 2013. \MR{3052498}

\bibitem{Ne75}
Zeev Nehari, \emph{Conformal mapping.}, Dover Publications, Inc., New York,,,
  1975, Reprinting of the 1952 edition. \MR{377031}

\bibitem{Po92}
Ch. Pommerenke, \emph{Boundary behaviour of conformal maps}, Grundlehren der
  Mathematischen Wissenschaften [Fundamental Principles of Mathematical
  Sciences], vol. 299, Springer-Verlag, Berlin, 1992. \MR{1217706}

\bibitem{Ta00}
Michael~E. Taylor, \emph{Incompressible fluid flows on rough domains},
  Semigroups of operators: theory and applications ({N}ewport {B}each, {CA},
  1998), Progr. Nonlinear Differential Equations Appl., vol.~42,
  Birkh\"{a}user, Basel, 2000, pp.~320--334. \MR{1788895}

\bibitem{Te75}
Roger Temam, \emph{On the {E}uler equations of incompressible perfect fluids},
  J. Functional Analysis \textbf{20} (1975), no.~1, 32--43. \MR{0430568}

\bibitem{Vi18a}
Misha Vishik, \emph{Instability and non-uniqueness in the cauchy problem for
  the euler equations of an ideal incompressible fluid. part {I}}, Preprint
  (2018), arXiv:1805.09426.

\bibitem{Vi18b}
\bysame, \emph{Instability and non-uniqueness in the cauchy problem for the
  euler equations of an ideal incompressible fluid. part {II}}, Preprint
  (2018), arXiv:1805.09440.

\bibitem{Wo33}
W.~Wolibner, \emph{Un theor\`eme sur l'existence du mouvement plan d'un fluide
  parfait, homog\`ene, incompressible, pendant un temps infiniment long}, Math.
  Z. \textbf{37} (1933), no.~1, 698--726. \MR{1545430}

\end{thebibliography}

\end{document}